\numberwithin{equation}{section}
\newcommand{\stirling}[2]{\genfrac{[}{]}{0pt}{}{#1}{#2}}
\newcommand{\stirlingsec}[2]{\genfrac{\{}{\}}{0pt}{}{#1}{#2}}
\newcommand{\stirlingb}[2]{B\hspace*{-0.2mm}\genfrac{[}{]}{0pt}{}{#1}{#2}}
\newcommand{\stirlingsecb}[2]{B\hspace*{-0.2mm}\genfrac{\{}{\}}{0pt}{}{#1}{#2}}
\newcommand{\eulerian}[2]{\genfrac{\langle}{\rangle}{0pt}{}{#1}{#2}}
\newcommand{\eulerianb}[2]{B\hspace*{-0.2mm}\genfrac{\langle}{\rangle}{0pt}{}{#1}{#2}}
\newcommand{\ii}{{\rm{i}}}
\newcommand{\bB}{\mathbb{B}}
\newcommand{\bS}{\mathbb{S}}
\newcommand{\bJ}{\mathbb{J}}
\newcommand{\cC}{\mathcal{C}}
\newcommand{\cD}{\mathcal{D}}
\newcommand{\cQ}{\mathcal{Q}}
\newcommand{\cP}{\mathcal{P}}
\newcommand{\Bin}{\text{\rm Bin}}
\DeclareMathOperator{\relint}{relint}
\def\dint{\textup{d}}
\newcommand{\E}{\mathbb E}
\newcommand{\R}{\mathbb{R}}
\newcommand{\N}{\mathbb{N}}
\newcommand{\C}{\mathbb{C}}
\newcommand{\Z}{\mathbb{Z}}
\renewcommand{\P}{\mathbb{P}}
\renewcommand{\Re}{\operatorname{Re}}
\renewcommand{\Im}{\operatorname{Im}}
\newcommand{\conv}{\mathop{\mathrm{conv}}\nolimits}
\newcommand{\pos}{\mathop{\mathrm{pos}}\nolimits}
\newcommand{\lin}{\mathop{\mathrm{lin}}\nolimits}
\newcommand{\eps}{\varepsilon}
\newcommand{\eqdistr}{\stackrel{d}{=}}
\newcommand{\bsl}{\backslash}
\newcommand{\ind}{\mathbbm{1}}
\newcommand{\dd}{{\rm d}}
\newcommand{\eee}{{\rm e}}
\theoremstyle{plain}
\newtheorem{theorem}{Theorem}[section]
\newtheorem{lemma}[theorem]{Lemma}
\newtheorem{corollary}[theorem]{Corollary}
\theoremstyle{definition}
\newtheorem{example}[theorem]{Example}
\theoremstyle{remark}
\newtheorem{remark}[theorem]{Remark}
\begin{document}

\author{Zakhar Kabluchko}
\address{Zakhar Kabluchko: Institut f\"ur Mathematische Stochastik,
Universit\"at M\"unster,
Orl\'eans-Ring 10,
48149 M\"unster, Germany}
\email{zakhar.kabluchko@uni-muenster.de}

\author{Hugo Panzo}
\address{Hugo Panzo: Department of Mathematics and Statistics, Saint Louis University, St. Louis, USA}
\email{hugo.panzo@slu.edu}

\title[A refinement of the Sylvester problem]{A refinement of the Sylvester problem: Probabilities of combinatorial types}
%Probabilities of combinatorial types for $d+2$ points

\keywords{Sylvester problem, beta distribution, beta prime distribution, normal distribution, random polytope, random simplex, regular simplex, internal angle, random walk, random bridge, convex hull, positive hull, Eulerian numbers and their $B$-analogues, polytopes with $d+2$ vertices, Youden's demon problem, order statistics, $f$-vector}

\subjclass[2020]{Primary: 60D05, 52A22; Secondary: 52B11, 52B35, 52B05, 62G30, 60C05}

\begin{abstract}
Let $X_1,\ldots, X_{d+2}$ be random points in $\R^d$. The classical Sylvester problem asks to determine the probability that the convex hull of these points, denoted by $P:= [X_1,\ldots, X_{d+2}]$, is a simplex. In the present paper, we study a refined version of this problem which asks to determine the probability that $P$ has a given combinatorial type. It is known that there are $\lfloor d/2\rfloor+1$ possible combinatorial types of simplicial $d$-dimensional polytopes with at most $d+2$ vertices. These types are denoted by $T_0^d, T_1^d, \ldots, T_{\lfloor d/2 \rfloor}^d$, where $T_0^d$ is a simplex with $d+1$ vertices, while the remaining types have exactly $d+2$ vertices. Our aim is thus to compute the probability
$$
p_{d,m} := \P[P \text{ is of type } T_{m}^d], \qquad m\in \{0,1,\ldots, \lfloor d/2 \rfloor\}.
$$
The classical Sylvester problem corresponds to the case $m=0$. We shall compute $p_{d,m}$ for all $m$ in the following cases: (a) $X_1,\ldots, X_{d+2}$ are i.i.d.\ normal; (b)  $X_1,\ldots, X_{d+2}$ follow a $d$-dimensional beta or beta prime distribution, which includes the uniform distribution on the ball or on the sphere as special cases; (c) $X_1,\ldots, X_{d+2}$ form a random walk with  exchangeable increments. As a by-product of case (a) we  recover a recent solution to Youden's demon problem which asks to determine the probability that, in a one-dimensional  i.i.d.\ normal sample $\xi_1,\ldots, \xi_n$, the empirical mean $\frac 1n (\xi_1 + \ldots + \xi_n)$ lies between the $k$-th and the $(k+1)$-st order statistics. We also consider the conic (or spherical) version of the refined Sylvester problem and solve it in several special cases.
\end{abstract}

\maketitle
%\tableofcontents

\section{Introduction}
\subsection{Classical Sylvester's problem and its refinement}
Consider $n\geq d+1$ random points $X_1,\ldots, X_{n}$ in $\R^d$ with some specified joint distribution.   The classical Sylvester problem asks to determine the probability that these points are in convex position meaning that their convex hull, denoted by $P:= [X_1,\ldots, X_{n}]$, is a polytope with $n$ vertices. This problem has been extensively  studied for points that are i.i.d.\ and  uniformly distributed in a convex set; see for example~\cite{BaranyBullSurvey,CalkaSurvey,hug_rev,morin_n_points_convex_position_regulag_k_gon,reitzner_random_polys_survey},  \cite[pp.\ 63--65]{lS76}, \cite{schneider_discrete_aspects_stoch_geom}, \cite[Chapter~8]{schneider_weil_book}, \cite[Chapter 5]{Solomon1975}.

To exclude degenerate cases, we assume that the points $X_1,\ldots, X_n$ are in general  affine position a.s.\ meaning that no $d+1$ points  from this collection lie on a common affine hyperplane. Since for $n=d+1$ points, $P$ is a.s.\ a $d$-dimensional simplex, the simplest non-trivial case of the Sylvester problem is when $n=d+2$. For example, for i.i.d.\ points uniformly distributed in the unit ball of $\R^d$ (or, more generally, in any $d$-dimensional ellipsoid), Kingman~\cite{kingman_secants} proved that
\begin{equation}\label{eq:kingman_sylvester_simplex}
\P[X_1,\ldots, X_{d+2} \text{ are in convex position}] = 1 -  \frac{d+2}{2^d} \cdot  \binom{d+1}{(d+1)/2}^{d+1} \cdot  \binom{(d+1)^2}{(d+1)^2/2}^{-1}.
\end{equation}
In general, for $n=d+2$ points there are two possibilities (excluding events of zero probability):
\begin{itemize}
\item [(a)] either the points $X_1,\ldots, X_{d+2}$ are in convex position
\item [(b)] or one of the points falls into the convex hull of the remaining $d+1$ points (which is a $d$-dimensional simplex).
\end{itemize}
In case (a), $P$ is a $d$-dimensional polytope with $d+2$ vertices. Moreover, by the assumption of general affine position, $P$ is a \emph{simplicial} polytope (meaning that all of its facets and hence all faces are simplices).
In case (b), $P$ is a simplex with $d+1$ vertices. The Sylvester problem asks to determine the probabilities of these two cases.
While all simplices with $d+1$ vertices are combinatorially equivalent, it is known~\cite[Section~6.1]{gruenbaum_book} that there exist $\lfloor d/2 \rfloor$ different combinatorial types $T_1^d, \ldots, T_{\lfloor d/2 \rfloor}^d$ of simplicial $d$-dimensional polytopes with $d+2$ vertices. More details on these combinatorial types will be given below; for now let us only mention that type $T_m^d$ is characterized by the property that it has $(m+1)(d+1-m)$ facets.
For example, for $d=3$ there exists just one combinatorial type of simplicial polytopes with $5$ vertices  --- the union of two tetrahedra sharing a common facet, but already for $d=4$ there exist two different combinatorial types of simplicial polytopes  with $6$ vertices. It will be convenient to introduce also type $T_0^d$ by defining it to be a $d$-dimensional simplex.
It is therefore natural to subdivide  Case~(b) into $\lfloor d/2\rfloor$ subcases, which leads to the following
\begin{quote}
\textbf{Refined Sylvester problem}: For every $m\in \{0,1,\ldots, \lfloor d/2 \rfloor\}$ determine the probability that $[X_1,\ldots, X_{d+2}]$ has combinatorial type $T_m^d$.
\end{quote}

%More generally, for any number of points $n\geq d+2$ one may ask what is the probability that $[X_1,\ldots, X_n]$ has a given combinatorial %type.
%\begin{quote}
%For a given combinatorial type of simplicial polytopes with $\leq n$ vertices, determine the probability that $[X_1,\ldots, X_n]$ has this %type.
%\end{quote}
%The number $C_{n,d}$ of possible combinatorial types:: For a review of known results see M. Firsching:  Realizability and inscribability for simplicial polytopes via nonlinear optimization

%This problem is too difficult since even the number of possible combinatorial types of  $C_{n,d}$ is known only in a few special cases.  Clearly, any two $d$-dimensional polytopes with $d+1$ vertices (i.e., any two simplices) are combinatorially equivalent. In the present paper we shall solve this problem in the simplest nontrivial case when $n=d+2$.

%of simplicial $d$-polytopes with $n$ vertices is finite and it is natural to study the following refinement of the Sylvester problem:
%

%??????
%By definition, two polytopes $P$ and $Q$ have the same \emph{combinatorial type} or are \textit{combinatorially equivalent} if there is a %bijection $\varphi: \cF(P) \to \cF(Q)$ between the set of faces of $P$ and the set of faces of $Q$  such that for every faces $F_1,F_2\in %\cF(P)$,  we have $F_1\subseteq F_2$ if and only if $\varphi(F_1) \subseteq \varphi(F_2)$. Combinatorially equivalent polytopes have equal %dimension and equal $f$-vectors.

\subsection{Summary of results}
We shall explicitly solve the refined Sylvester problem in the following settings:
\begin{itemize}
\item $X_1,\ldots, X_{d+2}$ are i.i.d.\ standard Gaussian in $\R^d$; see Section~\ref{subsec:sylvester_refined_gaussian}.
\item $X_1,\ldots, X_{d+2}$ are i.i.d.\ with density proportional to $(1-\|x\|^2)^\beta$ on the unit ball in $\R^d$, where $\beta >-1$ is a parameter; see Section~\ref{subsec:beta_type_sylvester_refined}.
\item $X_1,\ldots, X_{d+2}$ are i.i.d.\ uniformly distributed on the unit sphere in $\R^d$; see Section~\ref{subsec:beta_type_sylvester_refined}.
\item $X_1,\ldots, X_{d+2}$ are i.i.d.\ with density proportional to $(1+\|x\|^2)^{-\beta}$ on  $\R^d$, where $\beta > \frac{d}{2} + \frac 1 {2(d+2)}$ is a parameter; see Section~\ref{subsec:beta_type_sylvester_refined}.
\item $X_1=0$, $X_1 = \xi_1 + \xi_2, \ldots, X_{d+2}= \xi_1+\ldots+\xi_{d+2}$ are points visited by a $d$-dimensional random walk with exchangeable increments $\xi_1,\ldots, \xi_{d+2}$; see Section~\ref{subsec:convex_hulls_RW_sylvester_refined}.
\end{itemize}

A recent result of~\citet{frick_newman_pegden} establishes the equivalence between the refined Sylvester problem for $n=d+2$ i.i.d.\ Gaussian points in $\R^d$ and the following Youden's problem: determine the probability that, in an i.i.d.\ standard normal sample of size $n$, the empirical mean is located between the $i$-th and the $(i+1)$-st order statistics. We shall use this equivalence to recover Kuchelmeister's recent solution~\cite{kuchelmeister_youdens_demon} to Youden's problem; see Section~\ref{subsec:youden_problem}.

The problem of determining the probabilities of combinatorial types of random polytopes of the form
$[X_1,\ldots,X_n]\subseteq\R^d$, for arbitrary $n\ge d+1$, has been considered in various sources; see, e.g.,  \cite{BokowskiRichterGebertSchindler1992OrderTypes,HenkRichterGebertZiegler1997BasicProperties,vershik_sporyshev_asymptotic_faces_random_polyhedra1992}.  In particular, it has been  conjectured~\cite{HenkRichterGebertZiegler1997BasicProperties} that the most probable combinatorial type is the cyclic polytope.  In the present paper, we focus on the case $n=d+2$, for which a simple description of the possible combinatorial types is available.
%Further references: \cite[Section~5.5]{BokowskiSturmfels1989ComputationalSyntheticGeometry},
%\cite{BokowskiRichterGebertSchindler1992OrderTypes,donoho_tanner2,donoho_tanner_neighborliness,vershik_sporyshev}.
%This problem is too difficult since even the number of possible combinatorial types is not known for $n\geq d+4$.

%{\color{red} ZK: The chapter of Henk, Richter-Gebbert, Ziegler, contains the following statement: Some computer experiments with random polytopes suggest that 1) one obtains a neighborly polytope with high probability (which increases
%rapidly with the dimension of the space), 2) the most probable combinatorial type is a cyclic polytope,
%3)  but still this probability of a cyclic polytope tends to zero.
%However, none of this has been proved. See Bokowski and Sturmfels [BS89, p.
%101], Bokowski, Richter-Gebert, and Schindler [BRS92], Vershik and Spory-
%chev [VS92], and Donoho and Tanner [DT09]. }

We shall also consider a conical or spherical analogue of the Sylvester problem in which $X_1,\ldots, X_{d+2}$ are random vectors in $\R^{d+1}$ in general linear position. Their positive hull, denoted by
$$
C = \pos(X_1,\ldots,X_n) := \left\{\lambda_1 X_1 + \ldots + \lambda_n X_n: \lambda_1,\ldots,\lambda_n\geq 0\right\}
$$
is a polyhedral cone in $\R^d$. Its intersection with the unit sphere, denoted by $Q:=C\cap \bS^d$, is a spherical polytope with at most $d+2$ vertices. The possible combinatorial types are now $T_{m}^d$ with $m\in\{-1,0,\ldots, \lfloor d/2\rfloor\}$, where type $T_{-1}^d$ is the whole sphere, type $T_0^d$ is a spherical simplex, and the remaining types are $d$-dimensional spherical polytopes with $d+2$ vertices.
The refined spherical Sylvester problem asks to determine the probabilities of these types. We shall solve it in the following cases:
\begin{itemize}
\item $X_1,\ldots, X_{d+2}$ random points in $\R^{d+1}$ are such that $(\pm X_1,\ldots, \pm X_{d+2})$ has the same joint distribution as $(X_1,\ldots, X_{d+2})$ for every choice of signs; see Section~\ref{subsec:wendel_donoho_tanner}.  (Example: $X_1,\ldots, X_{d+2}$ are i.i.d.\ with the uniform distribution on the $d$-dimensional unit sphere).
\item $X_1,\ldots, X_{d+2}$ are i.i.d.\ uniform on the upper half-sphere in $\R^{d+1}$; see Example~\ref{ex:refined_sylvester_cauchy_distr}.
\item $X_1,\ldots, X_{d+2}$ are points visited by a random walk or bridge in $\R^{d+1}$; see Section~\ref{subsec:walk_bridges_positive_hull_refined_sylvester}.
\end{itemize}

\section{Preliminaries}
\subsection{Simplicial \texorpdfstring{$d$}{d}-polytopes with \texorpdfstring{$d+2$}{d+2} vertices}
Following the book of~\citet[Section~6.1]{gruenbaum_book}, we recall some facts about simplicial polytopes in $\R^d$ with $d+2$ vertices. (Another useful reference is~\cite{ewald_book_combinatorial_convexity_alg_geom}).
%Our aim is to classify simplicial polytopes in $\R^d$ with $d+2$ vertices up to combinatorial equivalence.
%We shall show that  there are $\lfloor d/2 \rfloor$ different combinatorial types of such polytopes denoted by $T_1^d, \ldots, T_{\lfloor d/2 \rfloor}^d$.
Consider a set $I$ of $d+2$ points $x_1,\ldots, x_{d+2}$ in $\R^d$ that are in general affine position meaning that every affine hyperplane contains at most $d$ points from this set. The convex hull of these points, denoted by $P:= [x_1,\ldots, x_{d+2}]$, is either a simplex with $d+1$ vertices or a simplicial polytope with $d+2$ vertices. %(A polytope is simplicial if all of its faces are simplices).

\begin{theorem}[See Section~6.1 in~\cite{gruenbaum_book}]\label{theo:comb_types_polys_d+2_vert}
There exist exactly $\lfloor d/2 \rfloor+1$ different combinatorial types of $d$-dimensional simplicial polytopes with at most $d+2$ vertices, denoted by  $T_m^d$ with $m\in \{0, 1,\ldots, \lfloor d/2 \rfloor\}$. By definition, $T_0^d$ is a simplex with $d+1$ vertices, while the types with $m \geq 1$ have exactly $d+2$ vertices. A $d$-dimensional polytope $T$ is of $m$-th type if and only if it can be represented as $T= \conv (V\cup W)$, where  $V = \{v_0,\ldots, v_m\}$ and $W= \{w_0,\ldots, w_{d-m}\}$ are disjoint sets such that the simplices $\conv V$ and $\conv W$ intersect at a single point  which belongs to the relative interior of each of the simplices. If $m\geq 1$, then the set of vertices of $T$ is $V\cup W$. If $m=0$, then $T$ is a $d$-dimensional simplex with vertices $\{w_0,\ldots, w_d\}$,  $V=\{v_0\}$ and  $v_0 \in \relint T$.
\end{theorem}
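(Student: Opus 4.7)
The plan is to derive the classification directly from Radon's theorem. Since $x_1,\ldots,x_{d+2}$ lie in general affine position in $\R^d$, the space of affine dependences $\sum_{i=1}^{d+2}\lambda_i x_i = 0$ with $\sum_{i=1}^{d+2}\lambda_i = 0$ is one-dimensional, and every non-trivial dependence has all $\lambda_i\neq 0$ (a zero coefficient would make the remaining $d+1$ points affinely dependent). Partitioning indices by the sign of $\lambda_i$ then yields the unique Radon partition $(V,W)$, and the normalized positive and negative combinations agree at a single common point $p$ that lies in the relative interior of both simplices $\conv V$ and $\conv W$. Writing $|V|=m+1\le |W|$ forces $m\in\{0,1,\ldots,\lfloor d/2\rfloor\}$; for $m=0$ the singleton $V=\{v_0\}$ lies in $\relint\conv W$, so $P=\conv W$ is the $d$-simplex $T_0^d$, matching the definition.

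For $m\ge 1$ I would pin down the combinatorial type of $P=\conv(V\cup W)$ by proving that $\partial P$, as a simplicial complex, is the simplicial join $\partial(\conv V) * \partial(\conv W)$. Equivalently, the minimal non-faces of $P$ are exactly $V$ and $W$: a subset $F\subsetneq V\cup W$ is the vertex set of a face of $P$ if and only if it misses at least one vertex from $V$ and at least one from $W$. That $V$ and $W$ themselves are non-faces is immediate, for any supporting hyperplane containing $V$ would contain $p\in\relint\conv V$ and hence meet $\relint\conv W$, contradicting its supporting role. For the converse, given admissible $F$ with $v_*\in V\setminus F$ and $w_*\in W\setminus F$, I would produce a linear functional that vanishes on $\conv F$, is negative on $(V\cup W)\setminus F$, and reaches its maximum on $P$ precisely along $\conv F$. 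This face-lattice description shows the combinatorial type depends only on the pair $(|V|,|W|)$, hence only on $m$, and the facet count $(m+1)(d-m+1)$ — obtained by dropping one vertex each from $V$ and $W$ — is strictly increasing in $m$ on $\{0,\ldots,\lfloor d/2\rfloor\}$, so distinct $m$ yield distinct types.

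The main obstacle is precisely this converse direction: constructing, for each admissible $F$, an explicit supporting hyperplane whose contact set with $P$ is exactly $\conv F$. My preferred strategy is to exploit the one-dimensional family of Radon coefficients $(\lambda_i)$ as a sign-bookkeeping device. After fixing an affine functional to vanish on $F$ and to take a prescribed positive value at $v_*$, its value at $p$ is determined; combining this with the two expressions of $p$ as convex combinations over $V$ and over $W$ forces the signs of the functional at every remaining vertex to be negative, while the $\lambda_i\neq 0$ condition prevents accidental vanishing. Finally, the realisability of each $T_m^d$ for the claimed range of $m$ follows by placing $V$ and $W$ in complementary affine subspaces intersecting at any single interior point of each, which produces $d+2$ points in general affine position with Radon partition of the desired size.
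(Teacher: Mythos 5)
The paper does not prove this statement at all: it is imported verbatim from Gr\"unbaum, Section~6.1, where the classification is obtained via the ``standard form''/Gale-diagram analysis of polytopes with few vertices. So there is no in-paper proof to compare against, and your Radon-partition route is a legitimate, more elementary reconstruction of the same classification. Its skeleton is sound: the one-dimensional dependence space gives the unique partition $(V,W)$ with $\conv V\cap\conv W=\{p\}$ and $p$ in both relative interiors; the face lattice is determined by declaring $V$ and $W$ to be the minimal non-faces; the facet counts $(m+1)(d-m+1)$ are strictly increasing on $\{0,\ldots,\lfloor d/2\rfloor\}$, separating the types.

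Two places need repair. First, you assume the $d+2$ vertices are in general affine position, but the theorem is about arbitrary simplicial $d$-polytopes with $d+2$ vertices, so this must be derived, not assumed. It is true and easy: if $d+1$ vertices lay on a hyperplane $H$, their affine hull would be all of $H$ (else $\dim P<d$), $P$ would lie in one closed halfspace of $H$, and $H\cap P$ would be a facet with at least $d+1$ vertices, contradicting simpliciality. Without this step your argument only classifies convex hulls of generic configurations, not all simplicial polytopes. Second, your supporting-functional construction is garbled in the signs and only really works for facets. For $F=(V\cup W)\bsl\{v_*,w_*\}$ the affine functional $h$ vanishing on $\aff F$ is unique up to scale, and writing $p=\sum_{v\in V}\alpha_v v=\sum_{w\in W}\beta_w w$ with positive coefficients gives $h(p)=\alpha_{v_*}h(v_*)=\beta_{w_*}h(w_*)$, so $h(v_*)$ and $h(w_*)$ are nonzero (by general position) and of the same sign; after flipping $h$ this exhibits $\conv F$ as a facet. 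But for a smaller admissible $F$ the space of functionals vanishing on $F$ has dimension $\geq 2$ and ``its value at $p$ is determined'' is false; you should instead reduce to the facet case by observing that every admissible $F$ is contained in some $(V\cup W)\bsl\{v_*,w_*\}$ and that all subsets of a simplex facet are faces. Finally, in the realizability step ``any'' placement of $V$ and $W$ in complementary affine subspaces need not yield points in general position (e.g.\ an edge of $\conv V$ can accidentally be coplanar with an edge of $\conv W$); a generic placement, or an appeal to the fact that the face-lattice computation only uses the Radon structure, is needed there.
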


\begin{example}
For $d=3$, every simplicial polytope $T$ with $5$ vertices  can be represented as the union of two tetrahedra $[v_0, w_0, w_1,w_2]$ and $[v_1,w_0,w_1,w_2]$ sharing a common facet $[w_0,w_1,w_2]$. Its type is $T_1^3$ and $V= \{v_0,v_1\}$, $W= \{w_0,w_1,w_2\}$.
\end{example}

The above classification is closely related to a classical theorem of Radon which states that for every set $I= \{x_1,\ldots, x_{d+2}\}$ of $d+2$ points  in $\R^d$ it is possible to find a disjoint decomposition $I= V\cup W$ into non-empty subsets $V$ and $W$ such that $\conv V \cap \conv W \neq \varnothing$; see~\cite[p.~17]{barvinok_book} or~\cite[Theorem~2.3.6]{gruenbaum_book}. Such decomposition is called a Radon partition of $I$; see~\cite{eckhoff_radon_revisited} and~\cite{eckhoff_helly_radon_caratheodory} for reviews on this topic. If we additionally assume that the points in $I$ are in general affine position, the Radon partition is unique (up to swapping the sets) and $\conv V \cap \conv W = \{O\}$ consists of a single point $O$  such that  $O \in \relint \conv V$ and $O\in \relint \conv W$. If we write $\#V = m+1$ for the cardinality of $V$, then, $\# W = d+1-m$ is the cardinality of $W$. Without loss of generality, let $\# V \leq \#W$, which restricts the set of possible values of $m$ to $\{0,1,\ldots, \lfloor d/2 \rfloor\}$. As it turns out, the number $m\in \{0, 1,\ldots, \lfloor d/2 \rfloor\}$ determines the combinatorial type of the polytope $[x_1,\ldots, x_{d+2}]$. In fact, knowing the decomposition $I= V\cup W$ it is possible to determine the faces of $I$ or, more precisely, those sets $A\subseteq I$ for which $\conv A$ is a face of $P$. This is stated in the next theorem.

\begin{theorem}[See Section~6.1 in~\cite{gruenbaum_book}]\label{theo:facets_polys_d+2_vert}
Let $T = \conv (V\cup W)$ be a polytope of combinatorial type $T_{m}^d$, as in Theorem~\ref{theo:comb_types_polys_d+2_vert},  with $m \in \{0,1,\ldots,\lfloor d/2 \rfloor\}$.
\begin{itemize}
\item[(a)] Each facet of $T$ has the form $F = \conv((V\bsl\{v_i\}) \cup (W\bsl \{w_j\}))$ for some vertices $v_i\in V$ and $w_j\in W$.  Conversely, every set $F$ of the this form is a facet of $T$. Consequently, the number of facets of $T$ is
$$
f_{d-1}(T) = \# V \cdot \# W = (m+1) (d-m+1).
$$
\item[(b)] The collection of faces of $T$ (excluding $T$ istelf) coincides with the collection of all simplices  $\conv U$, where $U\subseteq V\cup W$ is such that neither $V$ nor $W$ is a subset of $U$. Consequently, for every $k\in \{0,\ldots, d-1\}$, the number of $k$-dimensional faces of $T$ is
$$
f_k (T) = \binom{d+2}{k+1} - \binom{d-m+1}{d-k+1} - \binom{m+1}{d-k+1}.
$$
Following the usual convention, $\binom nm := 0$ whenever $m\in \Z \backslash\{0,\ldots, n\}$. 
\end{itemize}
\end{theorem}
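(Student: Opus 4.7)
The plan is to read off both the facets and the full face lattice from the single affine dependence encoded in the Radon partition $V\cup W$. Since $V\cup W$ consists of $d+2$ points in general affine position in $\R^d$, their space of affine dependencies is one-dimensional, so any nontrivial affine relation among them is a scalar multiple of the one supplied by $O$: expressing $O=\sum_{i=0}^m \alpha_i v_i=\sum_{j=0}^{d-m} \beta_j w_j$ with strictly positive coefficients summing to $1$ on each side yields the relation $\sum_i \alpha_i v_i-\sum_j \beta_j w_j=0$. The rigid sign pattern I will exploit throughout is that the coefficients on $V$ are all of one sign and those on $W$ are all of the opposite sign.

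For part (a), I would pick an arbitrary $d$-element subset $A\subseteq V\cup W$, use general position to conclude that it is affinely independent and spans a hyperplane $H=\{\ell=c\}$, then apply $\ell$ to the Radon identity above. A short rearrangement leaves a linear relation between $\ell(x)-c$ at the two omitted vertices, with strictly positive weights. When the two omissions split as one from $V$ and one from $W$, the two terms end up on opposite sides of the equality and the omitted vertices must lie on the same side of $H$, so $\conv A$ is a facet; when both omissions come from the same part, the terms share a sign and force the omitted vertices onto opposite sides of $H$, precluding facet-hood. The facets are thereby put in bijection with $V\times W$, giving the count $(m+1)(d-m+1)$.

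For part (b), part (a) already implies that $T$ is simplicial because each facet has exactly $d$ vertices. If $U\subseteq V\cup W$ omits at least one vertex from each of $V$ and $W$, then $U$ sits inside the vertex set of one of the facets from part (a), so $\conv U$ is a face of that simplex and hence of $T$. Conversely, if $V\subseteq U$ and $\conv U$ were a proper face cut out by $\{\ell=c\}$, then $O\in\conv V\subseteq\conv U$ would force $\ell(O)=c$; expanding $O=\sum_j \beta_j w_j$ with $\ell(w_j)\ge c$ for all $j$ and strict inequality for some $w_j\notin U$ (which exists because $U$ is a proper subset containing $V$) would give $\ell(O)>c$, a contradiction, and the case $W\subseteq U$ is symmetric. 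The $f_k$ formula then drops out of inclusion--exclusion on the conditions ``$V\subseteq U$'' and ``$W\subseteq U$'' among $(k+1)$-subsets of $V\cup W$, using that these conditions are mutually exclusive once $k\le d-1$. The main subtlety will be keeping the signs straight in the linear-functional identity, but the one-dimensionality of the affine-dependence space reduces that step to a single clean calculation reused in every case.
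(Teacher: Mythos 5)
Your proposal is correct. Note that the paper itself offers no proof of this statement --- it is quoted from Gr\"unbaum's book (Section~6.1) as a known classification result --- so there is no in-paper argument to compare against. Your derivation is a clean, self-contained route: the key observation that the affine-dependence space of $d+2$ points in general position in $\R^d$ is one-dimensional, with the Radon partition fixing the sign pattern (all of $V$ on one side of the dependence, all of $W$ on the other), is exactly the Gale-dual mechanism underlying Gr\"unbaum's treatment, and your single linear-functional identity correctly decides facet-hood in part (a) and non-face-hood of any $U$ containing $V$ or $W$ in part (b). The inclusion--exclusion count at the end is right once one notes $\binom{d-m+1}{k-m}=\binom{d-m+1}{d-k+1}$ and $\binom{m+1}{k-d+m}=\binom{m+1}{d-k+1}$, and that no proper $(k+1)$-subset with $k\le d-1$ can contain both $V$ and $W$. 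The only points worth making explicit in a write-up are (i) that general position forces the two omitted points in part (a) to lie strictly off the spanning hyperplane, so the sign dichotomy is genuinely exhaustive, and (ii) the slight bookkeeping in the case $m=0$, where $v_0$ is not a vertex of $T$ but lies in $\relint T$, so it is automatically strictly off every supporting hyperplane; neither affects the validity of the argument.
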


Dropping the assumption $\# V \leq \#W$ it is possible to introduce types $\tilde T_0^d, \tilde T_1^d,\ldots, \tilde T_d^d$ by defining $T$ to be of type $\tilde T_m^d$ if $\# V = m+1$ for all $m \in \{0, \ldots, d\}$.  Since the roles of $V$ and $W$ are now exchangeable, the combinatorial types $\tilde T_{m}$ and $\tilde T_{d-m}$ are in fact equal. This means that each type $T_{m}^d$ with $m \in \{0,\ldots, \lfloor d/2\rfloor\}$ is a ``union'' of two types $\tilde T_m^d$ and $\tilde T_{d-m}^d$, with one exception: if $d=2m$, then it is a union of just one type.   This explains why the following function will frequently appear in the sequel:
\begin{equation}\label{eq:sylvester_types_eta_d_m_def}
\eta_{d,m}
:=
1 + \ind_{\{m\neq d/2\}}
=
\begin{cases}
2,
&\text{ if } m\neq d/2,
\\
1, &\text{ if } m=d/2.
\end{cases}
\end{equation}

\subsection{Cones and their angles}\label{subsec:cones_angles_defs}
The positive hull of $n$ vectors $x_1,\ldots, x_n\in \R^d$ is denoted by
$$
\pos(x_1,\ldots,x_n) := \left\{\lambda_1 x_1 + \ldots + \lambda_n x_n: \lambda_1,\ldots,\lambda_n\geq 0\right\}
$$
A polyhedral cone is a positive hull of finitely many vectors in $\R^n$. Equivalently, a polyhedral cone is an intersection of finitely many half-spaces of the form $\{x\in \R^d: \langle x, u \rangle \leq 0\}$, where $u\in \R^{d}\bsl \{0\}$. Let $C$ be a polyhedral cone, $\lin C$ the minimal linear subspace containing $C$, and $\lambda_{\lin C}$ the Lebesgue measure on $\lin C$.  The angle of $C$ is defined as
$$
\alpha(C) = \frac{\lambda_{\lin C}(\bB^d \cap C)}{\lambda_{\lin C} (\bB^d \cap \lin C)}.
$$
That is, the angle is the proportion of the unit ball occupied by the cone,  in $\lin C$ as the ambient space. Note that $\alpha(C)$ takes values between $0$ and $1$.

The tangent cone of a polytope $P= [x_1,\ldots, x_n]$ at its face $F$ is defined as $T(F,P) := \pos (x_1-f,\ldots, x_n-f)$, where $f$ is any point in the relative interior of $F$. The internal angle of $P$ at $F$ is defined as $\alpha(T(F,P))$.

\section{Probabilities of types for convex hulls}

\subsection{Probabilities of types in terms of the expected \texorpdfstring{$f$}{f}-vector}
Let $X_1,\ldots, X_{d+2}$ be (possibly dependent) random points in $\R^d$. Their joint distribution is required to satisfy only the following condition: the points $X_1,\ldots, X_{d+2}$ are in general affine position with probability $1$. Then, their convex hull $P:= [X_1,\ldots, X_{d+2}]$ is a.s.\ a $d$-dimensional simplicial polytope. We are interested in computing the probabilities
$$
p_{d,m}: = \P[P \text{ is of type } T_m^{d}],
\qquad
m\in \{0,1,\ldots, \lfloor d/2 \rfloor\}.
$$
The next theorem expresses $p_{d,m}$ in terms of the expected face numbers $\E f_0(P), \ldots, \E f_m(P)$. We recall that $f_j(P)$ denotes the number of $j$-dimensional faces of $P$.
\begin{theorem}[Probabilities of types and expected $f$-vector]\label{theo:sylvester_probab_types_trhough_expected_f_vectors}
Under the above assumptions,  for all $m\in \{0,1,\ldots, \lfloor d/2 \rfloor\}$ we have
$$
p_{d,m}: = \P[P \text{ is of type } T_m^{d}] = \frac 12 \eta_{d,m} \cdot \sum_{\ell=0}^{m} (-1)^{m+\ell} \binom{d-\ell+1}{d-m+1} \left(\binom{d+2}{\ell+1} - \E f_{\ell }(P)\right),
$$
where $\eta_{d,m}$ is as in~\eqref{eq:sylvester_types_eta_d_m_def}.
\end{theorem}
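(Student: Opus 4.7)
The strategy is to invert a triangular linear system relating the type probabilities to the deficits $c_{d,\ell}:=\binom{d+2}{\ell+1}-\E f_\ell(P)$ by means of a classical alternating-binomial inversion.

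First, I would condition on the combinatorial type of $P$ and apply the face-count formula from Theorem~\ref{theo:facets_polys_d+2_vert}(b); using $\sum_k p_{d,k}=1$ the resulting expression rearranges to
$$c_{d,\ell}=\sum_{k=0}^{\lfloor d/2\rfloor}p_{d,k}\left[\binom{d-k+1}{d-\ell+1}+\binom{k+1}{d-\ell+1}\right].$$
The right-hand side is invariant under $k\leftrightarrow d-k$, reflecting the fact that the labeling of the two blocks of the Radon partition of $\{X_1,\ldots,X_{d+2}\}$ is immaterial. The natural next move is to \emph{symmetrize}: set $q_{d,m}:=p_{d,m}/\eta_{d,m}$ for $m\in\{0,\ldots,\lfloor d/2\rfloor\}$ and extend by $q_{d,m}:=q_{d,d-m}$ to $m\in\{\lfloor d/2\rfloor+1,\ldots,d\}$, so that $\sum_{m=0}^{d}q_{d,m}=1$. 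Probabilistically, $q_{d,m}$ is the probability that a uniformly chosen block of the a.s.\ unique Radon partition has $m+1$ elements. A short pairing argument (with the middle term $m=d/2$ absorbed by $\eta_{d,d/2}=1$ when $d$ is even), followed by the change of variable $m\mapsto d-m$ in the sum containing $\binom{m+1}{d-\ell+1}$, collapses the two binomials into one and yields the lower-triangular system
$$\frac{c_{d,\ell}}{2}=\sum_{m=0}^{\ell}q_{d,m}\binom{d-m+1}{d-\ell+1}\qquad(\ell=0,1,\ldots,\lfloor d/2\rfloor),$$
with the truncation at $m=\ell$ coming from $\binom{d-m+1}{d-\ell+1}=0$ for $m>\ell$.

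The diagonal entries of this system are all $1$, and its inverse is supplied by the classical identity $\sum_{j=i}^{k}(-1)^{j-i}\binom{j}{i}\binom{k}{j}=\delta_{ik}$ (obtained from $\binom{j}{i}\binom{k}{j}=\binom{k}{i}\binom{k-i}{j-i}$ together with the binomial expansion of $(1-1)^{k-i}$), yielding
$$q_{d,m}=\frac{1}{2}\sum_{\ell=0}^{m}(-1)^{m-\ell}\binom{d-\ell+1}{d-m+1}\,c_{d,\ell}.$$
Multiplying by $\eta_{d,m}$ (and using $(-1)^{m-\ell}=(-1)^{m+\ell}$) then delivers the claim. The main delicate point is the parity case $d$ even with $m=d/2$, where the two binomial contributions in the original $p$-expansion coincide rather than one of them vanishing; the symmetrization makes this case disappear uniformly, and after that the remainder is routine bookkeeping.
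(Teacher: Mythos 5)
Your proposal is correct and follows essentially the same route as the paper: both set up the lower-triangular linear system relating the deficits $\binom{d+2}{\ell+1}-\E f_\ell(P)$ to the type probabilities via the formula of total probability and Theorem~\ref{theo:facets_polys_d+2_vert}, and both invert it using the same alternating binomial identity $\sum_{j}(-1)^{j-i}\binom{j}{i}\binom{k}{j}=\delta_{ik}$. The only difference is cosmetic: you absorb the exceptional middle case $m=d/2$ by symmetrizing $q_{d,m}=q_{d,d-m}$ over the two blocks of the Radon partition, whereas the paper keeps the indicator term $\ind_{\{m=j=d/2\}}$ in the matrix and compensates with the factor $\frac12\eta_{d,m}$ in its inverse; both devices are correct.
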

\begin{proof}
Let us write  $p_m := p_{d,m}$ for the probability that $P$ is of combinatorial type $T_m^d$.  We can express $\E f_j (P)$ using the formula of the total probability as follows:
\begin{equation}\label{eq:probab_of_types_linear_eqs}
\E f_j (P) = \sum_{m = 0}^{\lfloor d/2 \rfloor} p_{m} f_j (T_m^d), \qquad j\in \{0, \ldots, d-1\}.
\end{equation}
The face numbers of combinatorial type $T_m^d$ are known; see Theorem~\ref{theo:facets_polys_d+2_vert}. It will be convenient to rewrite the formula for $f_j (T_m^d)$ as follows:
\begin{equation}\label{eq:polytope_d+2_vert_f_vector}
\binom{d+2}{j+1} - f_j (T_m^d) = \binom{d-m+1}{d-j+1} + \binom{m+1}{d-j+1},
\qquad
j\in \{0,\ldots, d-1\}.
\end{equation}

Thus, \eqref{eq:probab_of_types_linear_eqs} is a system of $d$ linear equations in the $\lfloor d/2\rfloor +1$ unknowns $p_0, \ldots, p_{\lfloor d/2 \rfloor}$. To make the number of equations equal the number of unknowns, we will consider $j\in \{0,1,\ldots, \lfloor d/2 \rfloor\}$  only. (It can be shown that the discarded equations do not contain any new information).
Using $p_{0} + \ldots + p_{\lfloor d/2 \rfloor} = 1$, let us rewrite~\eqref{eq:probab_of_types_linear_eqs} in the form
\begin{align}
y_j
:&= \binom {d+2}{j+1} -  \E f_j(P)
=
\sum_{m= 0}^{\lfloor d/2 \rfloor} \left(\binom {d+2}{j+1} - f_j(T_m^d)\right) p_{m}\notag
\\
&= \sum_{m= 0}^{j} \left(\binom{d-m+1}{j-m} + \ind_{\{m = j = d/2\}}\right) p_m,
\qquad
j\in \{0,1,\ldots, \lfloor d/2 \rfloor\}. \label{eq:probab_of_types_linear_eqs_complements}
\end{align}
Here we used that for $j\in \{0,1,\ldots, \lfloor d/2 \rfloor\}$, the formula involving $f_j(T_m^d)$ given in~\eqref{eq:polytope_d+2_vert_f_vector} simplifies as follows:
$$
\binom{d+2}{j+1} - f_j (T_m^d) = \binom{d-m+1}{d-j+1} + \ind_{\{m = j = d/2\}} = \binom{d-m+1}{j-m} + \ind_{\{m = j = d/2\}}.
$$
Consider the vectors $y = (y_0, y_1, \ldots, y_{\lfloor d/2 \rfloor})^\top$ and $p = (p_0, p_1,\ldots, p_{\lfloor d/2 \rfloor})^\top$ and the lower-triangular matrix
$$
A := (a_{jm})_{j, m = 0, \ldots, \lfloor d/2 \rfloor} \text{ with }  a_{j m} = \binom{d-m+1}{j-m} + \ind_{\{j = m = d/2\}}.
$$
The system~\eqref{eq:probab_of_types_linear_eqs_complements} takes the form $y = A p$, and its solution is $p = A^{-1} y$. Note that the matrix $A$ is lower-triangular, that is $a_{jm} = 0$ for $m>j$. Its diagonal entries are given by $a_{jj} = 1 + \ind_{\{j = m = d/2\}}$ (that is, all diagonal elements are $1$ except the last one which equals $1$ if $d$ is odd and $2$ if $d$ is even). Hence, the inverse matrix $A^{-1}$ indeed exists. We claim that it  has the form
$$
B = (b_{m\ell})_{m, \ell = 0, \ldots, \lfloor d/2 \rfloor} \text{ with }  b_{m \ell} = (-1)^{m+\ell} \binom{d-\ell+1}{m-\ell} \cdot
\frac 12 \eta_{d,m}.
%\begin{cases}
%1/2 & \text{ if } m= d/2,\\
%1   & \text{otherwise}.
%\end{cases}.
$$
The matrix $B$ is also lower-triangular, that is $b_{m\ell} = 0$ for $\ell>m$. Its diagonal entries are equal to $1$ except the last one.
%which equals $1$ if $d$ is odd and $1/2$ if $d$ is even.
The role of the factor $\frac 12 \eta_{d,m}$ in the definition of $B$ is to multiply the last row of $B$ by $1/2$ if $d$ is even. Showing that $AB$ is the identity matrix amounts to proving the identity $\sum_{m=0}^{j} a_{jm} b_{m\ell} = \delta_{j\ell}$ for all $j,\ell \in \{0,\ldots, \lfloor d/2 \rfloor\}$. Note that if $d$ is even and $j= d/2$, then $a_{\lfloor d/2\rfloor \lfloor d/2\rfloor}= 2$ is multiplied with the elements of the last row of $B$, which contain an additional factor of $1/2$. Thus, in terms of binomial coefficients, the identity takes the form
\begin{equation}\label{eq:sylvester_types_faces_identity_binom_coeff}
\sum_{m=\ell}^j \binom{d-m+1}{j-m} (-1)^{m+\ell} \binom{d-\ell+1}{m-\ell}  = \delta_{j\ell}.
\end{equation}
To prove this formula, which holds for all integer $\ell \leq j \leq d+1$, we need the identity
%which holds for all integer $0\leq c \leq a$:
$$
\sum_{i=0}^c (-1)^i \binom{a-i}{c-i}  \binom{a}{i} = \binom ac  \sum_{i=0}^c (-1)^i \binom ci = \ind_{\{c=0\}}, \qquad 0\leq c \leq a, \quad a,c\in \Z;
$$
see also~\cite[p.~149]{gruenbaum_book} for a more general identity. Taking $a= d - \ell + 1$, $c= j-\ell$ and changing the index of summation to $m = i + \ell$ gives~\eqref{eq:sylvester_types_faces_identity_binom_coeff}.
%$$
%\sum_{i=0}^c (-1)^i \binom{a-i}{c-i}  \binom{b}{i} = \binom{a-b}{c}, \qquad 0\leq c \leq a.
%$$
\end{proof}
%\begin{remark}
%It can be shown that the rank of the system is $\lfloor d/2\rfloor + 1$. Indeed, since both $P$ and $T_{\ell}^d$ are simplicial polytopes, their $f$-vectors satisfy the Dehn-Sommerville relations which can be used to express $f_j(P)$ and $f_{j}(T_\ell^d)$ with $j> \lfloor d/2 \rfloor$ through the corresponding quantities with $j \leq \lfloor d/2 \rfloor$.
%\end{remark}

\subsection{Gaussian random points}\label{subsec:sylvester_refined_gaussian}
The next theorem solves the refined Sylvester problem for $d+2$ i.i.d.\ Gaussian points in $\R^d$. To state it, let $\Phi(z)$ be the standard normal distribution function which is extended to an analytic function on the entire complex plane via
\begin{equation}\label{eq:Phi_def}
\Phi(z)
=
\frac 12 + \frac 1 {\sqrt{2\pi}} \int_{0}^z \eee^{-t^2/2} \dd t
=
\frac 12  + \frac 1 {\sqrt{2\pi}} \sum_{n =0}^{\infty}  \frac{(-1)^n }{(2n+1) 2^n n!} z^{2n+1}, \qquad z\in \C.
\end{equation}
The intergal is taken over any contour connecting $0$ to $z$. We shall frequently use the identity $\Phi(z) + \Phi(-z) = 1$, for all $z\in \C$. For real $z$, it follows from the symmetry of the standard Gaussian distribution, while for complex $z$ it holds by analytic continuation.
\begin{theorem}[Probabilities of types for Gaussian points]\label{theo:sylvester_probab_types_gauss}
Let $X_1,\ldots, X_{d+2}$ be independent and standard Gaussian random points in $\R^d$. Consider their convex hull $\cP_{d+2,d}=[X_1,\ldots, X_{d+2}]$. Then, for all $m\in \{0,1,\ldots, \lfloor d/2 \rfloor\}$,
\begin{align}
p_{d,m}^{\text{Gauss}}
:&=
\P[\cP_{d+2,d} \text{ is of type } T_m^{d}] \notag
\\
&=
\frac{\eta_{d,m}}{\sqrt{2\pi}} \binom{d+2}{m+1} \int_{-\infty}^{+\infty} \eee^{-x^2/2} \Phi^{m+1}\left(\frac{\ii x}{\sqrt{d+2}}\right) \Phi^{d+1-m}\left(-\frac{\ii x}{\sqrt{d+2}}\right) \dint x.
\label{eq:sylvester_refined_gaussian_formula}
\end{align}
\end{theorem}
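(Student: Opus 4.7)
The plan is to bypass Theorem~\ref{theo:sylvester_probab_types_trhough_expected_f_vectors} and argue directly via the Radon partition of $\{X_1,\ldots,X_{d+2}\}$. Since these points are a.s.\ in general affine position, there is a unique-up-to-scaling nonzero vector $c=(c_1,\ldots,c_{d+2})$ with $\sum_i c_i X_i = 0$ and $\sum_i c_i = 0$, and by Theorem~\ref{theo:comb_types_polys_d+2_vert} the polytope $\cP_{d+2,d}$ has type $T_m^d$ if and only if the multiset of sizes of the sign classes of $c$ equals $\{m+1,d+1-m\}$. Viewing $c$ as the (up-to-scaling) null vector of the $(d+1)\times(d+2)$ random matrix whose first $d$ rows are i.i.d.\ standard Gaussian in $\R^{d+2}$ and whose last row is $\mathbf{1}^\top$, I would project the Gaussian rows onto the hyperplane $H:=\mathbf{1}^\perp$ to get $d$ i.i.d.\ standard Gaussian vectors in $H$; their orthogonal complement in $H$ is then a uniformly distributed random line by the rotational invariance of the Gaussian in $H$. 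In particular, the sign pattern of $c$ has the same law as that of $(h_1-\bar h,\ldots,h_{d+2}-\bar h)$, where $h_1,\ldots,h_{d+2}$ are i.i.d.\ $N(0,1)$ and $\bar h:=(h_1+\cdots+h_{d+2})/(d+2)$.

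Combining this with exchangeability over the $\binom{d+2}{m+1}$ choices of the positive-index set and the $c\leftrightarrow -c$ symmetry (encoded in $\eta_{d,m}$, cf.~\eqref{eq:sylvester_types_eta_d_m_def}), I obtain $p_{d,m}^{\text{Gauss}} = \eta_{d,m}\binom{d+2}{m+1}P_{d,m}$ with
\[
P_{d,m} \;:=\; \P\bigl[h_1,\ldots,h_{m+1}>\bar h,\; h_{m+2},\ldots,h_{d+2}<\bar h\bigr].
\]
The theorem then reduces to verifying $P_{d,m}=\frac{1}{\sqrt{2\pi}}\int_{-\infty}^{+\infty} e^{-x^2/2}\Phi(ix/\sqrt{d+2})^{m+1}\Phi(-ix/\sqrt{d+2})^{d+1-m}\,dx$.

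To this end, I would use the analytic identities (valid for every $s\in\C$ by completing the square and analytic continuation)
\[
\int_{-\infty}^{0}\phi(h)e^{-ish}\,dh = e^{-s^2/2}\Phi(is),\qquad \int_{0}^{+\infty}\phi(h)e^{-ish}\,dh = e^{-s^2/2}\Phi(-is).
\]
Setting $s=x/\sqrt{d+2}$, raising these to the powers $m+1$ and $d+1-m$ respectively and multiplying produces the prefactor $e^{(d+2)s^2/2}=e^{x^2/2}$, which cancels exactly the weight $e^{-x^2/2}$ in the target integral. Writing the product of $d+2$ half-line integrals as a single integral over $\R^{d+2}$ and using $\sum_i h_i=(d+2)\bar h$, the right-hand side of the target identity becomes
\[
\frac{1}{\sqrt{2\pi}}\int dx\,\int_{\R^{d+2}} \prod_i \phi(h_i)\, e^{-ix\sqrt{d+2}\,\bar h}\prod_{i\le m+1}\!\ind_{\{h_i\le 0\}}\prod_{j>m+1}\!\ind_{\{h_j\ge 0\}}\,dh.
\]
Exchanging the order of integration, the inner $x$-integral evaluates to $\sqrt{2\pi/(d+2)}\,\delta(\bar h)$; the delta function extracts the conditional probability $\P[h_1,\ldots,h_{m+1}\le 0,\ h_{m+2},\ldots,h_{d+2}\ge 0\mid \bar h=0]$. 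Since the conditional law of $(h_i)$ given $\bar h=0$ is that of $(\epsilon_i):=(h_i-\bar h)$ (by the independence $\bar h\perp\epsilon$) and since the Gaussian $\epsilon$ satisfies $\epsilon\eqdistr -\epsilon$, this coincides with $P_{d,m}$.

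The main technical delicacy lies in the interchange of integrations, which is not justified by absolute convergence since the $x$-integral is only distributional. I plan to handle this by Gaussian mollification: insert a factor $e^{-\eta x^2}$ with $\eta>0$, apply Fubini in the resulting absolutely convergent regime, and pass to $\eta\downarrow 0$. The Schwartz decay of $\phi$, together with the Gaussian-strip growth bounds on the complex $\Phi$'s, keeps every intermediate quantity in the tempered-distribution class and makes the limit routine.
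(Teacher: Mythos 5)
Your proposal is correct, but it takes a genuinely different route from the paper. The paper's proof runs through Theorem~\ref{theo:sylvester_probab_types_trhough_expected_f_vectors} (inverting the linear relation between type probabilities and the expected $f$-vector), the Affentranger--Schneider/Baryshnikov--Vitale formula reducing $\E f_\ell(\cP_{d+2,d})$ to the internal angle sums $\bJ_{d+2,\ell+1}(\infty)$ of the regular simplex, the Rogers/Vershik--Sporyshev integral formula for those angles, and finally Lemma~\ref{lem:probab_types_proof_identity_binomial_coeff} together with the vanishing identity~\eqref{eq:int_gauss_density_gauss_distr_funct_complex_arg} to collapse the alternating sum. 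You instead identify the combinatorial type with the sign classes of the (a.s.\ unique up to scaling) affine dependence $\sum c_iX_i=0$, $\sum c_i=0$, and use rotational invariance to show that the sign pattern of $c$ is distributed as that of $(h_i-\bar h)_i$ for i.i.d.\ standard normals $h_i$ --- this is in effect an independent proof of the Frick--Newman--Pegden equivalence (Theorem~\ref{theo:frick_newman_pegden}), and the remaining orthant probability $P_{d,m}$ is exactly the Youden probability of Theorem~\ref{theo:youden_probability}, which you then compute directly by recognizing $\eee^{-s^2/2}\Phi(\pm\ii s)$ as the half-line Fourier transforms of the Gaussian density and extracting a $\delta(\bar h)$ from the $x$-integral. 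Your Fourier computation is precisely a rigorous version of the ``negative variance'' heuristic in Remark~\ref{theo:youden_probability}\,ff.\ of the paper, so the logical order is reversed: the paper deduces Theorem~\ref{theo:youden_probability} from Theorem~\ref{theo:sylvester_probab_types_gauss}, while you prove the Youden formula first and obtain the Gaussian type probabilities from it. The trade-off is clear: the paper's route reuses Theorem~\ref{theo:sylvester_probab_types_trhough_expected_f_vectors} as a common engine for the beta, beta prime, and random-walk models, whereas your argument is specific to the Gaussian case but is self-contained, explains conceptually why $\Phi$ of an imaginary argument appears, and yields Theorems~\ref{theo:frick_newman_pegden} and~\ref{theo:youden_probability} as by-products. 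Your mollification step is sound: each half-line factor is $O(|x|^{-1})$ for real $x$, so the full integrand is $O(|x|^{-(d+2)})$ and dominated convergence applies as $\eta\downarrow 0$ on both sides of the Fubini exchange; just make sure to state explicitly that the type is read off from the \emph{unordered} partition into sign classes, so that the $c\leftrightarrow-c$ ambiguity and the factor $\eta_{d,m}$ (with the coincidence of the two classes when $m=d/2$) are handled cleanly.
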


Here, $\ii = \sqrt{-1}$. Note that the formula involves $\Phi$ of a complex argument. In the proof of this theorem and at several other occasions we shall need the following simple
\begin{lemma}\label{lem:probab_types_proof_identity_binomial_coeff}
For all  $d\in \N_0:= \{0,1,\ldots\}$ and $m\in\{0,\ldots, d+1\}$ we have
$$
\sum_{\ell=0}^m (-1)^{m+\ell}\binom{d+1-\ell}{d+1-m} \binom{d+2}{\ell+1} z^{d+1-\ell} = \binom{d+2}{m+1} \left(z^{d+1-m} (1-z)^{m+1} + (-1)^m z^{d+2}\right).
$$
\end{lemma}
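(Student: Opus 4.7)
The identity is purely algebraic, so the plan is to massage the binomial coefficients until the sum reduces to a single application of the binomial theorem.

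The first step is to extract the factor $\binom{d+2}{m+1}$ from each summand. I would verify the identity
\[
\binom{d+2}{\ell+1}\binom{d+1-\ell}{d+1-m} = \binom{d+2}{m+1}\binom{m+1}{\ell+1},
\]
which follows by writing both sides as $\frac{(d+2)!}{(\ell+1)!\,(m-\ell)!\,(d+1-m)!}$. (Note that $\binom{d+1-\ell}{d+1-m}=\binom{d+1-\ell}{m-\ell}$.) After pulling out $\binom{d+2}{m+1}$, the LHS becomes
\[
\binom{d+2}{m+1}\sum_{\ell=0}^{m}(-1)^{m+\ell}\binom{m+1}{\ell+1}z^{d+1-\ell}.
\]

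The second step is to reindex with $k=\ell+1$ and factor out $z^{d+1-m}$. The sum becomes
\[
(-1)^{m-1}\sum_{k=1}^{m+1}(-1)^{k}\binom{m+1}{k}z^{d+2-k}
= (-1)^{m-1}z^{d+1-m}\sum_{k=1}^{m+1}(-1)^{k}\binom{m+1}{k}z^{m+1-k}.
\]
The inner sum, extended to include $k=0$, is exactly $(z-1)^{m+1}$ by the binomial theorem; subtracting the $k=0$ term $z^{m+1}$ and multiplying by $z^{d+1-m}$ gives $z^{d+1-m}(z-1)^{m+1}-z^{d+2}$.

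The final step is bookkeeping with signs: writing $(z-1)^{m+1}=(-1)^{m+1}(1-z)^{m+1}$ and combining with the prefactor $(-1)^{m-1}$ produces
\[
\binom{d+2}{m+1}\Bigl(z^{d+1-m}(1-z)^{m+1}+(-1)^m z^{d+2}\Bigr),
\]
which is the RHS. There is no real obstacle here; the only thing to watch carefully is the sign tracking in the last step, since the two parity factors $(-1)^{m-1}$ and $(-1)^{m+1}$ must combine correctly to yield $+1$ on the $(1-z)^{m+1}$ term and $(-1)^m$ on the $z^{d+2}$ term.
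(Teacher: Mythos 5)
Your proof is correct and follows essentially the same route as the paper's: the same factorization $\binom{d+2}{\ell+1}\binom{d+1-\ell}{d+1-m}=\binom{d+2}{m+1}\binom{m+1}{\ell+1}$, the same index shift, and the same application of the binomial theorem (the paper merely absorbs the sign into $(-z)$ rather than tracking it separately). The sign bookkeeping in your final step checks out.
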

\begin{proof}
Observe that
$$
\binom{d+1-\ell}{d+1-m} \binom{d+2}{\ell+1} = \frac{(d+2)!}{(d+1-m)! (m-\ell)! (\ell+1)!} = \binom{d+2}{m+1} \binom{m+1}{\ell+1}.
$$
Using this identity and then the index shift $K = \ell+1$ gives
\begin{align*}
\sum_{\ell=0}^m (-1)^{m+\ell} \binom{d+1-\ell}{d+1-m} \binom{d+2}{\ell+1} z^{d+1-\ell}
&=
\binom{d+2}{m+1} z^{d+1-m}  \sum_{\ell=0}^m \binom{m+1}{\ell+1} (-z)^{m-\ell}
\\
&=
\binom{d+2}{m+1} z^{d+1-m}  \sum_{K=1}^{m+1} \binom{m+1}{K} (-z)^{m+1-K}
\\
&=
\binom{d+2}{m+1} z^{d+1-m} \left( (1-z)^{m+1} - (-z)^{m+1}\right),
\end{align*}
and the proof of the lemma is complete.
\end{proof}
Another fact needed in the proof of Theorem~\ref{theo:sylvester_probab_types_gauss} is the following identity which can be found in~\cite[Remark~1.4]{kabluchko_zaporozhets_absorption}:
\begin{equation}\label{eq:int_gauss_density_gauss_distr_funct_complex_arg}
\int_{-\infty}^{+\infty} \eee^{-x^2/2} \Phi^{n}\left(\frac{\ii x}{\sqrt{n}}\right) \dint x = \int_{-\infty}^{+\infty} \eee^{-x^2/2} \Phi^{n}\left(-\frac{\ii x}{\sqrt{n}}\right) \dint x = 0,\qquad n \in \{2,3,\ldots\}.
\end{equation}
Note that as soon as one integral is shown to equal $0$, then the other identity follows from substituting $-x$ for $x$. We shall give an independent proof of~\eqref{eq:int_gauss_density_gauss_distr_funct_complex_arg} in Section~\ref{subsec:Phi_identity_integral_proof}.
We also remark that~\eqref{eq:int_gauss_density_gauss_distr_funct_complex_arg} with $n=d+2$ can be viewed as a manifestation of the fact that $p_{d,-1}^{\text{Gauss}}= 0$: take formally $m=-1$ in~\eqref{eq:sylvester_refined_gaussian_formula}.
% {and substitute $x$ for $-x$.}

%In order not to distract the reader from the main results,

\begin{proof}[Proof of Theorem~\ref{theo:sylvester_probab_types_gauss}]
The proof relies on an explicit formula for $\E f_\ell(\cP_{d+2,d})$. To state it, we need some preparations.
Let $e_1,\ldots,e_n$ be the standard orthonormal basis in $\R^n$. Consider the $n$-vertex regular simplex
$
\Delta_n:=\conv(e_1,\ldots,e_n).
$
Note that $\dim \Delta_n = n-1$. Consider some $(k-1)$-dimensional face of $\Delta_n$, for example $\Delta_k:= \conv(e_1,\ldots, e_k)$. Here, $k\in \{1,\ldots, n\}$. By the definition of tangent cones given in Section~\ref{subsec:cones_angles_defs}, the tangent cone of $\Delta_n$ at $\Delta_k$ is given by
$$
T(\Delta_k, \Delta_n) = \pos\left(e_i- \frac {e_1+\ldots+e_k}k: i=1,\ldots, n\right).
$$
The internal angle of $\Delta_n$ at $\Delta_k$ is $\alpha (T(\Delta_k, \Delta_n))$, where $\alpha(C)$ denotes the angle of the polyhedral cone $C$. 
The internal angle sum of $\Delta_n$ at its $(k-1)$-dimensional faces is denoted by
$$
\bJ_{n,k}(\infty) =
%\binom nk \cdot  \alpha \left(\pos \left\{e_i - \frac {e_1+\ldots+e_k}k : i=k+1,\ldots, n \right\} \right),
\binom nk \cdot  \alpha \left(T(\Delta_k, \Delta_n)\right),
\quad k\in \{1,\ldots, n\}.
$$
%Actually, we need only the angle sum at vertices, defined as $\bJ_{n,1}(\infty) = n \cdot  \alpha(\pos (e_2-e_1,\ldots, e_n-e_1))$.
The following explicit formulas for these angle sums are known: For all $n\in \N$ and $k\in \{1,\ldots, n\}$,
\begin{align}
\bJ_{n,k}(\infty)
&=
\binom nk \cdot \frac 1 {\sqrt {2\pi}} \int_{-\infty}^{+\infty} \Phi^{n-k} \left( \frac{\ii x}{\sqrt n}\right)  \eee^{-x^2/2} \dd x, \label{eq:regular_simpl_internal}
%\bI_{n,k}(\infty)
%&=
%\binom nk \cdot \frac 1 {\sqrt {2\pi}} \int_{-\infty}^{+\infty} \Phi^{n-k} \left(\frac{x}{\sqrt k}\right) \eee^{-x^2/2} \dd x;
%\label{eq:regular_simpl_external}
\end{align}
see the papers of Rogers~\cite[Section~4]{rogers} (where the method used was attributed to H.\ E.\ Daniels) as well as Vershik and Sporyshev~\cite[Lemma~4]{vershik_sporyshev_asymptotic_faces_random_polyhedra1992}. This formula and further  results can be found in~\cite[Proposition~1.2]{kabluchko_zaporozhets_absorption}.

Now, it follows from a formula due to~\citet{affentranger_schneider_random_proj} (taking into account~\cite{baryshnikov_vitale}) that
$$
\E f_\ell(\cP_{d+2,d})
=
\binom{d+2}{\ell+1} - 2 \cdot \bJ_{d+2,\ell+1} \left(\infty\right),
\qquad
\ell \in \{0,\ldots, d\}.
$$
%but it is also easy to give an independent proof. Consider $d+2$ standard Gaussian points in $\R^{d+1}$ and project them
With this formula at hand, we can apply Theorem~\ref{theo:sylvester_probab_types_trhough_expected_f_vectors} to our setting: For all $m\in \{0,1,\ldots, \lfloor d/2 \rfloor\}$,
\begin{align}
p_{d,m}^{\text{Gauss}}
&=
\eta_{d,m} \cdot \sum_{\ell=0}^{m} (-1)^{m+\ell} \binom{d-\ell+1}{d-m+1} \bJ_{d+2,\ell+1} \left(\infty\right) \notag
\\
&=
\eta_{d,m} \cdot  \sum_{\ell=0}^{m} (-1)^{m+\ell} \binom{d-\ell+1}{d-m+1}
\binom {d+2}{\ell+1} \cdot \frac 1 {\sqrt {2\pi}} \int_{-\infty}^{+\infty} \Phi^{d+1-\ell} \left( \frac{\ii x}{\sqrt {d+2}}\right)  \eee^{-x^2/2} \dd x  \notag
\\
&=
\frac {\eta_{d,m}} {\sqrt {2\pi}}  \cdot \int_{-\infty}^{+\infty} \left(\sum_{\ell=0}^{m} (-1)^{m+\ell} \binom{d-\ell+1}{d-m+1}
\binom {d+2}{\ell+1} \Phi^{d+1-\ell} \left( \frac{\ii x}{\sqrt {d+2}}\right)\right)  \eee^{-x^2/2} \dint x.
\label{eq:sylvester_refined_gauss_proof_auxiliary_1}
\end{align}
%Now, we are ready to complete the proof of Theorem~\ref{theo:sylvester_probab_types_gauss}.
By applying Lemma~\ref{lem:probab_types_proof_identity_binomial_coeff}  with $z=\Phi(\frac{\ii x}{\sqrt {d+2}})$ and $1-z = \Phi(-\frac{\ii x}{\sqrt {d+2}})$ we transform~\eqref{eq:sylvester_refined_gauss_proof_auxiliary_1} to
\begin{align}
p_{d,m}^{\text{Gauss}}
&=
\frac {\eta_{d,m}} {\sqrt {2\pi}} \binom{d+2}{m+1} \label{eq:p_d_m_Gauss_complicated_formula}
\\
&\quad
\times
\int_{-\infty}^{+\infty}  \eee^{-x^2/2}
 \left(\Phi^{d+1-m} \left( \frac{\ii x}{\sqrt {d+2}}\right) \Phi^{m+1} \left(-\frac{\ii x}{\sqrt {d+2}}\right) + (-1)^m \Phi^{d+2} \left( \frac{\ii x}{\sqrt {d+2}}\right)\right)
 \dd x.  \notag
\end{align}
To complete the proof, it remains to observe that the contribution of the term $(-1)^m \Phi^{d+2}(\ldots)$ vanishes by~\eqref{eq:int_gauss_density_gauss_distr_funct_complex_arg}.
\end{proof}

\begin{corollary}[Sylvester problem for Gaussian points]
Let $X_1,\ldots, X_{d+2}$ be independent and standard Gaussian random points in $\R^d$. Then, the probability that $[X_1,\ldots, X_{d+2}]$ is a simplex is given by
$$
p_{d,0}^{\text{Gauss}} =  \frac{2(d+2)}{\sqrt{2\pi}} \int_{-\infty}^{+\infty} \eee^{-x^2/2} \Phi^{d+1}\left(\frac{\ii x}{\sqrt{d+2}}\right) \dint x
=
2 \cdot \bJ_{d+2,1} (\infty).
$$
\end{corollary}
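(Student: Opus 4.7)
The plan is to specialize Theorem~\ref{theo:sylvester_probab_types_gauss} (or, even more cleanly, Theorem~\ref{theo:sylvester_probab_types_trhough_expected_f_vectors} combined with the Affentranger--Schneider formula used inside the proof of Theorem~\ref{theo:sylvester_probab_types_gauss}) to the case $m=0$. The shortest route I would take is the latter: by Theorem~\ref{theo:sylvester_probab_types_trhough_expected_f_vectors} with $m=0$ only the $\ell=0$ summand survives, and since $0\neq d/2$ for $d\geq 1$ we have $\eta_{d,0}=2$, which yields $p_{d,0}=\binom{d+2}{1}-\E f_0(\cP_{d+2,d})=(d+2)-\E f_0(\cP_{d+2,d})$. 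The Affentranger--Schneider formula $\E f_\ell(\cP_{d+2,d})=\binom{d+2}{\ell+1}-2\,\bJ_{d+2,\ell+1}(\infty)$ with $\ell=0$ then gives $p_{d,0}^{\text{Gauss}}=2\,\bJ_{d+2,1}(\infty)$ at once. The integral representation is then just~\eqref{eq:regular_simpl_internal} with $n=d+2$, $k=1$, since $\binom{d+2}{1}=d+2$.

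As an alternative (or sanity check), I would also verify the statement directly from Theorem~\ref{theo:sylvester_probab_types_gauss}. Setting $m=0$ there, the integrand reads
\[
\Phi\!\left(\frac{\ii x}{\sqrt{d+2}}\right)\Phi^{d+1}\!\left(-\frac{\ii x}{\sqrt{d+2}}\right).
\]
Using the functional equation $\Phi(z)+\Phi(-z)=1$ on the first factor rewrites this as
\[
\Phi^{d+1}\!\left(-\frac{\ii x}{\sqrt{d+2}}\right)-\Phi^{d+2}\!\left(-\frac{\ii x}{\sqrt{d+2}}\right).
\]
The second term integrates to $0$ against $\eee^{-x^2/2}$ by~\eqref{eq:int_gauss_density_gauss_distr_funct_complex_arg} (applied with $n=d+2$), and in the first term a substitution $x\mapsto -x$ flips the sign of the argument of $\Phi$. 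Together with the prefactor $\eta_{d,0}\binom{d+2}{1}/\sqrt{2\pi}=2(d+2)/\sqrt{2\pi}$ this reproduces the displayed integral, and equality with $2\,\bJ_{d+2,1}(\infty)$ again follows from~\eqref{eq:regular_simpl_internal}.

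There is no real obstacle: the corollary is a direct specialization of the preceding theorem, and the two equalities in its statement are just the two equivalent forms (combinatorial/internal-angle and integral) that already appeared in the proof of Theorem~\ref{theo:sylvester_probab_types_gauss}. The only small point worth flagging is that the vanishing identity~\eqref{eq:int_gauss_density_gauss_distr_funct_complex_arg} is used with $n=d+2\geq 2$, which is guaranteed in any dimension where the problem is non-trivial.
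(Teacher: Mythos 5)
Your proposal is correct and matches the paper's own argument: the paper proves the corollary exactly as in your ``sanity check'' route, by setting $m=0$ in Theorem~\ref{theo:sylvester_probab_types_gauss}, substituting $x\mapsto -x$, applying $\Phi(z)+\Phi(-z)=1$, and killing the $\Phi^{d+2}$ term via~\eqref{eq:int_gauss_density_gauss_distr_funct_complex_arg}, then identifying the result with $2\,\bJ_{d+2,1}(\infty)$ through~\eqref{eq:regular_simpl_internal}. Your preferred shortcut (Theorem~\ref{theo:sylvester_probab_types_trhough_expected_f_vectors} with $m=0$ plus the Affentranger--Schneider formula) is just the same ingredients assembled in the opposite order, yielding $2\,\bJ_{d+2,1}(\infty)$ first and the integral second, so there is no substantive difference.
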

\begin{proof}
Taking $m=0$  in Theorem~\ref{theo:sylvester_probab_types_gauss}, substituting $-x$ for $x$ and using the identity  $\Phi(z) + \Phi(-z) = 1$ gives
\begin{align*}
p_{d,0}^{\text{Gauss}}
&=
\frac{2(d+2)}{\sqrt{2\pi}} \int_{-\infty}^{+\infty} \eee^{-x^2/2} \Phi\left(-\frac{\ii x}{\sqrt{d+2}}\right) \Phi^{d+1}\left(\frac{\ii x}{\sqrt{d+2}}\right) \dint x
\\
&=
\frac{2(d+2)}{\sqrt{2\pi}} \int_{-\infty}^{+\infty} \eee^{-x^2/2} \left(1 - \Phi\left(\frac{\ii x}{\sqrt{d+2}}\right)\right) \Phi^{d+1}\left(\frac{\ii x}{\sqrt{d+2}}\right) \dint x
\\
&=
\frac{2(d+2)}{\sqrt{2\pi}} \int_{-\infty}^{+\infty} \eee^{-x^2/2} \Phi^{d+1}\left(\frac{\ii x}{\sqrt{d+2}}\right) \dint x,
\end{align*}
where the last line follows from~\eqref{eq:int_gauss_density_gauss_distr_funct_complex_arg}. The right-hand side equals $2 \cdot \bJ_{d+2,1} (\infty)$ by~\eqref{eq:regular_simpl_internal}.  A purely geometric proof of the identity $p_{d,0}^{\text{Gauss}} = 2 \cdot \bJ_{d+2,1} (\infty)$ was given in~\cite{gusakova_kabluchko_sylvester_beta}.
\end{proof}

Recently, the ultra log-concavity of the sequence $m\mapsto p_{d,m}^{\text{Gauss}}$ has been established in~\cite{ChanKalaiNarayananTerSaakovWhite2025UnimodalityRadon}.

\subsection{Youden's problem}\label{subsec:youden_problem}
Let $\xi_1,\ldots, \xi_{n}\sim {\rm N}(0,1)$ be independent standard Gaussian random variables with order statistics $\xi_{1:n} \leq \ldots \leq \xi_{n:n}$ and sample mean $\bar{\xi} = \frac 1 n (\xi_1 + \ldots + \xi_n)$. Youden's demon problem~\cite{youden_sets_of_three_measurements,kendall_two_problems_youden,david_sample_mean_among_extreme,david_sample_mean_moderate_order,dmitrienko_demon,dmitrienko_demon_exponential,frick_newman_pegden} asks to determine the probability $P(n,k)$ that $\bar \xi$ lies between $\xi_{k:n}$ and $\xi_{k+1:n}$, for $k=1,\ldots, n-1$. \citet{frick_newman_pegden} related this problem to the Sylvester problem for Gaussian points.  Their result reads as follows.
\begin{theorem}[Frick, Newman, Pegden~\cite{frick_newman_pegden}] \label{theo:frick_newman_pegden}
For all $m \in \{0,\ldots, \lfloor d/2 \rfloor\}$ we have
\begin{equation}\label{eq:frick_newman_pegden}
p_{d,m}^{\text{Gauss}} =  \eta_{d,m} \cdot P(d+2,m+1)
=
\begin{cases}
2 P(d+2,m+1),&\text{ if } m\neq d/2,\\
P(d+2,m+1), &\text{ if } m=d/2.
\end{cases}
\end{equation}
\end{theorem}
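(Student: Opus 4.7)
The plan is to show that both $p_{d,m}^{\text{Gauss}}$ and $P(d+2,m+1)$ can be expressed in terms of the law of a single object: the number of negative coordinates of a uniformly distributed unit vector on the sphere of the hyperplane $H:=\{y\in\R^{d+2}:y_1+\ldots+y_{d+2}=0\}$. The bridge between the two problems is the unique (up to a nonzero scalar) affine dependence vector $\lambda=(\lambda_1,\ldots,\lambda_{d+2})\in H\setminus\{0\}$ satisfying $\sum_{i=1}^{d+2}\lambda_i X_i=0$. By the Radon-partition discussion preceding Theorem~\ref{theo:facets_polys_d+2_vert}, the convex hull $\cP_{d+2,d}=[X_1,\ldots,X_{d+2}]$ has combinatorial type $T_m^d$ if and only if the unordered pair $\{\,\#\{i:\lambda_i>0\},\,\#\{i:\lambda_i<0\}\,\}$ equals $\{\,m+1,\,d+1-m\,\}$.

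The main step is to identify the distribution of the direction of $\lambda$. Let $M$ be the $(d+2)\times d$ matrix with rows $X_1^\top,\ldots,X_{d+2}^\top$, and let $P_H$ denote orthogonal projection onto $H$. Since $\lambda\in H$, the equation $M^\top\lambda=0$ is equivalent to $\lambda\perp P_H M_j$ for every column $M_j$ of $M$. The projections $P_H M_1,\ldots,P_H M_d$ are i.i.d.\ standard Gaussian on the $(d+1)$-dimensional space $H$; their span is a.s.\ a $d$-dimensional subspace whose law is invariant under every orthogonal transformation of $H$, so its orthogonal complement in $H$ is a uniformly distributed random line, and $\lambda/\|\lambda\|$ is uniform on the unit sphere of $H$ modulo $\pm1$. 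Analogously, the centered Gaussian sample $Y:=(\xi_1-\bar\xi,\ldots,\xi_{d+2}-\bar\xi)$ equals $P_H\xi$ for $\xi\sim N(0,I_{d+2})$, so it is a standard Gaussian on $H$, and $Y/\|Y\|$ is uniform on the same sphere. Since $Y_i<0$ if and only if $\xi_i<\bar\xi$, we have $P(d+2,k)=\P(\#\{i:Y_i<0\}=k)$.

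To finish, applying a random independent fair sign flip to $\lambda$ (which does not affect the combinatorial type) makes $\lambda/\|\lambda\|$ and $Y/\|Y\|$ identically distributed, so $\#\{i:\lambda_i<0\}$ and $\#\{i:Y_i<0\}$ have the same law. The Gaussian symmetry $Y\eqdistr -Y$ yields $P(d+2,k)=P(d+2,d+2-k)$. When $m<d/2$ the type $T_m^d$ occurs iff the number of negative $\lambda_i$ is either $m+1$ or $d+1-m$, and these values are distinct, giving $p_{d,m}^{\text{Gauss}}=P(d+2,m+1)+P(d+2,d+1-m)=2\,P(d+2,m+1)$; when $m=d/2$ the two values collapse and $p_{d,m}^{\text{Gauss}}=P(d+2,m+1)$. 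Combining these cases yields~\eqref{eq:frick_newman_pegden}. The main subtlety is the sign ambiguity of $\lambda$, which is handled cleanly by the random-sign-flip reduction to $Y$; a secondary routine check is that the columns $P_H M_1,\ldots,P_H M_d$ are a.s.\ linearly independent in $H$, which follows from the absolute continuity of the Gaussian distribution.
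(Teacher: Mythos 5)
Your proof is correct. Note that the paper itself offers no proof of this statement: it is quoted verbatim from Frick--Newman--Pegden and used as a black box to derive Theorem~\ref{theo:youden_probability}. So there is nothing internal to compare against; what you have written is a self-contained argument for the cited result, and it is essentially the Gale-duality argument of the original reference. The key identification is sound: the affine dependence $\lambda$ is the (one-dimensional) Gale transform of the configuration, the combinatorial type is read off from the sizes of the two sign classes of $\lambda$ via the Radon-partition description preceding Theorem~\ref{theo:facets_polys_d+2_vert}, and both $\lambda/\|\lambda\|$ (after the fair sign flip, which is the right way to resolve the scalar ambiguity) and the centered sample $Y/\|Y\|$ are uniform on the unit sphere of the hyperplane $H$ --- the former by $O(H)$-invariance of the span of the projected Gaussian columns, the latter because $P_H\xi$ is standard Gaussian on $H$. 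The bookkeeping at the end (doubling when $m\neq d/2$ via $P(n,k)=P(n,n-k)$, collapsing when $m=d/2$) matches the factor $\eta_{d,m}$ exactly. The only points you gloss over are routine: that no $\lambda_i$ vanishes a.s.\ (which follows from general affine position, itself automatic for i.i.d.\ Gaussians), and the choice of a measurable unit representative of $\lambda$ before randomizing its sign.
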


%We shall give an independent proof of this result (which we shall embed into a more general framework) in Section~\ref{subsec:probab_types_random_projections}.

Combining this result with the formula for $p_{d,m}^{\text{Gauss}}$ obtained in Theorem~\ref{theo:sylvester_probab_types_gauss}, we can recover an expression for $P(n,k)$ that is equivalent to Kuchelmeister's recent solution \cite{kuchelmeister_youdens_demon} of Youden's problem.
\begin{theorem}[Kuchelmeister~{\cite[Corollary 3]{kuchelmeister_youdens_demon}}]\label{theo:youden_probability}
Consider independent standard  normal random variables $\xi_1,\ldots, \xi_n$, where $n\geq 2$.  Then, for all $k\in \{1,\ldots, n-1\}$,
$$
P(n,k) := \P[ \xi_{k:n}\leq \bar \xi \leq \xi_{k+1:n}] = \frac{1}{\sqrt{2\pi}} \binom{n}{k} \int_{-\infty}^{+\infty} \eee^{-x^2/2} \Phi^{k}\left(\frac{\ii x}{\sqrt{n}}\right) \Phi^{n-k}\left(-\frac{\ii x}{\sqrt{n}}\right) \dint x.
$$
\end{theorem}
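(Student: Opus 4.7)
The plan is to combine the two preceding theorems: Theorem~\ref{theo:frick_newman_pegden} (Frick--Newman--Pegden), which identifies $P(n,k)$ with a multiple of the Gaussian Sylvester probability $p_{d,m}^{\text{Gauss}}$, and Theorem~\ref{theo:sylvester_probab_types_gauss}, which gives an explicit integral representation of $p_{d,m}^{\text{Gauss}}$. Setting $n = d+2$ and $k = m+1$, the factor $\eta_{d,m}$ present in both formulas cancels, yielding the claimed identity.

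More precisely, first I would restrict attention to the range $k \in \{1, \ldots, \lfloor n/2 \rfloor\}$, which under the substitution $m = k - 1$, $d = n - 2$ corresponds exactly to $m \in \{0, 1, \ldots, \lfloor d/2 \rfloor\}$ (one checks that $\lfloor (n-2)/2 \rfloor + 1 = \lfloor n/2 \rfloor$ in both the even and odd cases). For such $k$, Theorem~\ref{theo:frick_newman_pegden} gives
\[
P(n,k) \;=\; \frac{1}{\eta_{d,m}}\, p_{d,m}^{\text{Gauss}},
\]
and plugging in the integral formula from Theorem~\ref{theo:sylvester_probab_types_gauss} produces exactly the asserted expression.

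Second, I would cover the remaining range $k \in \{\lfloor n/2 \rfloor + 1, \ldots, n-1\}$ by symmetry. The joint distribution of $(\xi_1, \ldots, \xi_n)$ is invariant under $\xi_i \mapsto -\xi_i$, which reverses the order statistics and negates the sample mean; hence $P(n,k) = P(n, n-k)$. On the other hand, the integral on the right-hand side of the claimed formula is invariant under $k \leftrightarrow n-k$: the substitution $x \mapsto -x$ interchanges $\Phi^k(\ii x/\sqrt{n})$ and $\Phi^{k}(-\ii x/\sqrt{n})$, and $\binom{n}{k} = \binom{n}{n-k}$. So the formula for $k > \lfloor n/2 \rfloor$ follows from the one for $n - k$.

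There is essentially no obstacle here, since this is a direct combination of two already-proved results; the only point to watch is the case $k = n/2$ when $n$ is even, where $\eta_{d,m} = 1$ rather than $2$, but this is precisely why the cancellation $p_{d,m}^{\text{Gauss}}/\eta_{d,m}$ produces the uniform expression for $P(n,k)$ with no dependence on the parity of $n$.
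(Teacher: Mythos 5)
Your proposal follows essentially the same route as the paper: combine Theorem~\ref{theo:frick_newman_pegden} with the integral formula of Theorem~\ref{theo:sylvester_probab_types_gauss} for $k\leq \lfloor n/2\rfloor$, and extend to the remaining $k$ via the symmetry $P(n,k)=P(n,n-k)$ together with the $x\mapsto -x$ invariance of the integral. The one point you omit is the edge case $n=2$: there $d=n-2=0$, and both geometric theorems you invoke concern polytopes in $\R^d$ with $d\geq 1$ (indeed, the expected-$f$-vector machinery behind Theorem~\ref{theo:sylvester_probab_types_gauss} does not apply to $d=0$), so the identity $P(2,1)=1$ must be checked against the integral directly; the paper does this by evaluating the right-hand side with the identity $\Phi(z)+\Phi(-z)=1$ and~\eqref{eq:int_gauss_density_gauss_distr_funct_complex_arg}, obtaining $1$. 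Apart from this minor missing case your argument is correct and matches the paper's.
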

\begin{proof}
Let first $n\geq 3$.  Then, \eqref{eq:frick_newman_pegden} with $d := n-2 \in \N$ and $m := k-1\in \{0,\ldots, d\}$ gives
$$
P(n,k) = \eta_{d, m}^{-1} \cdot  p_{d,m}^{\text{Gauss}}.
$$
If $m\leq \lfloor d/2 \rfloor$, the claimed formula follows from Theorem~\ref{theo:sylvester_probab_types_gauss}. For $m>\lfloor d/2 \rfloor$, it follows from the symmetry property $P(n,k) = P(n, n-k)$.

In case $n=2$, it is clear that $P(2,1)=1$, and we can use the identity  $\Phi(z) + \Phi(-z) = 1$ (twice) along with \eqref{eq:int_gauss_density_gauss_distr_funct_complex_arg} to evaluate the right-hand side as
\begin{align*}
\frac{2}{\sqrt{2\pi}} \int_{-\infty}^{+\infty} \eee^{-x^2/2} \Phi\left(\frac{\ii x}{\sqrt{2}}\right)\Phi\left(-\frac{\ii x}{\sqrt{2}}\right)\dint x &=\frac{2}{\sqrt{2\pi}} \int_{-\infty}^{+\infty} \eee^{-x^2/2} \Phi\left(\frac{\ii x}{\sqrt{2}}\right)\left(1- \Phi\left(\frac{\ii x}{\sqrt{2}}\right) \right)\dint x \\
&=\frac{2}{\sqrt{2\pi}} \int_{-\infty}^{+\infty} \eee^{-x^2/2} \Phi\left(\frac{\ii x}{\sqrt{2}}\right)\dint x\\
&=\frac{2}{\sqrt{2\pi}} \int_0^{+\infty} \eee^{-x^2/2} \left(\Phi\left(\frac{\ii x}{\sqrt{2}}\right)+\Phi\left(-\frac{\ii x}{\sqrt{2}}\right)\right)\dint x\\
&=\frac{2}{\sqrt{2\pi}} \int_0^{+\infty} \eee^{-x^2/2} \dint x=1.
\end{align*}
\end{proof}

\begin{remark}
The formula holds also for $k=0$ and $k=n$. Indeed, $P(n,0)= P(n,n) = 0$ since it is not possible that all $\xi_i$ are smaller/larger than their mean, while the right-hand side also becomes $0$ by~\eqref{eq:int_gauss_density_gauss_distr_funct_complex_arg}.
\end{remark}
\begin{remark}
Let us provide a heuristic explanation of Theorem~\ref{theo:youden_probability}. The random vector $(\xi_1-\bar \xi, \ldots, \xi_n -\bar \xi)$ is $n$-variate Gaussian with zero mean and covariance matrix
$$
\E [(\xi_i - \bar \xi) (\xi_j - \bar \xi)] =  \delta_{ij} - \frac 1n - \frac 1n + \frac 1n =  \delta_{ij} - \frac 1n, \qquad i,j \in \{1,\ldots, n\},
$$
where $\delta_{ij}$ is the Kronecker delta.
On the other hand, consider an $n$-variate Gaussian vector $(\eta_1 - \sigma \eta,\ldots, \eta_n - \sigma \eta)$, where $\eta, \eta_1,\ldots,\eta_n \sim {\rm N}(0,1)$ are independent standard Gaussian random variables and $\sigma$ is a constant. The covariance matrix of this vector is given by
$$
\E[(\eta_i - \sigma \eta)(\eta_j - \sigma \eta)] = \delta_{ij} + \sigma^2, \qquad i,j \in \{1,\ldots, n\}.
$$
If we choose $\sigma^2 = -1/n$ and ignore the fact that the variance cannot be negative, we can identify the random vector $(\xi_i - \bar \xi)_{i=1}^n$ with $(\eta_i - \sigma \eta)_{i=1}^n$. This leads to
\begin{align}
P(n,k)
&=
\binom nk \cdot \P[\xi_1 - \bar \xi \leq 0, \ldots, \xi_k - \bar \xi \leq 0, \xi_{k+1} - \bar \xi  \geq 0,\ldots,\xi_{n} - \bar \xi  \geq 0]
\\
&=
\binom nk \cdot \P[\eta_1 \leq \sigma \eta, \ldots,  \eta_k \leq \sigma \eta, \eta_{k+1} \geq \sigma \eta, \ldots, \eta_n \geq \sigma \eta].
\end{align}
Conditioning on $\eta = x$,  and integrating over $x$, we obtain
$$
P(n,k)
=
\binom nk \cdot \frac 1 {\sqrt {2\pi}} \int_{-\infty}^{+\infty} \eee^{-x^2/2} \Phi^{k}\left(\sigma x\right) \Phi^{n-k}\left(-\sigma x\right) \dint x.
$$
Now recall that $\sigma^2 = -1/n$. Inserting $\sigma = \ii/\sqrt n$ we obtain the claimed formula. Note that $\sigma = - \ii/\sqrt n$ would give the same formula after substituting $x$ by $-x$ in the integral.
\end{remark}

%Add  \cite{pivovarov_vol_thr_gauss_poly}  \cite{pivovarov_PHD} ??????

%Taking $\sigma=1$ in Theorem~\ref{theo:main1} yields the \textit{probability content} of the Gaussian polytope which is defined as
%$$
%C_{n,d} := \P[X \in \conv (X_1,\ldots,X_n)] = 1 - \P[X\notin \conv (X_1,\ldots,X_n)].
%$$
%For $d=2,3,4$ we obtain the formulas
%\begin{align*}
%C_{n,2} &= 1- \frac{n}{\sqrt{2\pi}}\int_{-\infty}^{+\infty} \Phi^{n-1} \left(\frac{x}{\sqrt{2}}\right) \eee^{-x^2/2} \dd x,\\
%C_{n,3} &= 1-
%\frac{n(n-1)}{6\sqrt{2\pi}}
%\int_{-\infty}^{+\infty} \Phi^{n-2} \left(\frac{x}{\sqrt 2}\right) \eee^{-x^2/2} \dd x - \frac {2}{n+1},\\
%C_{n,4} &= C_{n,2} -
%\frac{n(n-1)(n-2)}{3\sqrt{2\pi}} \left( \frac 18 - \frac 3{4\pi} \arcsin \frac 13\right)
%\int_{-\infty}^{+\infty} \Phi^{n-3} \left(\frac{x}{2}\right) \eee^{-x^2/2} \dd x.
%\end{align*}
%The formulas for $d=2,3$ were obtained by~\citet{efron_convex_hull_1965}, Equations (7.5) and (7.6) on p.~341.

\subsection{Beta-type random points}\label{subsec:beta_type_sylvester_refined}
In this section, we shall solve the refined Sylvester problem for i.i.d.\ points following a beta distribution or a beta prime distribution. These are defined as follows.
A random point in $\R^d$ is said to follow a $d$-dimensional \textit{beta distribution} with parameter $\beta>-1$ if its Lebesgue density is given by
\begin{equation}\label{eq:def_f_beta}
f_{d,\beta}(x)=c_{d,\beta} \left( 1-\left\| x \right\|^2 \right)^\beta\ind_{\{\|x\| <  1\}},\qquad x\in\R^d,\qquad
c_{d,\beta}= \frac{ \Gamma\left( \frac{d}{2} + \beta + 1 \right) }{ \pi^{ \frac{d}{2} } \Gamma\left( \beta+1 \right) }.
\end{equation}
Here, $\|x\| = (x_1^2+\dots +x_d^2)^{1/2}$ denotes the Euclidean norm of the vector $x = (x_1,\dots,x_d)\in \mathbb{R}^d$. The uniform distribution on the unit ball $\mathbb{B}^d$ is recovered by taking $\beta = 0$, whereas the uniform distribution on the unit sphere $\mathbb{S}^{d-1}$ is the weak limit of the beta distribution as $\beta\downarrow -1$. Accordingly, when we refer to random points in $\mathbb{R}^d$ with the ``beta density $f_{d,-1}$'', what we really mean is that they are uniform on $\mathbb{S}^{d-1}$.

A random point in $\R^d$ follows a $d$-dimensional \textit{beta prime distribution} with parameter $\beta>d/2$ if its Lebesgue density is given by
\begin{equation}\label{eq:def_f_beta_prime}
\widetilde{f}_{d,\beta}(x)=\widetilde{c}_{d,\beta} \left( 1+\left\| x \right\|^2 \right)^{-\beta},\qquad
x\in\R^d,\qquad
\widetilde{c}_{d,\beta}= \frac{ \Gamma\left( \beta \right) }{\pi^{ \frac{d}{2} } \Gamma\left( \beta - \frac{d}{2} \right)}.
\end{equation}
These distributions were introduced to stochastic geometry by~\citet{miles} and~\citet{ruben_miles}.  The beta, beta-prime, and isotropic normal distributions enjoy two key properties, invariance under projections and invariance under slicing~\cite[Chapter~2]{kabluchko_steigenberger_thaele_boob_beta_type}, which make them particularly amenable to computations in stochastic geometry; see~\cite{kabluchko_steigenberger_thaele_boob_beta_type}. We shall use the notation
\begin{equation}\label{eq:c_beta}
c_{\beta}
:=
c_{1,\beta}
=
\frac{ \Gamma\left(\beta + \frac{3}{2} \right) }{  \sqrt \pi\; \Gamma (\beta+1)},
\qquad
\widetilde c_{\beta}
:=
\widetilde{c}_{1,\beta}
=
\frac{ \Gamma(\beta)}{ \sqrt \pi\; \Gamma\left( \beta - \frac{1}{2}\right)}.
\end{equation}

\begin{theorem}[Probabilities of types for beta-distributed points]\label{theo:sylvester_probab_types_beta}
Let $X_1,\ldots, X_{d+2}$ be independent random points in $\R^d$ with beta density $f_{d,\beta}$, where $d\geq 2$ and $\beta \geq -1$.  Consider their convex hull $\cP_{d+2,d}^{\beta} = [X_1,\ldots, X_{d+2}]$. Then, for all $m\in \{0,1,\ldots, \lfloor d/2 \rfloor\}$,
\begin{align*}
p_{d,m}(\beta)
:&=
\P[\cP_{d+2,d}^{\beta} \text{ is of type } T_m^{d}]
\\
&=
\eta_{d,m} \binom {d+2}{m+1} \int_{-\infty}^{+\infty} \frac{c_{\frac{(2\beta +d)(d+2)}{2}} \left((G(x))^{d+1-m} (1-G(x))^{m+1} + (-1)^m (G(x))^{d+2}\right)}{(\cosh x)^{(2\beta+d)(d+2) +2}}
 \dint x,
%\left(\frac 12  - \ii \int_0^x  c_{\frac{2\beta+d-1}{2}} (\cos y)^{-2\beta - d - 1}\dd y \right)^{m+1}\dd x.
\end{align*}
where
\begin{equation}\label{eq:beta_distr_G_2beta+d}
G(x) = G_{2\beta + d} (x) =  \frac 12  + \ii \int_0^x  c_{\frac{2\beta+d-1}{2}} (\cosh y)^{2\beta + d}\dd y.
%\frac 12  + \ii \int_0^x  c_{\frac{2\beta+d-1}{2}} (\cos y)^{-2\beta - d - 1}\dd y.
\end{equation}
\end{theorem}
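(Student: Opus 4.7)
The strategy parallels that of Theorem~\ref{theo:sylvester_probab_types_gauss}, with the function $G_{2\beta+d}$ playing the role of $\Phi$, $\cosh$-weighted integrals replacing the Gaussian ones, and $2\beta+d$ serving as the canonical effective dimensional parameter for beta distributions. I first invoke Theorem~\ref{theo:sylvester_probab_types_trhough_expected_f_vectors} to write $p_{d,m}(\beta)$ as an alternating linear combination of the expected face numbers $\E f_\ell(\cP_{d+2,d}^\beta)$ for $\ell\in\{0,\ldots,m\}$, then apply the beta analog of the Affentranger--Schneider identity
$$
\E f_\ell(\cP_{d+2,d}^\beta) = \binom{d+2}{\ell+1} - 2\cdot \bJ_{d+2,\ell+1}^{\beta},
$$
where $\bJ_{d+2,k}^\beta$ denotes the appropriate internal angle sum of a beta-weighted regular simplex at its $(k-1)$-dimensional faces. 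The crucial analytic input is the integral representation
$$
\bJ_{d+2,k}^{\beta} = \binom{d+2}{k} \int_{-\infty}^{+\infty} \frac{c_{\frac{(2\beta+d)(d+2)}{2}}\, G(x)^{d+2-k}}{(\cosh x)^{(2\beta+d)(d+2)+2}}\,\dd x,
$$
the beta counterpart of~\eqref{eq:regular_simpl_internal}, which is known from the beta polytope literature and is proved via the projection formulas going back to Ruben--Miles combined with a standard $\sinh$-substitution.

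With these ingredients in place, the computation closely mirrors that in the proof of Theorem~\ref{theo:sylvester_probab_types_gauss}. Combining the two displays above and interchanging sum and integral yields
$$
p_{d,m}(\beta) = \eta_{d,m} \int_{-\infty}^{+\infty} \frac{c_{\frac{(2\beta+d)(d+2)}{2}}}{(\cosh x)^{(2\beta+d)(d+2)+2}} \sum_{\ell=0}^m (-1)^{m+\ell}\binom{d-\ell+1}{d-m+1}\binom{d+2}{\ell+1} G(x)^{d+1-\ell}\,\dd x.
$$
Applying Lemma~\ref{lem:probab_types_proof_identity_binomial_coeff} with $z = G(x)$, $1-z = 1 - G(x)$, collapses the inner alternating sum to
$$
\binom{d+2}{m+1}\Bigl(G(x)^{d+1-m}(1-G(x))^{m+1} + (-1)^m G(x)^{d+2}\Bigr),
$$
whence the asserted formula follows immediately.

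\textbf{Main obstacle.} The substantive step is identifying and verifying the integral representation for $\bJ_{d+2,k}^{\beta}$: the precise combination of the constant $c_{(2\beta+d)(d+2)/2}$, the $\cosh$-exponent $(2\beta+d)(d+2)+2$, and the function $G_{2\beta+d}$ must be tracked carefully. The effective parameter $2\beta+d$ arises because one-dimensional orthogonal projections of the $d$-variate beta density~\eqref{eq:def_f_beta} have a one-dimensional beta-type law with shifted parameter $\beta+(d-1)/2$; the factor $d+2$ then enters through the number of vertices, and the $\sinh$-substitution transforms the resulting rational integrands into the displayed $\cosh$-power form. In contrast with the Gaussian case, no vanishing identity analogous to~\eqref{eq:int_gauss_density_gauss_distr_funct_complex_arg} is invoked here, so the $(-1)^m G(x)^{d+2}$ term is not absorbed and survives in the final answer.
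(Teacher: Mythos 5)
Your overall route is exactly the paper's: express $p_{d,m}(\beta)$ via Theorem~\ref{theo:sylvester_probab_types_trhough_expected_f_vectors}, substitute the identity $\binom{d+2}{\ell+1}-\E f_\ell(\cP_{d+2,d}^\beta)=2\,\bJ_{d+2,\ell+1}(\beta-\tfrac12)$ together with the integral representation~\eqref{eq:J_nk_integral} of the expected internal angle sums of beta simplices (with $n=d+2$ and $\alpha=2\beta+d$), and then collapse the alternating sum with Lemma~\ref{lem:probab_types_proof_identity_binomial_coeff} applied to $z=G(x)$. Your observation that, unlike the Gaussian case, the term $(-1)^m G(x)^{d+2}$ cannot be discarded is also correct and is made explicitly in the paper.

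There is, however, one genuine gap: the parameter range. The integral representation you invoke for the angle sums is established in the cited literature only under the constraint $\alpha\ge n-3$, which with $n=d+2$ and $\alpha=2\beta+d$ amounts to $\beta\ge-\tfrac12$; the same restriction applies to the formula expressing $\E f_\ell(\cP^\beta_{d+2,d})$ through $\bJ_{d+2,\ell+1}(\beta-\tfrac12)$. Your argument therefore proves the theorem only for $\beta\ge-\tfrac12$, whereas the statement claims it for all $\beta\ge-1$, and the endpoint $\beta=-1$ (uniform distribution on the sphere) is one of the paper's highlighted applications. The paper closes this gap by analytic continuation: $p_{d,m}(\beta)$, written as an integral of a product of beta densities over the set of configurations of type $T_m^d$, extends analytically to $\{\Re\beta>-1\}$; the integral on the right-hand side is analytic in $\beta$ on a half-plane containing $(-1,\infty)$; hence the identity persists for all $\beta>-1$ by the identity theorem, and $\beta=-1$ follows by continuity. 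Without some such argument your proof does not cover $-1\le\beta<-\tfrac12$.
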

\begin{proof}
The proof relies on a formula for $\E f_\ell(\cP_{d+2,d}^\beta)$ obtained in~\cite{beta_polytopes} and~\cite{kabluchko_angles_explicit_formula}. To state it, we need to introduce some notation.
Let $Z_1,\ldots,Z_{n}$ be i.i.d.\ random points in $\R^{n-1}$ following the beta distribution $f_{n-1,\beta}$, where $\beta \geq  -1$.   The convex hull $[Z_1,\ldots,Z_n]$ is called an $(n-1)$-dimensional \textit{beta simplex}.
By the definition given in Section~\ref{subsec:cones_angles_defs},  the tangent cone of $[Z_1,\ldots,Z_n]$ at its face $[Z_1,\ldots, Z_k]$ is given by
$$
T([Z_1,\ldots, Z_k], [Z_1,\ldots, Z_n]) = \pos\left\{Z_i - \frac{Z_1+\ldots + Z_k}{k}: i=1,\ldots, n\right\}.
$$
The expected sum of internal angles at $k$-vertex faces of $[Z_1,\ldots, Z_n]$ is denoted by
\begin{equation}
\bJ_{n,k}(\beta) := \binom {n}{k} \cdot  \E \alpha\left(T([Z_1,\ldots, Z_k], [Z_1,\ldots, Z_n])\right),
\end{equation}
for all integer $n\geq 2$ and $k\in \{1,\ldots,n\}$.  An explicit formula for the quantities $\bJ_{n,k}(\beta)$ has been obtained in~\cite[Theorem~1.2 and Eqn.~(1.7)]{kabluchko_angles_explicit_formula}:
\begin{equation}\label{eq:J_nk_integral}
\bJ_{n,k}\left(\frac{\alpha - n + 1}{2}\right)
=
\binom nk \int_{-\infty}^{+\infty} c_{\frac{\alpha n}2} (\cosh x)^{-\alpha n - 2}
\left(\frac 12  + \ii \int_0^x  c_{\frac{\alpha-1}{2}} (\cosh y)^{\alpha}\dd y \right)^{n-k} \dd x,
\end{equation}
for all integer $n\geq 3$, $k\in \{1,\ldots,n\}$ and $\alpha \geq n-3$.

The formula for the expected $f$-vector of $\cP_{d+2,d}^\beta$ given in~\cite[Theorem 1.2]{beta_polytopes}  simplifies (for $n=d+2$ points and taking into account relations stated in~\cite[Proposition~2.1]{kabluchko_recursive_scheme}) to
$$
\E f_\ell(\cP_{d+2,d}^\beta)
=
\binom{d+2}{\ell+1} - 2 \cdot \bJ_{d+2,\ell+1} \left(\beta - \frac 12\right), \qquad \beta \geq -\frac 12.
$$
So, let us first suppose that $\beta \geq -1/2$. It follows from~\eqref{eq:J_nk_integral} with $n=d+2$ and $\alpha = 2\beta + d$ that
\begin{align*}
\binom{d+2}{\ell+1} - \E f_\ell(\cP_{d+2,d}^\beta)
&=
2\cdot \bJ_{d+2,\ell+1} \left(\beta - \frac 12\right)
\\
&=
2\cdot \binom {d+2}{\ell+1} \int_{-\infty}^{+\infty} \frac{c_{\frac{(2\beta +d)(d+2)}{2}}}{(\cosh x)^{(2\beta+d)(d+2) + 2}}
\left( G (x)\right)^{d+1-\ell} \dd x,
\end{align*}
where $G(x)$ is as in~\eqref{eq:beta_distr_G_2beta+d}.
This formula, combined with Theorem~\ref{theo:sylvester_probab_types_trhough_expected_f_vectors} gives that for all $m\in \{0,1,\ldots, \lfloor d/2 \rfloor\}$,
\begin{align*}
p_{d,m}(\beta)
&=
\frac 12 \eta_{d,m} \cdot \sum_{\ell=0}^{m} (-1)^{m+\ell} \binom{d-\ell+1}{d-m+1}
\left(\binom{d+2}{\ell+1} - \E f_\ell(\cP_{d+2,d}^\beta)\right)
\\
&=
\eta_{d,m} \cdot \sum_{\ell=0}^{m} (-1)^{m+\ell} \binom{d-\ell+1}{d-m+1}
\binom {d+2}{\ell+1}
\int_{-\infty}^{+\infty} \frac{c_{\frac{(2\beta +d)(d+2)}{2}}}{(\cosh x)^{(2\beta+d)(d+2) + 2}}
\left( G (x)\right)^{d+1-\ell} \dd x
\\
&=
\eta_{d,m} \cdot
\int_{-\infty}^{+\infty} \frac{c_{\frac{(2\beta +d)(d+2)}{2}}}{(\cosh x)^{(2\beta+d)(d+2) + 2}}
\left(\sum_{\ell=0}^{m} (-1)^{m+\ell} \binom{d-\ell+1}{d-m+1}
\binom {d+2}{\ell+1} \left(G (x)\right)^{d+1-\ell}\right) \dd x.
\end{align*}
Transforming the sum by means of Lemma~\ref{lem:probab_types_proof_identity_binomial_coeff} gives
\begin{align*}
p_{d,m}(\beta)
&=
\eta_{d,m} \cdot
\binom{d+2}{m+1} \int_{-\infty}^{+\infty} \frac{c_{\frac{(2\beta +d)(d+2)}{2}}}{(\cosh x)^{(2\beta+d)(d+2) + 2}}
\left((G(x))^{d+1-m} (1-G(x))^{m+1} + (-1)^m (G(x))^{d+2}\right)
\dd x.
\end{align*}
To complete the proof of the theorem for $\beta\geq -1/2$, observe that  $1-G(x) = G(-x)$  by~\eqref{eq:beta_distr_G_2beta+d}.

To extend the result to the range $\beta > -1$, we argue by analytic continuation as in~\cite[Remark~2.5]{gusakova_kabluchko_sylvester_beta}. The function $p_{d,m}(\beta)$, originally defined for $\beta> -1$,  admits an analytic continuation to the half-plane $\{\Re \beta >-1\}$. To see this, write $p_{d,m}(\beta) = \int_{D_m} f_{d,\beta}(x_1) \ldots f_{d,\beta} (x_{d+2}) \dint x_1 \dots \dint x_{d+2}$, where $D_m = \{(x_1,\dots, x_{d+2})\in (\bB^d)^{d+2}: [x_1,\ldots, x_{d+2}] \text{ is of type } T_{d}^m\}$, and apply Lemma~4.3 in~\cite{beta_polytopes}. On the other hand, the double integral in Theorem~\ref{theo:sylvester_probab_types_beta} defines an analytic function of $\beta$ on $\{\Re \beta > -\frac d2 - \frac 1{d+2}\}$. By the identity theorem for analytic functions, the double-integral formula for $p_{d,m}(\beta)$ continues to hold for all $\beta >-1$. The case $\beta= -1$ follows by continuity.
%(For real $x$, this follows from~\eqref{eq:beta_distr_G_2beta+d}, for complex $x$ from  the identity theorem for analytic functions).
\end{proof}

\begin{remark}
The formula for $p_{d,m}(\beta)$ stated in Theorem~\ref{theo:sylvester_probab_types_beta} is structurally similar to formula~\eqref{eq:p_d_m_Gauss_complicated_formula}. We were able to simplify the latter formula using~\eqref{eq:int_gauss_density_gauss_distr_funct_complex_arg}.
Unfortunately, an
analogue of~\eqref{eq:int_gauss_density_gauss_distr_funct_complex_arg} is not true in the beta setting: In general,
$\int_{-\infty}^{+\infty} \frac{(G(x))^{d+2}}{(\cosh x)^{(2\beta+d)(d+2) + 2}}
\dd x$ need not vanish.   %Simulations show: it vanishes if $d$ is odd and beta is integer or half-integer.
\end{remark}

\begin{example}[Uniform distribution on the sphere]
Let $X_1,\ldots, X_{d+2}$ be i.i.d.\ points uniformly distributed on the unit sphere $\bS^{d-1}$, which corresponds to the case $\beta=-1$.
The usual Sylvester problem becomes trivial: we have $p_{d,0}(-1) = 0$ since no point $X_i$ can be inside the convex hull of the remaining points. The refined Sylvester problem, however, is non-trivial. The values of $p_{d,\bullet}(-1) = (p_{d,m}(-1))_{m=0}^{\lfloor d/2\rfloor}$ for small $d$ are given in the following table:
\begin{align*}
p_{3, \bullet}(-1) &= \left(0, 1\right),
\\
p_{4, \bullet}(-1) &= \left(0,\frac{3289}{240 \pi ^2}-1,2-\frac{3289}{240 \pi ^2}\right),
\\
p_{5, \bullet}(-1) &= \left(0,\frac{81}{646},\frac{565}{646}\right),
\\
p_{6, \bullet}(-1) &= \left(0,\frac{4}{3}+\frac{3033926553841}{92461824000 \pi ^4}-\frac{14969498023}{927566640 \pi ^2},-\frac{3033926553841}{23115456000 \pi ^4}-8+\frac{14969498023}{154594440 \pi ^2},\right.\\
&\qquad \qquad  \qquad \left.\frac{23}{3}+\frac{3033926553841}{30820608000 \pi ^4}-\frac{14969498023}{185513328 \pi ^2}\right),
\\
p_{7, \bullet}(-1) &=\left(0,\frac{510000}{58642669},\frac{11730250}{58642669},\frac{46402419}{58642669}\right).
\end{align*}
\end{example}

\begin{example}[Uniform distribution in the ball]
Let $X_1,\ldots, X_{d+2}$ be i.i.d.\ points uniformly distributed on the unit ball $\bB^{d}$, which corresponds to the case $\beta=0$. The formula for $p_{d,0}(0)$ is due to Kingman; see~\eqref{eq:kingman_sylvester_simplex}. The values of $p_{d,\bullet}(0) = (p_{d,m}(0))_{m=0}^{\lfloor d/2\rfloor}$ for small $d$ are given in the following table:
\begin{align*}
p_{3, \bullet}(0) &= \left(\frac{9}{143}, \frac{134}{143}\right),
\\
p_{4, \bullet}(0) &=\left(\frac{676039}{648000 \pi ^4},-\frac{676039}{162000 \pi ^4}-1+\frac{13453919}{931392 \pi ^2},\frac{676039}{216000 \pi ^4}+2-\frac{13453919}{931392 \pi ^2}\right),
\\
p_{5, \bullet}(0) &= \left(\frac{20000}{12964479},\frac{20500}{139403},\frac{11037979}{12964479}\right).
\end{align*}

\end{example}

\begin{theorem}[Probabilities of types for beta-prime-distributed points]\label{theo:sylvester_probab_types_beta_prime}
Let $\widetilde X_1,\ldots, \widetilde X_{d+2}$ be independent random points in $\R^d$ with beta prime density $\widetilde f_{d,\beta}$, where $2\beta> d + \frac 1 {d+2}$.     Consider their convex hull $\widetilde \cP_{d+2,d}^{\beta} = [\widetilde X_1,\ldots, \widetilde X_{d+2}]$. Then, for all $m\in \{0,1,\ldots, \lfloor d/2 \rfloor\}$,
\begin{align*}
\widetilde p_{d,m}(\beta)
:&=
\P[\widetilde{\cP}_{d+2,d}^{\beta} \text{ is of type } T_m^{d}]
\\
&=
\eta_{d,m} \cdot
\binom{d+2}{m+1} \int_{-\infty}^{+\infty} \frac{\widetilde c_{\frac{(2\beta - d)(d+2)}{2}} \left((\widetilde G(x))^{d+1-m} (1-\widetilde G(x))^{m+1} + (-1)^m (\widetilde  G(x))^{d+2}\right)}{(\cosh x)^{(2\beta - d)(d+2) - 1}}
\dd x,
\end{align*}
where
\begin{equation}\label{eq:beta_distr_G_tilde_2beta-d}
\widetilde G(x) = \widetilde G_{2\beta-d} (x) =  \frac 12  + \ii \int_0^x  \widetilde c_{\frac{2\beta-d+1}{2}} (\cosh y)^{2\beta - d - 1}\dd y.
\end{equation}
\end{theorem}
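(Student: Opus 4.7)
I would follow the proof of Theorem~\ref{theo:sylvester_probab_types_beta} line by line, replacing each beta ingredient by its beta-prime counterpart. The starting point is the expected $f$-vector formula for beta prime polytopes established in~\cite{beta_polytopes}, which --- together with the simplifications recorded in~\cite[Proposition~2.1]{kabluchko_recursive_scheme} --- writes
\begin{equation*}
\E f_\ell(\widetilde\cP_{d+2,d}^\beta) = \binom{d+2}{\ell+1} - 2\,\widetilde\bJ_{d+2,\ell+1}(\cdot),
\end{equation*}
where $\widetilde\bJ_{n,k}(\cdot)$ denotes the expected sum of internal angles of a beta prime simplex at its $k$-vertex faces, with the appropriate parameter depending on $\beta$.

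The second step is to substitute the explicit integral representation for $\widetilde\bJ_{n,k}$ supplied by~\cite{kabluchko_angles_explicit_formula}, i.e.\ the beta prime analogue of~\eqref{eq:J_nk_integral}. After setting $n=d+2$ and tracking the parameter through, the density and contour factors are precisely those appearing in the statement: $\widetilde c_{(2\beta-d)(d+2)/2}(\cosh x)^{-(2\beta-d)(d+2)+1}$ on the outside and $\widetilde G(x)^{d+1-\ell}$ on the inside, with $\widetilde G$ as in~\eqref{eq:beta_distr_G_tilde_2beta-d}. Plugging this into Theorem~\ref{theo:sylvester_probab_types_trhough_expected_f_vectors}, exchanging the finite sum over $\ell$ with the $x$-integral, and applying Lemma~\ref{lem:probab_types_proof_identity_binomial_coeff} with $z = \widetilde G(x)$ collapses the polynomial expression inside the integral to the factor
\begin{equation*}
\binom{d+2}{m+1}\left(\widetilde G(x)^{d+1-m}(1-\widetilde G(x))^{m+1} + (-1)^m \widetilde G(x)^{d+2}\right),
\end{equation*}
which is exactly what the theorem asserts. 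The identity $1-\widetilde G(x) = \widetilde G(-x)$, immediate from~\eqref{eq:beta_distr_G_tilde_2beta-d} together with the oddness of $\sinh$, confirms full compatibility with the beta case.

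The main subtlety --- more a caveat than a true obstacle --- is that, in contrast to the Gaussian setting and consistent with the Remark following Theorem~\ref{theo:sylvester_probab_types_beta}, there is no analogue of~\eqref{eq:int_gauss_density_gauss_distr_funct_complex_arg} available to kill the term $(\widetilde G(x))^{d+2}$; it genuinely survives in the final answer. The hypothesis $2\beta > d + \tfrac{1}{d+2}$ is exactly what makes $\widetilde c_{(2\beta-d)(d+2)/2}$ well-defined and what ensures the outer factor $(\cosh x)^{-(2\beta-d)(d+2)+1}$ decays rapidly enough to absorb the at-most-exponential growth of $\widetilde G(x)^{d+2}$, guaranteeing absolute convergence and legitimizing the sum-integral exchange. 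Unlike the beta case, no analytic-continuation argument is needed: the range of validity coincides with the domain on which both sides are defined.
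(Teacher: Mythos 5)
Your proposal is correct and follows essentially the same route as the paper: the expected $f$-vector formula for beta prime polytopes reduced to $2\widetilde\bJ_{d+2,\ell+1}(\beta+\tfrac12)$, substitution of the explicit integral representation from~\cite{kabluchko_angles_explicit_formula} with $n=d+2$ and $\alpha=2\beta-d$, insertion into Theorem~\ref{theo:sylvester_probab_types_trhough_expected_f_vectors}, and collapse of the $\ell$-sum via Lemma~\ref{lem:probab_types_proof_identity_binomial_coeff} with $z=\widetilde G(x)$. Your observations that the $(\widetilde G(x))^{d+2}$ term genuinely survives, that $2\beta>d+\tfrac1{d+2}$ is exactly the condition $\alpha>1/n$ needed for the angle formula (and for $\widetilde c_{(2\beta-d)(d+2)/2}$ to make sense), and that no analytic continuation step is required here, all match the paper.
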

\begin{proof}

Similarly to the beta case, let $\widetilde Z_1,\ldots,\widetilde Z_{n}$ be independent random points in $\R^{n-1}$ sampled from the beta prime distribution $\widetilde f_{n-1,\beta}$, where $\beta > (n-1)/2$. Their convex hull $[\widetilde Z_1,\ldots,\widetilde Z_n]$ is called the $(n-1)$-dimensional \textit{beta prime simplex}. Its expected internal angles sum at all $k$-vertex faces is denoted by
\begin{equation}
\widetilde \bJ_{n,k}(\beta)  := \binom {n}{k} \cdot  \E \alpha\left(\pos\left\{\widetilde Z_i - \frac{\widetilde Z_1+\ldots + \widetilde Z_k}{k}: i=1,\ldots, n\right\}\right).
\end{equation}
for all integer $n\geq 2$ and $k\in \{1,\ldots,n\}$.  %By convention,  $\widetilde \bJ_{n,n}(\beta) = 1$ for all $n\in\N$.
An explicit formula for the quantities $\widetilde \bJ_{n,k}(\beta)$ has been derived in~\cite[Theorem~1.7 and Eqn.~(1.16)]{kabluchko_angles_explicit_formula}: For all $n\in\N$, $k\in \{1,\ldots,n\}$  and $\alpha > 1/n$ we have
\begin{equation}\label{eq:J_nk_tilde_integral}
\widetilde \bJ_{n,k}\left(\frac{\alpha + n - 1}{2}\right)
=
\binom nk \int_{-\infty}^{+\infty} \widetilde c_{\frac{\alpha n}2} (\cosh x)^{-(\alpha n - 1)} \left(\frac 12  + \ii \int_0^x \widetilde c_{\frac{\alpha+1}{2}}(\cosh y)^{\alpha-1}\dd y \right)^{n-k} \dd x.
\end{equation}

The formula for the expected $f$-vector of $\widetilde \cP_{d+2,d}^\beta$ given in~\cite[Theorem 1.14]{beta_polytopes}  simplifies (for $n=d+2$ points and taking into account relations stated in~\cite[Proposition~2.1]{kabluchko_recursive_scheme}) to

\begin{align*}
\E f_\ell( \widetilde \cP_{d+2,d}^\beta)
&=
\binom{d+2}{\ell+1} - 2 \cdot \widetilde \bJ_{d+2,\ell+1} \left(\beta + \frac 12\right),  \qquad \beta > \frac d2.
\end{align*}
It follows from~\eqref{eq:J_nk_tilde_integral} with $n=d+2$ and $\alpha = 2\beta - d$ that
\begin{align*}
\binom{d+2}{\ell+1} - \E f_\ell(\widetilde \cP_{d+2,d}^\beta)
&=
2\cdot \widetilde \bJ_{d+2,\ell+1} \left(\beta + \frac 12\right)
\\
&=
2\cdot \binom {d+2}{\ell+1} \int_{-\infty}^{+\infty} \frac{\widetilde c_{\frac{(2\beta - d)(d+2)}{2}}}{(\cosh x)^{(2\beta - d)(d+2) - 1}}
( \widetilde G (x))^{d+1-\ell} \dd x,
\end{align*}
where $\widetilde G(x)$ is as in~\eqref{eq:beta_distr_G_tilde_2beta-d} and the last formula requires the condition $2\beta> d + \frac 1 {d+2}$. Combining this formula with Theorem~\ref{theo:sylvester_probab_types_trhough_expected_f_vectors} gives that for all $m\in \{0,1,\ldots, \lfloor d/2 \rfloor\}$,
\begin{align*}
\widetilde p_{d,m}(\beta)
&=
\frac 12 \eta_{d,m} \cdot \sum_{\ell=0}^{m} (-1)^{m+\ell} \binom{d-\ell+1}{d-m+1}
\left(\binom{d+2}{\ell+1} - \E f_\ell(\widetilde \cP_{d+2,d}^\beta)\right)
\\
&=
\eta_{d,m} \cdot \sum_{\ell=0}^{m} (-1)^{m+\ell} \binom{d-\ell+1}{d-m+1}
\binom {d+2}{\ell+1}
\int_{-\infty}^{+\infty} \frac{\widetilde c_{\frac{(2\beta-d)(d+2)}{2}}}{(\cosh x)^{(2\beta-d)(d+2) - 1}}
\left( \widetilde G (x)\right)^{d+1-\ell} \dd x
\\
&=
\eta_{d,m} \cdot
\int_{-\infty}^{+\infty} \frac{\widetilde c_{\frac{(2\beta - d)(d+2)}{2}}}{(\cosh x)^{(2\beta-d)(d+2) - 1}}
\left(\sum_{\ell=0}^{m} (-1)^{m+\ell} \binom{d-\ell+1}{d-m+1}
\binom {d+2}{\ell+1} (\widetilde G (x))^{d+1-\ell}\right) \dd x.
\end{align*}
Transforming the sum by means of Lemma~\ref{lem:probab_types_proof_identity_binomial_coeff} gives
\begin{align*}
\widetilde p_{d,m}(\beta)
&=
\eta_{d,m} \cdot
\binom{d+2}{m+1} \int_{-\infty}^{+\infty} \frac{\widetilde c_{\frac{(2\beta - d)(d+2)}{2}}}{(\cosh x)^{(2\beta - d)(d+2) - 1}}
\left((\widetilde G(x))^{d+1-m} (1-\widetilde G(x))^{m+1} + (-1)^m (\widetilde  G(x))^{d+2}\right)
\dd x.
\end{align*}
To complete the proof, note that $1-\widetilde G(x) = \widetilde G(-x)$ by~\eqref{eq:beta_distr_G_tilde_2beta-d}.
%Let $n\geq 3$ be integer and $k\in \{1,\ldots,n\}$.  For all $\alpha \geq n-3$ we have
%\begin{equation}\label{eq:J_nk_integral}
%\bJ_{n,k} \left(\frac{\alpha - n + 1}{2}\right)
%=
%\binom nk \int_{-\pi/2}^{+\pi/2} c_{\frac{\alpha n}2} (\cos x)^{\alpha n + 1}
%\left(\frac 12  + \ii \int_0^x  c_{\frac{\alpha-1}{2}} (\cos y)^{-\alpha-1}\dd y \right)^{n-k} \dd x.
%\end{equation}
\end{proof}
\begin{remark}
The formula for $\widetilde p_{d,m}(\beta)$ stated in Theorem~\ref{theo:sylvester_probab_types_beta_prime} is structurally similar to formula~\eqref{eq:p_d_m_Gauss_complicated_formula}, which was simplified using~\eqref{eq:int_gauss_density_gauss_distr_funct_complex_arg}. Unfortunately, a beta prime  analogue of~\eqref{eq:int_gauss_density_gauss_distr_funct_complex_arg} is not true: In general,
$\int_{-\infty}^{+\infty} \frac{(\widetilde G(x))^{d+2}}{(\cosh x)^{(2\beta - d)(d+2) - 1}}
\dd x$ need not vanish.
\end{remark}

\begin{example}[Sylvester problem for beta-type distributions]
For $m=0$ Theorems~\ref{theo:sylvester_probab_types_beta} and~\ref{theo:sylvester_probab_types_beta_prime} give the formulas $p_{d,0}(\beta) =  2 \cdot \bJ_{d+2,1} (\beta - \frac 12)$ and $\widetilde p_{d,0}(\beta) = 2 \cdot \widetilde \bJ_{d+2,1} (\beta + \frac 12)$ obtained in~\cite{gusakova_kabluchko_sylvester_beta}.
\end{example}

\begin{example}[Cauchy distribution]\label{ex:refined_sylvester_cauchy_distr}
The beta prime distribution $\tilde f_{d,\beta}$ with $\beta = \frac{d+1}2$ is the $d$-dimensional Cauchy distribution. In this case,  $\widetilde{G}(x) = \frac 12 + \frac{\ii}{\pi} x$ and Theorem~\ref{theo:sylvester_probab_types_beta_prime} gives
\begin{equation}\label{eq:refined_sylvester_for_cauchy_distr}
\widetilde p_{d,m}\left(\frac{d+1}{2}\right) =
\eta_{d,m} \cdot
\binom{d+2}{m+1} \frac{\widetilde c_{\frac{d+2}{2}}}{\pi^{d+2}}
\int_{-\infty}^{+\infty}
\frac{\left(\frac \pi2 + \ii x\right)^{d+2}}{(\cosh x)^{d+1}}
\left(\left(\frac{\frac \pi 2 - \ii x}{\frac \pi 2 + \ii x}\right)^{m+1} + (-1)^m\right)
\dd x.
\end{equation}
For small dimensions $d$ this gives
\begin{align*}
\widetilde p_{3, \bullet}(2) &= \left(\frac{5}{\pi ^2}-\frac{1}{3},\frac{4}{3}-\frac{5}{\pi ^2}\right),
\\
\widetilde p_{4, \bullet}(5/2) &= \left(\frac{80}{\pi ^4}+6-\frac{200}{3 \pi ^2},-\frac{320}{\pi ^4}-30+\frac{1000}{3 \pi ^2},\frac{240}{\pi ^4}+25-\frac{800}{3 \pi ^2}\right),
\\
\widetilde p_{5, \bullet}(3) &= \left(\frac{1}{3}+\frac{105}{8 \pi ^4}-\frac{35}{8 \pi ^2},-\frac{315}{8 \pi ^4}-2+\frac{105}{4 \pi ^2},\frac{8}{3}+\frac{105}{4 \pi ^4}-\frac{175}{8 \pi ^2}\right),
\\
\widetilde p_{6, \bullet}(7/2) &=\left(-\frac{1568}{3 \pi ^4}+\frac{896}{5 \pi ^6}-34+\frac{29008}{75 \pi ^2},\frac{10976}{3 \pi ^4}-\frac{5376}{5 \pi ^6}+238-\frac{203056}{75 \pi ^2},\right.\\
&\qquad \qquad \qquad \left.-\frac{29792}{3 \pi ^4}+\frac{2688}{\pi ^6}-658+\frac{1682464}{225 \pi ^2},\frac{20384}{3 \pi ^4}-\frac{1792}{\pi ^6}+455-\frac{232064}{45 \pi ^2}\right).
\end{align*}
The Cauchy distribution on $\R^d$ can be mapped to the uniform distribution on the upper half-sphere $\bS^d_+ = \{(x_0,\ldots, x_d)\in \bS^d: x_0\geq 0\}$ via the gnomonic projection; see, e.g., \cite[Section~2.2.1]{convex_hull_sphere}. After gnomonic projection, the result becomes:  If $U_1,\ldots, U_{d+2}$ are independent and uniformly distributed on $\bS^d_+$, then their spherical hull is of type $T_{m}^d$ (meaning that it has $(m+1)(d+1-m)$ facets) with the  probability given in~\eqref{eq:refined_sylvester_for_cauchy_distr}. This refines the analogue of the Sylvester problem on $\bS^d_+$ which has been solved in~\cite[Section~2.4]{kabluchko_poisson_zero}.
\end{example}

%Now we are able to compute the Sylvester probability for  beta and beta prime distributions.
%\begin{theorem}[Sylvester problem for beta and beta prime distributions]\label{theo:sylvester_beta}
%Let $X_1,\ldots, X_{d+2}$ be i.i.d. with the beta distribution $f_{d,\beta}$ for some $\beta \geq -1/2$. Then,
%\begin{equation}\label{eq:sylvester_beta}
%p_{d,0}(\beta) := \P[[X_1,\ldots, X_{d+2}] \text{ is a simplex}\,] = (d+2)\cdot \P[X_{d+2} \in [X_1,\ldots, X_{d+1}]] =  2 \cdot \bJ_{d+2,1} \left(\beta - \frac 12\right).
%\end{equation}
%Similarly, if $\widetilde X_1,\ldots, \widetilde X_{d+2}$ are i.i.d.\ with the beta prime distribution $\widetilde f_{d,\beta}$ for some $\beta >d/2$, then
%\begin{equation}\label{eq:sylvester_beta_prime}
%\widetilde p_{d,0}(\beta) :=  \P[[\widetilde X_1,\ldots, \widetilde X_{d+2}] \text{ is a simplex}\,] = (d+2)\cdot \P[\widetilde X_{d+2} \in [\widetilde X_1,\ldots, \widetilde X_{d+1}]] = 2 \cdot \widetilde \bJ_{d+2,1} \left(\beta + \frac 12\right).
%\end{equation}
%\end{theorem}

\subsection{Convex hulls of random walks}\label{subsec:convex_hulls_RW_sylvester_refined}
Let $\xi_1,\dots,\xi_n$ be (possibly dependent) random $d$-dimensional vectors with partial sums
$$
S_i = \xi_1 + \dots + \xi_i,\quad  1\leq i\leq n,\quad  S_0=0.
$$
The sequence $S_0,S_1,\dots,S_n$ will be referred to as \emph{random walk} or, if additionally the condition $S_n=0$ holds, a \emph{random bridge}.  Consider its convex hull
\begin{align*}
\cQ_{n,d} &:= \conv(S_0,S_1,\ldots, S_n).
%\\&=\{\alpha_0 S_0+\ldots+\alpha_n S_n \colon \alpha_0,\ldots,\alpha_n \geq 0, \alpha_0+\ldots+\alpha_n =1\}.
\end{align*}
To state our result, we need to impose the following assumptions on the joint distribution of  the increments. %$(\xi_1,\dots,\xi_n)$:
\begin{enumerate}
\item[$(\text{Ex})$] \textit{Exchangeability:} For every permutation $\sigma= (\sigma(1),\ldots, \sigma(n))$ of the set $\{1,\ldots,n\}$, we have the distributional equality
$$
(\xi_{\sigma(1)},\ldots,  \xi_{\sigma(n)}) \eqdistr (\xi_1,\ldots,\xi_n).
$$
\item[$(\text{GP})$] \textit{General position:}
For every $1\leq i_1 < \ldots < i_d\leq n$, the probability that the vectors $S_{i_1}, \ldots,S_{i_d}$ are linearly dependent is $0$.
\end{enumerate}

Here we shall be interested in the case when $n= d+1$. The next theorem provides a formula for the probabilities of types of $\cQ_{d+1,d}$ in terms of the Eulerian numbers.

\begin{theorem}[Probabilities of types for convex hulls of random walks]\label{theo:sylvester_probab_types_conv_RW}
Under the above assumptions, for all $m\in \{0,1,\ldots, \lfloor d/2 \rfloor\}$,
$$
p_{d,m}^{\text{convRW}} := \P[\cQ_{d+1,d} \text{ is of type } T_m^{d}]
=
\eta_{d,m} \cdot \frac{\eulerian{d+1}{m}}{(d+1)!},
$$
where  $\eulerian{n}{k}$ is an Eulerian number counting permutations of $\{1,\ldots, n\}$ in which exactly $k$ elements are greater than the previous element, $k\in \{0,\ldots, n-1\}$.
\end{theorem}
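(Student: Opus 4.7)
The plan is to exploit exchangeability to reduce the problem to a deterministic counting question, and then solve that question via a cyclic-descent identity. By assumption~(Ex), for every permutation $\sigma$ of $\{1,\ldots,d+1\}$ the permuted walk $S_0^\sigma:=0$, $S_i^\sigma := \xi_{\sigma(1)}+\cdots+\xi_{\sigma(i)}$ has the same joint distribution as $(S_0,\ldots,S_{d+1})$, and so the same probability of having each combinatorial type. Averaging over $\sigma$ yields
\[
p_{d,m}^{\text{convRW}}\;=\;\frac{1}{(d+1)!}\, \E\bigl|\{\sigma:\, \conv(S_0^\sigma,\ldots, S_{d+1}^\sigma)\text{ is of type }T_m^d\}\bigr|,
\]
and it therefore suffices to show that, on the almost sure event of general position, the cardinality inside the expectation equals the deterministic integer $\eta_{d,m}\,\eulerian{d+1}{m}$.

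Next I would identify the Radon partition combinatorially. Assumption~(GP), together with exchangeability, implies that a.s.\ the increments $\xi_1,\ldots,\xi_{d+1}\in\R^d$ admit a unique-up-to-nonzero-scalar linear dependence $\mu_1\xi_1+\cdots+\mu_{d+1}\xi_{d+1}=0$ whose coefficients $\mu_j$ are all nonzero and pairwise distinct. A short Abel-summation computation based on $S_i^\sigma = \sum_{j\le i}\xi_{\sigma(j)}$ then shows that the unique-up-to-scaling affine relation among $S_0^\sigma,\ldots,S_{d+1}^\sigma$ has coefficients proportional to $\lambda_i^\sigma := \mu_{\sigma(i)}-\mu_{\sigma(i+1)}$ for $i=0,\ldots,d+1$, with the convention $\mu_{\sigma(0)}=\mu_{\sigma(d+2)}:=0$. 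Because the combinatorial type depends only on the unordered pair of sizes of the two Radon classes (Theorem~\ref{theo:comb_types_polys_d+2_vert}), and the overall sign of the proportionality constant merely swaps those two classes, $\conv(S_0^\sigma,\ldots,S_{d+1}^\sigma)$ is of type $T_m^d$ if and only if the number $D(\sigma)$ of descents in the augmented sequence $(0,\mu_{\sigma(1)},\ldots,\mu_{\sigma(d+1)},0)$ lies in $\{m+1,\,d+1-m\}$.

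The combinatorial heart of the argument is then the claim that, for any distinct nonzero reals $a_1,\ldots,a_n$ and any $k\in\{1,\ldots,n\}$, the number of permutations $\sigma$ of $\{1,\ldots,n\}$ for which $(0,a_{\sigma(1)},\ldots,a_{\sigma(n)},0)$ has exactly $k$ descents equals the Eulerian number $\eulerian{n}{k-1}$. I would prove this by a cyclic-rotation trick: the linear descent count in question coincides with the cyclic descent count of the cyclic word $(0,a_{\sigma(1)},\ldots,a_{\sigma(n)})$ of $n+1$ distinct reals, and the cyclic descent count is invariant under cyclic rotation. Rotating so that the smallest of these $n+1$ values lies at the starting position, the step leaving the minimum is forced to be an ascent and the wrap-around back to it is forced to be a descent, whence the cyclic descent count equals $1$ plus the linear descent count of a uniform random permutation of the remaining $n$ distinct values, which follows the standard Eulerian distribution.

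Combining these ingredients with the palindromic symmetry $\eulerian{d+1}{k}=\eulerian{d+1}{d-k}$, the number of $\sigma$ with $D(\sigma)\in\{m+1,\,d+1-m\}$ equals $2\eulerian{d+1}{m}$ when $m\ne d/2$ and $\eulerian{d+1}{m}$ when $m=d/2$, i.e., exactly $\eta_{d,m}\,\eulerian{d+1}{m}$. Dividing by $(d+1)!$ then yields the theorem. The main obstacle I anticipate is the careful verification that (GP) together with (Ex) implies a.s.\ that the $\mu_j$'s are nonzero and pairwise distinct: any such coincidence would force some $\lambda_i^\sigma$ to vanish for a suitably chosen $\sigma$ and $i$, contradicting general affine position for the points $(S_0^\sigma,\ldots,S_{d+1}^\sigma)$ applied to every permuted walk; the bookkeeping linking the (GP) hypothesis on partial sums to genericity of the increments needs to be spelled out, but is essentially routine.
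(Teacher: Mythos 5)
Your proof is correct, but it takes a genuinely different route from the paper. The paper derives the result top--down: it quotes the expected $f$-vector of $\cQ_{d+1,d}$ from the random-walk literature (expressed through Stirling numbers of both kinds), feeds the complementary face numbers into the general inversion formula of Theorem~\ref{theo:sylvester_probab_types_trhough_expected_f_vectors}, and then identifies the resulting alternating Stirling sum with an Eulerian number via the Frobenius identity~\eqref{eq:eulerian_numbers_frobenius_0}. You instead argue bottom--up and almost entirely combinatorially: exchangeability reduces the probability to counting permutations; the unique linear relation $\sum_j\mu_j\xi_j=0$ among the increments transfers, by Abel summation, to the affine relation among $S_0^\sigma,\ldots,S_{d+1}^\sigma$ with coefficients $\mu_{\sigma(i)}-\mu_{\sigma(i+1)}$; the signs of these coefficients read off the Radon partition and hence the type; and the cyclic-rotation argument converts the descent count of the $0$-augmented word into the standard Eulerian statistic. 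I checked the key steps and they are sound: the Abel-summation computation is exactly right (including $\lambda_0=-c\,\mu_{\sigma(1)}$ and $\lambda_{d+1}=c\,\mu_{\sigma(d+1)}$ from the convention $\mu_{\sigma(0)}=\mu_{\sigma(d+2)}=0$), and the genericity bookkeeping you flag does go through, provided you use $(\text{Ex})$ and not just $(\text{GP})$: e.g.\ $\mu_1=0$ is equivalent to linear dependence of $\xi_2,\ldots,\xi_{d+1}$, which $(\text{GP})$ alone does not forbid but which has the same probability, by exchangeability of the increments, as linear dependence of $\xi_1,\ldots,\xi_d$, i.e.\ of $S_1,\ldots,S_d$; similarly $\mu_j=\mu_k$ is ruled out by choosing $\sigma$ with $\sigma(i)=j$, $\sigma(i+1)=k$ and applying $(\text{GP})$ to the permuted walk. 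What each approach buys: the paper's route is shorter given its general machinery and extends uniformly to the Gaussian, beta, and conic settings, whereas yours is self-contained, explains \emph{why} Eulerian numbers appear (as descent statistics of the Radon coefficients), and in fact yields the stronger conditional statement that, given the increments, exactly $\eta_{d,m}\eulerian{d+1}{m}$ of the $(d+1)!$ reorderings produce type $T_m^d$ --- which also makes transparent the connection to the Schmidt--Simion result quoted after Corollary~\ref{cor:probab_types_convex_hulls_RW_CLT}.
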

\begin{remark}
For properties of Eulerian numbers we refer to the books of~\citet[Section~6.2]{graham_knuth_patashnik_book}, \citet[Sections~6.3-6.7]{mezo_book}, \citet{petersen_book_eulerian_numbers} and the paper of~\citet{janson_euler_frobenius_rounding}. It easily follows from the definition that
$$
\sum_{k=0}^{n-1} \eulerian{n}{k} = n! \qquad  \text{ and }  \qquad \eulerian{n}{k} = \eulerian{n}{n-k-1}.
$$
These two properties and the definition of $\eta_{d,m}$ imply that
$$
\sum_{m=0}^{\lfloor d/2\rfloor} \eta_{d,m} \cdot \frac{\eulerian{d+1}{m}}{(d+1)!} = \frac 1 {(d+1)!} \sum_{m=0}^{d} \eulerian{d+1}{m} = 1,
$$
%$\sum_{m=0}^{\lfloor d/2\rfloor} p_{d,m}^{\text{convRW}} = 1$,
i.e.\ the probabilities of types indeed sum up to $1$.
%We shall also need the following classical formula expressing Eulerian numbers through Stirling numbers of the second kind, see~\cite[Eqn.~(6.40)]{graham_knuth_patashnik_book} or~\cite[p.~152]{mezo_book}:
%\begin{equation}\label{eq:eulerian_numbers_through_stirling_sec_type}
%\eulerian{n}{k} = \sum_{j=0}^{n-k} (-1)^{n-j+k} \binom{n-j}{k} \stirlingsec{n}{j} j!.
%\end{equation}
\end{remark}
\begin{proof}[Proof of Theorem~\ref{theo:sylvester_probab_types_conv_RW}]
If $(S_i)_{i=0}^n$ is a random walk in $\R^d$, $n\geq d$,  whose increments $\xi_1,\dots,\xi_n$ satisfy conditions $(\text{Ex})$ and $(\text{GP})$, then for all $\ell\in \{0,\ldots, d-1\}$, %the expected number of $k$-faces of the convex hull $\cQ_{n,d}$ is given by the formula
\begin{align}
\E [f_\ell(\cQ_{n,d})]
&=
\frac{2\cdot \ell!}{n!} \sum_{j=0}^{\infty}\stirling{n+1}{d-2j}  \stirlingsec{d-2j}{\ell+1},
\label{eq:E_F_k_C_n_main_theorem}
\\
\binom{n+1}{\ell+1} -  \E [f_\ell(\cQ_{n,d})]
&=
\frac{2\cdot \ell!}{n!} \sum_{j=1}^{\infty}\stirling{n+1}{d+2j}  \stirlingsec{d+2j}{\ell+1}.
\label{eq:E_F_k_C_n_main_theorem_complement}
\end{align}
Here, $\stirling{n}{k}$ and $\stirlingsec{n}{k}$ denote the Stirling numbers of the first and second kind, respectively; see, e.g., \cite[Section~6.1]{graham_knuth_patashnik_book}.
%Note that the total number of $k$-dimensional simplices formed by the vertices $S_0,\ldots, S_n$ is $\binom{n+1}{k+1}$, so the left-hand side counts the number of simplices which did not become faces of $\cQ_{n,d}$.
The first formula was derived in~\cite[Theorem~1.2]{KVZ17}, while the second one follows from the first one and the properties of Stirling numbers, see~\cite[Section~6.1, Eqn.~(6.3)]{kabluchko_marynych_lah_distr}. For $n=d+1$, \eqref{eq:E_F_k_C_n_main_theorem_complement} simplifies to
$$
\binom{d+2}{\ell+1} -  \E [f_\ell(\cQ_{d+1,d})]
=
\frac{2\cdot \ell!}{(d+1)!} \stirlingsec{d+2}{\ell+1},
\qquad
\ell\in \{0,\ldots, d-1\}.
$$
Plugging these values into Theorem~\ref{theo:sylvester_probab_types_trhough_expected_f_vectors} gives
\begin{align*}
p_{d,m}^{\text{convRW}}
&=
\frac 12 \eta_{d,m} \cdot \sum_{\ell=0}^{m} (-1)^{m+\ell} \binom{d-\ell+1}{d-m+1} \left(\binom{d+2}{\ell+1} - \E f_{\ell }(\cQ_{d+1,d})\right)
\\
&=
\eta_{d,m} \cdot \sum_{\ell=0}^{m} (-1)^{m+\ell} \binom{d-\ell+1}{d-m+1} \frac{\ell!}{(d+1)!}   \stirlingsec{d+2}{\ell+1}
=
\eta_{d,m} \cdot \frac{\eulerian{d+1}{m}}{(d+1)!},
\end{align*}
where in the last step we used the identity
\begin{equation}\label{eq:eulerian_numbers_identity}
\eulerian{n}{m} = \sum_{\ell=0}^{m} (-1)^{m+\ell} \binom{n-\ell}{n-m}  \ell! \stirlingsec{n+1}{\ell+1}
\end{equation}
whose proof will be given in Section~\ref{subsec:eulerian_numbers}, see~\eqref{eq:eulerian_numbers_frobenius_0}.
%%%%%Oroginal direct proof
%To prove it, we apply the formula recurrence relation $\stirlingsec{n+1}{\ell+1} = (\ell+1) \stirlingsec{n}{\ell+1} + \stirlingsec{n}{\ell}$ to get:
%\begin{align*}
%\text{RHS of \eqref{eq:eulerian_numbers_identity}}
%%&=
%%\sum_{\ell=0}^{m-1} (-1)^{m+\ell} \binom{n-\ell}{m-\ell} \ell! \stirlingsec{n+1}{\ell+1}
%&=
%\sum_{\ell=0}^{m} (-1)^{m+\ell} \binom{n-\ell}{m-\ell} \ell! \left((\ell+1) \stirlingsec{n}{\ell+1} +  \stirlingsec{n}{\ell}\right)\\
%&=
%\sum_{\ell=0}^{m} (-1)^{m+\ell} \binom{n-\ell}{m-\ell} (\ell+1)! \stirlingsec{n}{\ell+1}
%+
%\sum_{\ell=0}^{m} (-1)^{m+\ell} \binom{n-\ell}{m-\ell} \ell! \stirlingsec{n}{\ell}
%\\
%&=
%\sum_{\ell'=1}^{m+1} (-1)^{m+\ell'-1} \binom{n-\ell'+1}{m-\ell'+1} \ell'! \stirlingsec{n}{\ell'}
%+
%\sum_{\ell=1}^{m+1} (-1)^{m+\ell} \binom{n-\ell}{m-\ell} \ell! \stirlingsec{n}{\ell}
%\\
%&=
%\sum_{\ell=1}^{m+1} (-1)^{m+\ell+1} \left(\binom{n-\ell+1}{m-\ell+1} -  \binom{n-\ell}{m-\ell} \right) \ell! \stirlingsec{n}{\ell}
%\\
%&=
%\sum_{\ell=1}^{m+1} (-1)^{m+\ell+1} \binom{n-\ell}{m-\ell+1}\ell! \stirlingsec{n}{\ell}
%\\
%&=
%\sum_{\ell=1}^{m+1} (-1)^{m+\ell+1} \binom{n-\ell}{n-m-1}\ell! \stirlingsec{n}{\ell},
%\end{align*}
%where we used the index shift $\ell' = \ell+1$. Now, by~\eqref{eq:eulerian_numbers_through_stirling_sec_type}, the last line equals $\eulerian{n}{n-m-1} = \eulerian{n}{m}$, which completes the proof of~\eqref{eq:eulerian_numbers_identity}.
\end{proof}

Recently, a different proof of Theorem~\ref{theo:sylvester_probab_types_conv_RW}, using Gale duality,  has been given by Barysheva~\cite{Barysheva2025RandomConvexHulls}.

\begin{example}[Sylvester problem for random walks]
Taking $m=0$ in Theorem~\ref{theo:sylvester_probab_types_conv_RW} we recover a result of \cite{panzo_sylvester_random_walk}:
$$
p_{d,0}^{\text{convRW}}
=
\P[\cQ_{d+1,d} \text{ is a simplex}]
=
2 \frac{\eulerian{d+1}{0}}{(d+1)!} = \frac {2}{(d+1)!}.
$$
\end{example}
\begin{remark}
It is well-known (see, e.g., \cite{carlitz_etal_CLT_eulerian_numbers}, \cite{hwang_etal_asympt_eulerian_recurrences} or~\cite[Theorem~4.5]{gawronski_neuschel_euler_frobenius}) that the Eulerian numbers satisfy a central limit theorem of the following form: For all $t\in \R$,
$$
\lim_{n\to\infty} \frac{1}{n!} \sum_{k=0}^{m(n)} \eulerian{n}{k}  = \frac{1}{\sqrt {2\pi}} \int_{-\infty}^t \eee^{-s^2/2} \dint s,
\quad
\text{ where }
\quad
m(n) = \frac n2 + t \sqrt{\frac{n}{12}} + o(\sqrt n).
$$
As a consequence of Theorem~\ref{theo:sylvester_probab_types_conv_RW} and this CLT, one can easily deduce the following limit theorem for the type of $\cQ_{d+1,d}$ as $d\to\infty$.
\end{remark}
\begin{corollary}[A CLT for probabilities of types]\label{cor:probab_types_convex_hulls_RW_CLT}
For every $t>0$,
$$
\lim_{d\to\infty}\P\left[\cQ_{d+1,d} \text{ is of type } T_m^{d} \text{ with } m \geq \frac d2 - t \sqrt{\frac{d}{12}}\right] = \frac{2}{\sqrt{2\pi}} \int_{0}^t \eee^{-s^2/2} \dint s.
$$
\end{corollary}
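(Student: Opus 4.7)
The plan is to translate the sum defining the probability into a two-sided tail sum for Eulerian numbers and then invoke the central limit theorem cited in the preceding remark. By Theorem~\ref{theo:sylvester_probab_types_conv_RW},
$$
\P\!\left[\cQ_{d+1,d}\text{ is of type }T_m^d \text{ with } m\ge \tfrac d2 - t\sqrt{\tfrac{d}{12}}\right]
=
\sum_{m=\lceil d/2-t\sqrt{d/12}\rceil}^{\lfloor d/2\rfloor} \eta_{d,m}\,\frac{\eulerian{d+1}{m}}{(d+1)!}.
$$
First I would separate cases according to the parity of $d$. When $d$ is odd, $m\neq d/2$ for every integer $m$ in the range, so $\eta_{d,m}=2$ identically. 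Using the symmetry $\eulerian{d+1}{m}=\eulerian{d+1}{d-m}$ (equivalently $\eulerian{n}{k}=\eulerian{n}{n-k-1}$ with $n=d+1$), each term of the sum is equal to $(\eulerian{d+1}{m}+\eulerian{d+1}{d-m})/(d+1)!$, so the sum rewrites as $\sum_{k\in I_d} \eulerian{d+1}{k}/(d+1)!$, where
$$
I_d = \left\{k\in\Z:\ \left|k-\tfrac d2\right|\le t\sqrt{\tfrac{d}{12}}\right\}.
$$
When $d$ is even, the central index $m=d/2$ contributes $\eulerian{d+1}{d/2}/(d+1)!$ (with $\eta_{d,d/2}=1$) while all other indices carry $\eta_{d,m}=2$; pairing each $m<d/2$ with $d-m>d/2$ as above, the sum collapses again to $\sum_{k\in I_d} \eulerian{d+1}{k}/(d+1)!$.

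In either parity regime, it remains to take $d\to\infty$ in the symmetric tail sum $\sum_{k\in I_d} \eulerian{d+1}{k}/(d+1)!$. By the Eulerian CLT quoted just before the corollary (applied at the two levels $m(d+1)=\tfrac{d+1}{2}\pm t\sqrt{(d+1)/12}+o(\sqrt d)$, which differ from $\tfrac d2\pm t\sqrt{d/12}$ by $o(\sqrt d)$ and therefore define the same set $I_d$ up to indices whose total contribution is $o(1)$), the sum converges to
$$
\frac{1}{\sqrt{2\pi}}\int_{-t}^{t}\eee^{-s^2/2}\,\dint s
=\frac{2}{\sqrt{2\pi}}\int_{0}^{t}\eee^{-s^2/2}\,\dint s,
$$
as claimed.

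The only genuinely delicate point will be bookkeeping: verifying that the rounding in the bounds $\lceil d/2-t\sqrt{d/12}\rceil$ and $\lfloor d/2\rfloor$, together with the parity-dependent handling of $\eta_{d,m}$, produces exactly the symmetric set $I_d$ (up to $o(1)$ mass) needed to feed into the quoted CLT. Everything else is a direct substitution of Theorem~\ref{theo:sylvester_probab_types_conv_RW} followed by the symmetry relation $\eulerian{d+1}{m}=\eulerian{d+1}{d-m}$.
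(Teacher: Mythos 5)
Your proposal is correct and follows exactly the route the paper intends: the paper gives no explicit proof, merely noting that the corollary is an easy consequence of Theorem~\ref{theo:sylvester_probab_types_conv_RW} and the quoted Eulerian CLT, and your argument (substitute the type probabilities, fold the sum into a symmetric window around $d/2$ via $\eulerian{d+1}{m}=\eulerian{d+1}{d-m}$ with the $\eta_{d,m}$ factors handling the central term, then apply the CLT at levels $\pm t$) is precisely that deduction carried out in detail. The parity and rounding bookkeeping you flag does work out as you describe, so nothing is missing.
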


\begin{remark}
Schmidt and Simion~\cite{schmidt_simion_geom_comb_eulerian_num} solved an analogue of the Youden problem for the uniform distribution. In probabilistic terms, their result states that  if $U_1,\ldots, U_n$ are i.i.d.\ uniform on $[0,1]$ with order statistics $U_{1:n} < \ldots < U_{n:n}$ , then
$$
\P\left[U_{i:n} \leq \frac{0 + U_1 + \ldots + U_n + 1}{n+2} \leq U_{i+1:n}\right] = \frac{\eulerian{n+1}{i}}{(n+1)!},
\qquad
i\in \{0,\ldots, n\}.
$$
Here, we put $U_{0:n}=0$ and $U_{n+1:n} := 1$. We were not able to establish a direct connection between their result and Theorem~\ref{theo:sylvester_probab_types_conv_RW}.
\end{remark}

\section{Probabilities of types for cones or  spherical polytopes}
\subsection{Probabilities of types in terms of the expected face numbers: spherical version}
Let $X_1,\ldots, X_{d+2}$ be (possibly dependent) random vectors in $\R^{d+1}$ which are a.s.\ in general linear position.  This means that any $d+1$ vectors from this collection are linearly independent with probability $1$.  The polyhedral cone spanned by $X_1,\ldots, X_{d+2}$ is denoted by
$$
C := \pos(X_1,\ldots, X_{d+2}) = \{\lambda_1 X_1 + \ldots + \lambda_{d+2} X_{d+2}: \lambda_1 \geq 0, \ldots, \lambda_{d+2}\geq 0\}.
$$
The intersection of $C$  with the unit sphere, denoted by $Q: = C\cap \bS^d$, is a $d$-dimensional spherical polytope with at most $d+2$ vertices. Note that it is possible that $Q=\bS^d$, which is an important difference compared to what we have seen before. By the general linear position assumption, all proper faces of $Q$ are spherical simplices. Let $f_j(Q) = f_{j+1}(C)$ be the number of $j$-dimensional faces of $Q$, for all $j\in \{0,\ldots, d\}$. It is also natural to put
$$
f_{-1} (Q) := f_0(C) =  \ind_{\{C \neq \R^d\}} =  \ind_{\{Q \neq \bS^d\}}.
$$
Now, the possible combinatorial types of $Q$ are defined as follows: we say that $Q$ is of type $T_{m}^d$ with $m\in \{-1,0,\ldots, \lfloor d/2\rfloor\}$ if $f_{d-1}(Q) = (m+1)(d+1-m)$.
For example, in the degenerate case when $Q = \bS^d$ we say that $Q$ is of type $T_{-1}^d$. Further, if $Q$ is a spherical simplex, we say that $Q$ is of type $T_{0}^d$. Otherwise, $Q$ has $d+2$ vertices and can be assigned to one of the types $T_1^{d}, \ldots, T_{\lfloor d/2 \rfloor}^d$.

We are ready to state a spherical (or conic) version of Theorem~\ref{theo:sylvester_probab_types_trhough_expected_f_vectors}.
\begin{theorem}[Probabilities of types and expected $f$-vectors: spherical version]\label{theo:sylvester_probab_types_conical_trhough_expected_f_vectors}
Let $X_1,\ldots, X_{d+2}$ be random vectors in $\R^{d+1}$ which are in general linear position with probability $1$. Consider the spherical polytope $Q:= \pos (X_1,\ldots, X_{d+2}) \cap \bS^d$. Then, for all $m\in \{-1,0,\ldots, \lfloor d/2\rfloor\}$ we have
$$
q_{d,m} := \P[Q \text{ is of type } T_{m}^d]
=
\frac 12 \eta_{d,m} \cdot \sum_{\ell=-1}^{m} (-1)^{m+\ell} \binom{d-\ell+1}{d-m+1} \left(\binom{d+2}{\ell+1} - \E f_{\ell }(Q)\right),
$$
where $\eta_{d,m}$ is the same as in~\eqref{eq:sylvester_types_eta_d_m_def} above, that is
$$
\eta_{d,m}
=
1 + \ind_{\{m\neq d/2\}}
=
\begin{cases}
2,
&\text{ if } m\neq d/2,
\\
1, &\text{ if } m=d/2.
\end{cases}
$$
\end{theorem}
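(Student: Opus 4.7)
The plan is to run the same argument as in the proof of Theorem~\ref{theo:sylvester_probab_types_trhough_expected_f_vectors} but with the indexing shifted by one so as to incorporate the degenerate type $T_{-1}^d = \bS^d$ together with the boundary dimension $j=-1$ arising from the convention $f_{-1}(Q) = \ind_{\{Q \neq \bS^d\}}$.

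First I would verify that the face-count identity
\begin{equation*}
\binom{d+2}{j+1} - f_j(T_m^d) = \binom{d-m+1}{d-j+1} + \binom{m+1}{d-j+1}
\end{equation*}
coming from Theorem~\ref{theo:facets_polys_d+2_vert} extends uniformly to the enlarged index range $j, m \in \{-1, 0, \ldots, \lfloor d/2 \rfloor\}$. The case $m=-1$ collapses to $f_j(\bS^d)=0$ for all $j \leq d-1$, the case $j=-1$ with $m \geq 0$ returns $f_{-1}(T_m^d)=1$ (since the corresponding cone is a proper subset of $\R^{d+1}$), and the case $j=m=-1$ returns $0$; all of these agree with the definitions at the top of the section.

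Next, the total probability formula
\begin{equation*}
\binom{d+2}{j+1} - \E f_j(Q) = \sum_{m=-1}^{\lfloor d/2 \rfloor} q_{d,m}\!\left(\binom{d+2}{j+1} - f_j(T_m^d)\right), \quad j \in \{-1, 0, \ldots, \lfloor d/2 \rfloor\},
\end{equation*}
(which uses $\sum_m q_{d,m}=1$) yields a square linear system of size $\lfloor d/2\rfloor+2$. Exactly as in the original proof, each summand simplifies to $\binom{d-m+1}{j-m} + \ind_{\{j=m=d/2\}}$, which vanishes for $j<m$, so the coefficient matrix $A$ is lower triangular with all diagonal entries equal to $1$ except possibly $a_{d/2,d/2}=2$ in the even-dimensional case, and is therefore invertible.

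The final step is to verify that $A^{-1}$ has entries $b_{m\ell} = \tfrac{1}{2}\eta_{d,m}(-1)^{m+\ell}\binom{d-\ell+1}{m-\ell}$, which, after applying $\binom{d-\ell+1}{m-\ell} = \binom{d-\ell+1}{d-m+1}$, delivers the claimed formula since $B$ is also lower triangular and the sum over $\ell$ therefore truncates at $\ell=m$. Checking $AB=I$ reduces to the same binomial identity used in the proof of Theorem~\ref{theo:sylvester_probab_types_trhough_expected_f_vectors},
\begin{equation*}
\sum_{m=\ell}^{j} \binom{d-m+1}{j-m}(-1)^{m+\ell}\binom{d-\ell+1}{m-\ell} = \delta_{j\ell},
\end{equation*}
which is valid at the new endpoint $\ell=-1$ as well, since after the shift $i=m+1$ it becomes the standard $\sum_i(-1)^i\binom{d+2-i}{j+1-i}\binom{d+2}{i}=\ind_{\{j=-1\}}$ with $0 \leq j+1 \leq d+2$. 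The factor $\tfrac{1}{2}\eta_{d,m}$ together with the indicator correction combines in precisely the same fashion as before to handle the exceptional diagonal entry in the even-dimensional case. The whole argument thus poses no genuine obstacle beyond careful endpoint bookkeeping, with the only non-cosmetic point being the check that the uniform face-count formula correctly subsumes the degenerate type $T_{-1}^d=\bS^d$.
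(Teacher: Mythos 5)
Your proposal is correct and follows essentially the same route as the paper: extend the face-count identity of Theorem~\ref{theo:facets_polys_d+2_vert} to the index range including $-1$ via the conventions $f_j(T_{-1}^d)=0$ and $f_{-1}(T_m^d)=1$ for $m\geq 0$, set up the lower-triangular system of size $\lfloor d/2\rfloor+2$ using $\sum_m q_{d,m}=1$, and invert it with the same matrix $B$ and the same binomial identity (which, as you note, remains valid at the endpoint $\ell=-1$). The only cosmetic imprecision is the phrase ``the case $j=m=-1$ returns $0$'': the face count $f_{-1}(T_{-1}^d)$ is $0$, but the identity itself evaluates to $1=1$ there; this does not affect the argument.
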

\begin{proof}
Let us write  $q_m := q_{d,m}$ be the probability that $Q$ is of combinatorial type $T_m^d$, for $m\in \{-1,0,\ldots, \lfloor d/2\rfloor\}$.  By the formula of the total probability,
\begin{equation}\label{eq:probab_of_types_linear_eqs_conic}
\E f_j (Q) = \sum_{m = -1}^{\lfloor d/2 \rfloor} q_{m} f_j (T_m^d), \qquad j\in \{0, \ldots, d-1\}.
\end{equation}
Recall from Theorem~\ref{theo:facets_polys_d+2_vert} that for $m\in \{0,\ldots, \lfloor d/2\rfloor\}$,
\begin{equation}\label{eq:polytope_d+2_vert_f_vector_conic}
\binom{d+2}{j+1} - f_j (T_m^d) = \binom{d-m+1}{d-j+1} + \binom{m+1}{d-j+1},
\qquad
j\in \{0,\ldots, d-1\}.
\end{equation}
This continues to hold if $m=-1$ or $j=-1$ since $f_j(T_{-1}^d) = f_j(\bS^d) = 0$ for $-1\leq j\leq d-1$ and $f_{-1}(T_m^d) = 1$ for $m\geq 0$.
It follows that for all $j,m\in \{-1,0,\ldots, \lfloor d/2 \rfloor\}$,
$$
\binom{d+2}{j+1} - f_j (T_m^d) = \binom{d-m+1}{d-j+1} + \ind_{\{m = j = d/2\}} = \binom{d-m+1}{j-m} + \ind_{\{m = j = d/2\}}.
$$
Using $q_{-1} + q_0 + \ldots + q_{\lfloor d/2 \rfloor} = 1$ we rewrite~\eqref{eq:probab_of_types_linear_eqs_conic} in the form
\begin{align}
z_j
:&= \binom {d+2}{j+1} -  \E f_j(Q)
=
\sum_{m= -1}^{\lfloor d/2 \rfloor} \left(\binom {d+2}{j+1} - f_j(T_m^d)\right) q_{m}\notag
\\
&= \sum_{m= -1}^{j} \left(\binom{d-m+1}{j-m} + \ind_{\{m = j = d/2\}}\right) q_m,
\qquad
j\in \{-1, 0,\ldots, \lfloor d/2 \rfloor\}. \label{eq:probab_of_types_linear_eqs_complements_conic}
\end{align}

This is a system of  $\lfloor d/2\rfloor +2$ linear equations with the same number of unknowns.
Consider the vectors $z = (z_{-1}, z_0, \ldots, z_{\lfloor d/2 \rfloor})^\top$ and $q = (q_{-1}, q_0, \ldots, q_{\lfloor d/2 \rfloor})^\top$ and the lower-triangular matrix
$$
A := (a_{jm})_{j, m = -1,0, \ldots, \lfloor d/2 \rfloor} \text{ with }  a_{j m} = \binom{d-m+1}{j-m} + \ind_{\{j = m = d/2\}}.
$$
The system~\eqref{eq:probab_of_types_linear_eqs_complements_conic} takes the form $q = A q$, and its solution is $q = A^{-1} q$. The matrix $A$ is lower-triangular, that is $a_{jm} = 0$ for $m>j$. Its diagonal entries are given by $a_{jj} = 1 + \ind_{\{j = m = d/2\}}$.  Hence, the inverse matrix $A^{-1}$ indeed exists. We claim that it has the form
$$
B = (b_{m\ell})_{m, \ell = -1,0, \ldots, \lfloor d/2 \rfloor} \text{ with }  b_{m \ell} = (-1)^{m+\ell} \binom{d-\ell+1}{m-\ell} \cdot
\frac 12 \eta_{d,m}.
%\begin{cases}
%1/2 & \text{ if } m= d/2,\\
%1   & \text{otherwise}.
%\end{cases}.
$$
The matrix $B$ is also lower-triangular, that is $b_{m\ell} = 0$ for $\ell>m$, and its diagonal entries are equal to $1$ except the last one. Showing that $AB$ is the identity matrix amounts to proving the identity $\sum_{m=-1}^{j} a_{jm} b_{m\ell} = \delta_{j\ell}$ for all $-1\leq \ell \leq j \leq \lfloor d/2\rfloor$. If $d$ is even, the term $\ind_{\{j = m = d/2\}}$ multiplies the only non-zero element in the last column of $A$ by two, which is compensated by the factor $\frac 12 \eta_{d,m}$ in the definition of $B$ which multiplies the last row of $B$ by $1/2$, if $d$ is even. So, it remains to show that
$$
\sum_{m=\ell}^j \binom{d-m+1}{j-m} (-1)^{m+\ell} \binom{d-\ell+1}{m-\ell}  = \delta_{j\ell}
$$
for all $-1\leq \ell \leq j \leq \lfloor d/2\rfloor$. This has already been done in~\eqref{eq:sylvester_types_faces_identity_binom_coeff}.
%We need the following identity whose proof can be found on page 149 of~\cite{gruenbaum_book}:
%$$
%\sum_{i=0}^c (-1)^i \binom{a-i}{c-i}  \binom{b}{i} = \binom{a-b}{c}, \qquad 0\leq c \leq a.
%$$
%Taking $i = m-\ell$, $c= j-\ell$, $a= b= d - \ell + 1$ completes the proof.
\end{proof}

\subsection{Wendel--Donoho--Tanner random cones}\label{subsec:wendel_donoho_tanner}
Let $X_1,\ldots, X_n$ be (possibly dependent) random vectors in $\R^{d+1}$, where $n\geq d+1$. We impose the following assumptions on their joint distribution:
\begin{itemize}
\item[($\pm$)]
The joint law of $(X_1,\ldots, X_n)$ is \textit{even} meaning that for every vector of signs $(\eps_1,\ldots, \eps_n)\in \{+1,-1\}^n$ the joint law of $(\eps_1 X_1,\ldots, \eps_nX_n)$ is the same as the joint law of $(X_1,\ldots, X_n)$.
\item[(GP)] The vectors $X_1,\ldots, X_n$ are in general linear position with probability $1$ meaning that for every $1\leq j_1 < \ldots < j_{d+1} \leq n$, the vectors $X_{j_1},\ldots, X_{j_{d+1}}$ are linearly independent with probability $1$.
\end{itemize}
Consider the cone $\cD_{n,d+1}=\pos(X_1,\dots,X_n)$ which is called Wendel's or Donoho--Tanner cone in the literature.  Under the above assumptions, Wendel~\cite{wendel}  (see also~\cite[Theorem~8.2.1]{schneider_weil_book}) proved that
%\begin{theorem}[Wendel's formula]\label{theo:wendel}
%Let $X_1,\ldots, X_n$ be (possibly dependent) random vectors in $\R^d$ that satisfy Conditions~(a) and~(b).  Then,
\begin{align}
\P[\cD_{n,d+1} \neq \R^{d+1}] =  \frac 1  {2^{n-1}} \sum_{r=0}^{d} \binom{n-1}{r},
\qquad
\P[\cD_{n,d+1} = \R^{d+1}] = \frac 1  {2^{n-1}} \sum_{r=d+1}^{n-1} \binom{n-1}{r}.
\label{eq:wendel_formula_sylvester_types}
\end{align}
%\end{theorem}

\citet[Theorem~1.6]{donoho_tanner1} computed the expected face numbers:
\begin{equation}\label{eq:donoho_tanner_expected_face_number}
\binom {n}{j} - \E f_j(\cD_{n,d+1}) = \frac{1}{2^{n-j-1}}\binom{n}{j}\sum_{r=0}^{n-d-2}\binom{n-j-1}{r},
\qquad
j\in \{0,\dots, d\}.
\end{equation}
We shall be interested in the case $n=d+2$. The next result describes the distribution of the type of the spherical polytope $\cD_{d+2,d+1} \cap \bS^d$.
\begin{theorem}[Probabilities of types for Wendel cones]
Let $X_1,\ldots,X_{d+2}$ be random vectors in $\R^{d+1}$ satisfying Assumptions~($\pm$) and (GP). Then, for all $m\in \{-1, 0,1,\ldots, \lfloor d/2 \rfloor\}$,
$$
q_{d,m}^{\text{Wend}} := \P[\cD_{d+2,d+1} \cap \bS^d \text{ is of type } T_m^{d}]
=
\frac{\eta_{d,m}}{2^{d+2}} \binom{d+2}{m+1}.
$$
\end{theorem}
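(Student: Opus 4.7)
The plan is to combine the conic Theorem~\ref{theo:sylvester_probab_types_conical_trhough_expected_f_vectors} with Wendel's formula~\eqref{eq:wendel_formula_sylvester_types} and the Donoho--Tanner formula~\eqref{eq:donoho_tanner_expected_face_number}, both of which are valid under the assumptions $(\pm)$ and (GP). Setting $C:=\cD_{d+2,d+1}$ and $Q:=C\cap\bS^d$, we have $f_\ell(Q)=f_{\ell+1}(C)$ for $\ell\in\{0,\ldots,d\}$, while $f_{-1}(Q)=\ind_{\{Q\neq\bS^d\}}=\ind_{\{C\neq\R^{d+1}\}}$. Specializing~\eqref{eq:donoho_tanner_expected_face_number} at $n=d+2$ and $j=\ell+1$ collapses the inner sum (the upper limit is $0$) and gives, for $\ell\in\{0,\ldots,d-1\}$,
\begin{equation*}
\binom{d+2}{\ell+1}-\E f_\ell(Q)=\frac{1}{2^{d-\ell}}\binom{d+2}{\ell+1}.
\end{equation*}
On the other hand, Wendel's formula~\eqref{eq:wendel_formula_sylvester_types} with $n=d+2$ yields $\E f_{-1}(Q)=1-2^{-(d+1)}$, so that $\binom{d+2}{0}-\E f_{-1}(Q)=2^{-(d+1)}$, which matches the same closed form evaluated at $\ell=-1$. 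Thus the identity
\begin{equation*}
\binom{d+2}{\ell+1}-\E f_\ell(Q)=\frac{\binom{d+2}{\ell+1}}{2^{d-\ell}}
\end{equation*}
holds uniformly for all $\ell\in\{-1,0,\ldots,d-1\}$, which is the pleasant structural fact that makes the whole computation tractable.

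Next I would plug this into the conic Theorem~\ref{theo:sylvester_probab_types_conical_trhough_expected_f_vectors} to obtain
\begin{equation*}
q_{d,m}^{\text{Wend}}=\frac{\eta_{d,m}}{2}\sum_{\ell=-1}^{m}(-1)^{m+\ell}\binom{d+1-\ell}{d+1-m}\binom{d+2}{\ell+1}\cdot 2\,(1/2)^{d+1-\ell},
\end{equation*}
where I have rewritten $1/2^{d-\ell}=2\cdot(1/2)^{d+1-\ell}$ in order to apply Lemma~\ref{lem:probab_types_proof_identity_binomial_coeff} with $z=1/2$. The sum over $\ell\in\{0,\ldots,m\}$ is evaluated directly by that lemma and equals $\binom{d+2}{m+1}\bigl(z^{d+1-m}(1-z)^{m+1}+(-1)^m z^{d+2}\bigr)$. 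The extra $\ell=-1$ term evaluates, using $\binom{d+2}{d+1-m}=\binom{d+2}{m+1}$, to $-(-1)^m\binom{d+2}{m+1}z^{d+2}$, which precisely cancels the unwanted $(-1)^m z^{d+2}$ contribution from the lemma.

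After cancellation, only $\binom{d+2}{m+1}z^{d+1-m}(1-z)^{m+1}$ survives; at $z=1/2$ this is $\binom{d+2}{m+1}/2^{d+2}$, and the external factor $2$ from the rewriting combines with the $\eta_{d,m}/2$ in front to give the claimed $\eta_{d,m}\binom{d+2}{m+1}/2^{d+2}$. The one step I would double-check carefully is the boundary term at $\ell=-1$: this is where the spherical setting genuinely differs from the convex-hull setting of Theorem~\ref{theo:sylvester_probab_types_trhough_expected_f_vectors}, and the key miracle is that this extra term cancels exactly the asymmetric $(-1)^m z^{d+2}$ piece produced by Lemma~\ref{lem:probab_types_proof_identity_binomial_coeff}, leaving a clean product. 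As a consistency check, summing $\eta_{d,m}\binom{d+2}{m+1}/2^{d+2}$ over $m\in\{-1,0,\ldots,\lfloor d/2\rfloor\}$ should equal $1$, which follows from $\sum_{k=0}^{d+2}\binom{d+2}{k}=2^{d+2}$ together with the symmetry $\binom{d+2}{k}=\binom{d+2}{d+2-k}$ and the definition of $\eta_{d,m}$.
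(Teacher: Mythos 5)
Your proposal is correct and follows essentially the same route as the paper's analytic proof: specialize the Donoho--Tanner formula~\eqref{eq:donoho_tanner_expected_face_number} at $n=d+2$, $j=\ell+1$, plug the resulting $\binom{d+2}{\ell+1}2^{-(d-\ell)}$ into Theorem~\ref{theo:sylvester_probab_types_conical_trhough_expected_f_vectors}, and evaluate the alternating sum. The only (cosmetic) difference is in the last step, where you invoke Lemma~\ref{lem:probab_types_proof_identity_binomial_coeff} at $z=1/2$ and cancel the $(-1)^m z^{d+2}$ piece against the $\ell=-1$ boundary term, whereas the paper folds the $\ell=-1$ term into a single binomial expansion of $(2-1)^{m+1}$ --- the same computation in different clothing; the paper also supplies an independent geometric proof via the hyperplane dissection $B_{\eps_1,\ldots,\eps_{d+1}}$, which your argument does not touch.
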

\begin{proof}
Taking $n=d+2$ and $j=\ell+1$ in~\eqref{eq:donoho_tanner_expected_face_number}  gives
$$
\binom {d+2}{\ell+1} - \E f_{\ell}(\cD_{d+2,d+1}\cap \bS^d) = \frac{1}{2^{d-\ell}}\binom{d+2}{\ell+1},
\qquad
\ell\in \{-1,0, \dots, d-1\}.
$$
By Theorem~\ref{theo:sylvester_probab_types_conical_trhough_expected_f_vectors},
\begin{align*}
q_{d,m}^{\text{Wend}}
&=
\frac 12 \eta_{d,m} \cdot \sum_{\ell=-1}^{m} (-1)^{m+\ell} \binom{d-\ell+1}{d-m+1} \left(\binom{d+2}{\ell+1} - \E f_{\ell }(\cD_{d+2,d+1}\cap \bS^d)\right)
\\
&=
\frac 12 \eta_{d,m} \cdot \sum_{\ell=-1}^{m} (-1)^{m+\ell} \binom{d-\ell+1}{d-m+1} \frac{1}{2^{d-\ell}}\binom{d+2}{\ell+1}
\\
&=
\frac{\eta_{d,m}}{2^{d+2}} \binom{d+2}{m+1} \cdot \sum_{\ell=-1}^{m} \binom{m+1}{\ell+1} 2^{\ell+1} (-1)^{m-\ell}
\\
&=
\frac{\eta_{d,m}}{2^{d+2}} \binom{d+2}{m+1},
\end{align*}
where in the last step we used the binomial theorem.

Let us sketch a second, geometric proof of the theorem. We represent $\cD_{d+2,d+1}$ as the positive hull of the vector $X_{d+2}$ and the cone $B: = \pos (X_1,\ldots, X_{d+1})$. Note that $B$ is a full-dimensional cone in $\R^{d+1}$. Consider the linear hyperplanes
$$
H_i = \lin (X_1,\ldots, X_{i-1}, X_{i+1}, \ldots, X_{d+1}), \qquad i\in \{1,\ldots, d+1\}.
$$
Let $H_i^0$ and $H_i^1$ denote the closed half-spaces bounded by $H_i$ and let $B\subseteq H_i^0$.  The hyperplanes $H_1,\ldots, H_{d+1}$ dissect $\R^{d+1}$ into $2^{d+1}$ polyhedral cones of the form
$$
B_{\eps_1,\ldots, \eps_{d+1}} := H_1^{\eps_1} \cap \dots \cap H_{d+1}^{\eps_{d+1}},
\qquad
(\eps_1,\ldots, \eps_{d+1})\in \{0,1\}^{d+1}.
$$
Two of these cones are $B = B_{0,\ldots, 0}$ and $-B = B_{1,\ldots, 1}$. Now, since the events $X_{d+2} \in B_{\eps_1,\ldots, \eps_{d+1}}$ are equiprobable, disjoint, and one of these events occurs a.s., the probability of each such event is $1/2^{d+1}$.
Further,  it is possible to check that $\cD_{d+2,d+1}\cap \bS^d = \pos (B, \{X_{d+2}\}) \cap \bS^d$ is of type $T_{m}^d$ if and only if $X_{d+2}$ belongs to one of the cones $B_{\eps_1,\ldots, \eps_{d+1}}$ with $\eps_1 + \ldots  + \eps_{d+1} \in \{m, d-m\}$, for all $m\in \{-1,0,\ldots, \lfloor d/2\rfloor\}$, modulo zero events. (This is a conic version of the beneath/beyond characterization of $T_m^d$ in~\cite[p.~98]{gruenbaum_book}. For example, $\cD_{d+2,d+1}= \bS^d$ iff $X_{d+2}\in -B$ (case $m=-1$). Also,  $\cD_{d+2,d+1}\cap \bS^d$ is a spherical simplex iff $X_{d+2}\in B$ or $X_{d+2}$ belongs to one of the $d+1$ cones $B_{\eps_1,\ldots,\eps_{d+1}}$ with $\eps_1+\ldots+\eps_{d+1} = d$, which is the case $m=0$.)
The number of cones $B_{\eps_1,\ldots, \eps_{d+1}}$ with $\eps_1 + \ldots  + \eps_{d+1} = m$ (respectively, $d-m$) is $\binom{d+1}{m}$ (respectively, $\binom{d+1}{d-m} = \binom{d+1}{m+1}$).  Note that in the exceptional case when $m= d/2$ these collections of cones coincide, and the number of cones is $\binom{2m+1}{m} = \frac 12 \cdot \binom{2m+2}{m+1} = \frac 12 \cdot \binom{d+2}{m+1}$.   If $m\neq d/2$,  these two collections are disjoint and the total number of cones  is $\binom{d+1}{m} + \binom{d+1}{m+1} = \binom{d+2}{m+1}$. In both cases, the probability that $X_{d+2}$ is in any such cone is $1/2^{d+1}$ and we conclude that $\cD_{d+2,d+1} \cap \bS^d$ is of type $T_m^{d}$ with probability
$\frac{\eta_{d,m}}{2^{d+2}} \binom{d+2}{m+1}$.
%q_{d,m}^{\text{Wend}}
%=
%\P[\cD_{d+2,d+1} \cap \bS^d \text{ is of type } T_m^{d}]
%=
%\frac{\eta_{d,m}}{2^{d+2}} \binom{d+2}{m+1}.
%$$
\end{proof}

\begin{example}[Sylvester problem on a sphere]
The special cases $m=-1$ and $m=0$ allow for very simple proofs: Letting $B = \pos (X_1,\ldots, X_{d+1})$ we have $\P[X_{d+2} \in B] = \P[X_{d+2} \in -B] = 1/2^{d+1}$ for symmetry reasons (see the above proof) and hence
\begin{align*}
q_{d,-1}^{\text{Wend}}
&=
\P[\cD_{d+2,d+1}=\R^{d+1}]
=
\P[\pos(B, \{X_{d+2}\}) = \R^{d+1}] = \P[X_{d+2} \in -B] = \frac{1}{2^{d+1}},
\\
q_{d,0}^{\text{Wend}}
&=
\P[\cD_{d+2,d+1}\cap \bS^{d} \text{ is a spherical simplex}]
=
(d+2) \cdot  \P[X_{d+2} \in B] = \frac{d+2}{2^{d+1}}.
\end{align*}
The first formula is also an immediate consequence of Wendel's formula~\eqref{eq:wendel_formula_sylvester_types},
while the second one recovers a result of~\citet{maehara_martini_sylvester_sphere} (low-dimensional cases are treated in~\cite[Theorem~3.6]{maehara_martini_geometric_probab} and~\cite[Corollary~8.1]{maehara_martini_book}).
\end{example}

The central limit theorem for the binomial distribution $\Bin (d+2, \frac 12)$ yields the following
\begin{corollary}[A CLT for probabilities of types]
For every $t>0$,
$$
\lim_{d\to\infty}\P\left[\cD_{d+2,d} \text{ is of type } T_m^{d} \text{ with } m \geq \frac d2 - t \sqrt{\frac{d}{4}}\right] = \frac{2}{\sqrt{2\pi}} \int_{0}^t \eee^{-s^2/2} \dint s.
$$
\end{corollary}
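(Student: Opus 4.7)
The plan is to combine the explicit formula $q_{d,m}^{\text{Wend}} = \eta_{d,m}\binom{d+2}{m+1}/2^{d+2}$ from the preceding theorem with the classical de Moivre--Laplace central limit theorem for the binomial distribution $\Bin(d+2,\tfrac12)$. The key algebraic manipulation is to rewrite the cumulative sum of interest as a symmetric central binomial tail probability.

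Concretely, for $m \neq d/2$ one writes $\eta_{d,m}\binom{d+2}{m+1} = \binom{d+2}{m+1} + \binom{d+2}{d-m+1}$ using the symmetry of binomial coefficients, while for $m = d/2$ (only possible if $d$ is even) the factor $\eta_{d,m}=1$ leaves the single middle coefficient $\binom{d+2}{d/2+1}$. Summing over $m \in \{m_0, m_0+1, \ldots, \lfloor d/2\rfloor\}$ and reindexing, the two shifted binomial families fit together---the lone middle coefficient arising in the even case being exactly the term that bridges them---into the symmetric contiguous range $\{m_0+1, \ldots, d+1-m_0\}$. This yields
$$
\sum_{m=m_0}^{\lfloor d/2\rfloor} q_{d,m}^{\text{Wend}}
= \frac{1}{2^{d+2}}\sum_{k=m_0+1}^{d+1-m_0}\binom{d+2}{k}
= \P\!\left[\left|K - \tfrac{d+2}{2}\right| \leq \tfrac{d}{2} - m_0\right],
$$
where $K \sim \Bin(d+2,\tfrac12)$.

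Taking $m_0$ to be the smallest integer at least $d/2 - t\sqrt{d/4}$, one has $d/2 - m_0 = t\sqrt{d/4} + O(1)$. Since $K$ has mean $(d+2)/2$ and variance $(d+2)/4$, the de Moivre--Laplace theorem gives
$$
\P\!\left[\left|K - \tfrac{d+2}{2}\right| \leq t\sqrt{d/4} + O(1)\right]
\;\longrightarrow\; \P[|Z|\leq t]
= \frac{2}{\sqrt{2\pi}}\int_0^t \eee^{-s^2/2}\,\dint s
$$
as $d \to \infty$, where $Z \sim N(0,1)$; both the additive $O(1)$ and the mismatch between the standardizations $\sqrt{(d+2)/4}$ and $\sqrt{d/4}$ are negligible in the limit. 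The only genuine obstacle is the combinatorial bookkeeping in the middle step, where one must check that the two shifted index ranges $\{m_0+1,\ldots,\lfloor d/2\rfloor+1\}$ and $\{\lceil d/2\rceil+1,\ldots,d+1-m_0\}$ dovetail without gap or overlap, with the parity of $d$ handled consistently; once this is verified, the corollary is immediate from the classical CLT.
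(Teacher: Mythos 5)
Your proposal is correct and is exactly the argument the paper intends: the paper gives no written proof beyond invoking the central limit theorem for $\Bin(d+2,\tfrac12)$, and your computation — pairing $\binom{d+2}{m+1}$ with $\binom{d+2}{d+1-m}$ via $\eta_{d,m}$, checking that the two index ranges dovetail into the contiguous symmetric block $\{m_0+1,\ldots,d+1-m_0\}$ in both parities of $d$, and then applying de Moivre--Laplace with the harmless $O(1)$ and $\sqrt{(d+2)/4}$ versus $\sqrt{d/4}$ discrepancies — supplies precisely the omitted details. No gaps.
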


\subsection{Positive hulls of random walks and bridges}\label{subsec:walk_bridges_positive_hull_refined_sylvester}
To compute probabilities of types for positive hulls of random walks, we shall rely on the results of~\cite{godland_kabluchko_positive_hulls_rand_walks}, where the corresponding expected $f$-vectors have been determined. As it turns out, in the conical setting there are two natural models: symmetric random walks and random bridges, related to reflection groups of types $B$ and $A$, respectively.  We recall the definitions from~\cite[Section~1.2]{godland_kabluchko_positive_hulls_rand_walks}.

To define random walks, let $X_1,\ldots,X_n$ be (possibly dependent) random vectors in $\R^{d+1}$. Consider their partial sums $S_1,\ldots,S_n$ defined by $S_k:=X_1+\ldots+X_k$ for $k\in\{1,\ldots,n\}$.  We say that $S_1,\ldots,S_n$ is a \textit{(symmetric) random walk} in $\R^{d+1}$ starting at $S_0:=0$ if the following conditions are satisfied:
\begin{enumerate}
\item[($\pm$Ex)]\emph{Symmetric exchangeability}: For every permutation $\sigma= (\sigma(1),\ldots, \sigma(n))$ of $\{1,\ldots, n\}$ and every vector of signs $\eps=(\eps_1,\ldots,\eps_n)\in\{\pm 1\}^n$, we have that
\begin{align*}
(\eps_1X_{\sigma(1)},\ldots,\eps_nX_{\sigma(n)})\eqdistr (X_1,\ldots,X_n).
\end{align*}
\item[(GP)] \emph{General position}:  $n\ge d+1$ and, for every $1\le i_1<\ldots<i_{d+1}\le n$, the vectors $S_{i_1},\ldots,S_{i_{d+1}}$ are linearly independent with probability $1$.
\end{enumerate}
The positive hull of the symmetric random walk is then defined as
\begin{align*}
\cC_{n,d+1}^B:=\pos\{S_1,\ldots,S_n\}
\end{align*}

To define random bridges, let $\widetilde X_1,\ldots,\widetilde X_n$ be (possibly dependent) random vectors in $\R^{d+1}$ with partial sums $\widetilde S_k:= \widetilde X_1+ \ldots + \widetilde X_k$. We say that $\widetilde S_1,\ldots,\widetilde S_n$ is a \textit{random bridge} in $\R^{d+1}$ if the following conditions are satisfied:
\begin{enumerate}
\item[(Ex)]\emph{Exchangeability}: For every permutation $\sigma= (\sigma(1),\ldots, \sigma(n))$ of $\{1,\ldots, n\}$, we have that
\begin{align*}
(\widetilde X_{\sigma(1)},\ldots,\widetilde X_{\sigma(n)})\eqdistr (\widetilde X_1,\ldots,\widetilde X_n).
\end{align*}
\item[(Br)]  \emph{Bridge property}: With probability $1$, it holds that $\widetilde S_n=\widetilde X_1+\ldots+\widetilde X_n=0$.
\item[(GP')] \emph{General position}: $n\ge d+2$ and, for every $1\le i_1<\ldots<i_{d+1}\le n-1$, the vectors $\widetilde S_{i_1},\ldots,\widetilde S_{i_{d+1}}$ are linearly independent with probability $1$.
\end{enumerate}
The bridge starts and ends at $0=\widetilde S_0=\widetilde S_n$.
%A sufficient condition for the general position assumption (GP'), which is rather mild, will be stated  in Lemma~\ref{lemma:sufficent_condition}.
The positive hull of this random bridge is then defined as
\begin{align*}
\cC_{n,d+1}^A:=\pos\{\widetilde S_1,\ldots,\widetilde S_{n}\} = \pos\{\widetilde S_1,\ldots,\widetilde S_{n-1}\}.
\end{align*}
%\begin{align*}
%\cC_{n,d+1}^A:=\pos\{\widetilde S_1,\ldots,\widetilde S_{n}\}, \qquad \cC_{n,d+1}^B:=\pos\{S_1,\ldots,S_n\}.
%\end{align*}
%As we have already argued in Examples~\ref{ex:convex_hull_gauss_walk_absorption} and~\ref{ex:convex_hull_gauss_bridge_absorption}
We shall be interested in the case $n=d+2$ (for walks) and $n=d+3$ (for bridges), so that  the number of non-zero vectors generating the cone is always $d+2$.
\begin{theorem}[Probabilities of types for positive hulls of random walks and bridges]
\label{theo:sylvester_probab_types_conical_RW_bridges}
Under the above assumptions, for all $m\in \{-1,0,\ldots, \lfloor d/2 \rfloor\}$,
\begin{align*}
q_{d,m}^{\text{posBr}} := \P[\cC_{d+3,d+1}^A\cap \bS^d \text{ is of type } T_m^{d}]
&=
\eta_{d,m} \cdot \frac{\eulerian{d+3}{m+1}}{(d+3)!},
\\
q_{d,m}^{\text{posRW}}  := \P[\cC_{d+2,d+1}^B\cap \bS^d \text{ is of type } T_m^{d}]
&=
\eta_{d,m} \cdot \frac{\eulerianb{d+2}{m+1}}{2^{d+2}(d+2)!},
\end{align*}
where $\eulerianb{n}{k}$ are the $B$-analogues of the Eulerian numbers $\eulerian{n}{k}$; see Section~\ref{subsec:eulerian_numbers} below for more information and references.
\end{theorem}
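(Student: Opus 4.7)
The plan is to follow the template of the proof of Theorem~\ref{theo:sylvester_probab_types_conv_RW}: start from an explicit formula for the expected $f$-vector of the cone, invert via the spherical formula Theorem~\ref{theo:sylvester_probab_types_conical_trhough_expected_f_vectors}, and collapse the resulting alternating sum by a Frobenius-type combinatorial identity. I expect the argument to run entirely in parallel in the bridge and walk cases, with the walk case using the $B$-analogues of Eulerian and Stirling numbers throughout.

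I would begin by specializing the general formulas of~\cite{godland_kabluchko_positive_hulls_rand_walks} for $\E f_\ell(\cC_{n,d+1}^A\cap\bS^d)$ and $\E f_\ell(\cC_{n,d+1}^B\cap\bS^d)$ to the minimal parameter values $n=d+3$ (bridge) and $n=d+2$ (walk). Just as the sum in~\eqref{eq:E_F_k_C_n_main_theorem_complement} collapses for $n=d+1$ in the convex-hull setting, the internal Stirling sum of loc.\ cit.\ reduces to a single surviving term for these minimal $n$. This gives clean one-line expressions for the defects $\binom{d+2}{\ell+1}-\E f_\ell$ in terms of a single Stirling (respectively $B$-Stirling) number of the second kind with top index $d+3$ (respectively $d+2$), valid for $\ell\in\{-1,0,\ldots,d-1\}$; the $\ell=-1$ entry records the absorption probability $\P[Q=\bS^d]$ via $f_{-1}(Q)=\ind_{\{Q\neq\bS^d\}}$. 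Plugging these expressions into Theorem~\ref{theo:sylvester_probab_types_conical_trhough_expected_f_vectors} reduces the two claims to a pair of combinatorial identities of the form
\[
\sum_{\ell=-1}^{m}(-1)^{m+\ell}\binom{d-\ell+1}{d-m+1}\cdot\bigl[\text{elementary factor in }\ell\bigr]\cdot\stirlingsec{d+3}{\star} \;=\;\eulerian{d+3}{m+1},
\]
and its $B$-version with $\stirlingsecb{d+2}{\star}$ and $\eulerianb{d+2}{m+1}$ in place of the $A$-quantities.

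In the bridge case this identity is a close cousin of~\eqref{eq:eulerian_numbers_identity}: it can be proved by the same polynomial-inversion scheme, combining Worpitzky's identity $x^n=\sum_k\eulerian{n}{k}\binom{x+k}{n}$ with the surjection expansion $x^n=\sum_k k!\,\stirlingsec{n}{k}\binom{x}{k}$, using Vandermonde to re-expand $\binom{x+k}{n}$ in the $\binom{x}{j}$ basis, and then performing a binomial Möbius inversion, exactly as sketched in Section~\ref{subsec:eulerian_numbers} for~\eqref{eq:eulerian_numbers_identity}. In the walk case the analogous identity involves $\eulerianb{d+2}{m+1}$ and $\stirlingsecb{d+2}{\star}$, and should be derivable by the very same scheme, now using the $B$-type Worpitzky identity and the $B$-Stirling surjection expansion recorded in Section~\ref{subsec:eulerian_numbers}; the prefactor $2^{d+2}$ in the denominator of $q_{d,m}^{\text{posRW}}$ arises naturally from the normalization $\sum_k\eulerianb{n}{k}=2^n\,n!$.

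The main technical obstacle is the $B$-analogue of~\eqref{eq:eulerian_numbers_identity}, and in particular making it compatible with the conventions of $\eulerianb{\cdot}{\cdot}$ and $\stirlingsecb{\cdot}{\cdot}$ adopted in Section~\ref{subsec:eulerian_numbers}; a secondary difficulty is the bookkeeping at $\ell=-1$, where one has to verify that the contribution $\binom{d+2}{0}-\E f_{-1}(Q)$ fits into the same alternating sum as the remaining $\ell\ge 0$ terms (without an ad hoc separate treatment) and that the overall factor $\eta_{d,m}$ from Theorem~\ref{theo:sylvester_probab_types_conical_trhough_expected_f_vectors} passes through unchanged. As consistency checks, summing the probabilities over $m\in\{-1,\ldots,\lfloor d/2\rfloor\}$ yields $1$ by the standard symmetries $\eulerian{n}{k}=\eulerian{n}{n-1-k}$ and $\eulerianb{n}{k}=\eulerianb{n}{n-k}$ combined with the definition of $\eta_{d,m}$, and the case $m=-1$ reproduces the predicted absorption probabilities $\P[\cC_{d+3,d+1}^A=\R^{d+1}]=2/(d+3)!$ and $\P[\cC_{d+2,d+1}^B=\R^{d+1}]=1/(2^{d+1}(d+2)!)$, both of which can be verified directly on a simple example (e.g.\ Gaussian bridges for the former and i.i.d.\ symmetric random walks for the latter).
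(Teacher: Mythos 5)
Your proposal follows the paper's proof essentially verbatim: the same specialization of the Godland--Kabluchko expected $f$-vector formulas to the minimal parameters $n=d+3$ (bridges) and $n=d+2$ (walks), where the internal Stirling sums collapse to a single term, the same inversion via Theorem~\ref{theo:sylvester_probab_types_conical_trhough_expected_f_vectors} with the $\ell=-1$ term absorbed into the same alternating sum, and the same final collapse via Frobenius-type identities together with the symmetries $\eulerian{n}{k}=\eulerian{n}{n-k-1}$ and $\eulerianb{n}{k}=\eulerianb{n}{n-k}$. The only cosmetic difference is that the paper proves both Frobenius identities (including the $B$-analogue) uniformly through the Euler--Frobenius numbers $A_{n,k,\rho}$ and an exponential generating function computation with $r$-Stirling numbers, rather than via Worpitzky/Vandermonde/binomial inversion as you suggest; both routes are standard, and your consistency checks (normalization and the $m=-1$ absorption probabilities) are correct.
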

%The number $\eulerianb{n}{k}$ counts signed permutations of $[n]=\{1,\ldots, n\}$ with $k$ ascents. Here, a signed permutation is a tuple $(\eps_1 \sigma(1),\ldots, \eps_n\sigma(n))$, where $\eps_i \in \{-1,+1\}$ are signs and $(\sigma(1),\ldots, \sigma(n))$ is a permutation of $[n]$.
%The signed permutation has an ascent at position $i\in [n]$ if  and $\sum_{i=1}^{n}\ind$.

%\begin{proof}[Proof of Theorem~\ref{theo:sylvester_probab_types_conical_RW_bridges}]
\begin{proof}
Let $j\in\{0,\ldots,d\}$ be given. It was shown in~\cite[Corollary~2.12]{godland_kabluchko_positive_hulls_rand_walks} (where the random walks live in $\R^d$ rather than in $\R^{d+1}$) that
\begin{align*}
\E f_j(\cC_{n,d+1}^A)
&	= 2\cdot \frac{(j+1)!}{n!}\sum_{r=0}^\infty \stirling{n}{d+1-2r}\stirlingsec{d+1-2r}{j+1},\\
\E f_j(\cC_{n,d+1}^B)
&	= 2\cdot \frac{2^j j!}{2^{n} n!}\sum_{r=0}^\infty \stirlingb{n}{d-2r}\stirlingsecb{d-2r}{j}.
\end{align*}
Here, $\stirlingb{n}{k}$ and $\stirlingsecb{n}{k}$ denote the $B$-analogues of the Stirling numbers of the first and second kind; see, e.g., \cite{Schlafli_angle_sums,godland_kabluchko_positive_hulls_rand_walks}.
We now use the identities $\sum_{i\text{ even}} \stirling{n}{i} \stirlingsec{i}{m} = \sum_{i\text{ odd}} \stirling{n}{i} \stirlingsec{i}{m} = \frac 12 \frac{(n-1)!}{(m-1)!} \binom nm$ and $\sum_{i\text{ even}} \stirlingb{n}{i} \stirlingsecb{i}{m} = \sum_{i\text{ odd}} \stirlingb{n}{i} \stirlingsecb{i}{m} = \frac{1}{2} \frac{2^n n!}{2^m m!} \binom{n}{m}$, both valid for $n>m$; see, e.g., \cite[Corollary 3.13, Equation (2.23)]{Schlafli_angle_sums}.  These identities imply $\sum_{r=0}^\infty \stirling{n}{d+1-2r}\stirlingsec{d+1-2r}{j+1} = \frac 12 \frac{(n-1)!}{j!} \binom n{j+1} - \sum_{r=1}^\infty \stirling{n}{d+1+2r}\stirlingsec{d+1+2r}{j+1}$ and similarly in the $B$-case. It follows that
\begin{align*}
\binom{n-1}{j} - \E f_j(\cC_{n,d+1}^A)
&	= 2\cdot \frac{(j+1)!}{n!}\sum_{r=1}^\infty \stirling{n}{d+1+2r}\stirlingsec{d+1+2r}{j+1},\\
\binom nj - \E f_j(\cC_{n,d+1}^B)
&	= 2\cdot \frac{2^j j!}{2^{n} n!}\sum_{r=1}^\infty \stirlingb{n}{d+2r}\stirlingsecb{d+2r}{j}.
\end{align*}
Now we take $n=d+2$ (for walks) and $n=d+3$ (for bridges). Also we intersect the positive hulls with $\bS^d$ and put $j= \ell+1$  with $\ell \in \{-1,0,\ldots, d-1\}$. Then, these formulas simplify to
\begin{align}
\binom{d+2}{\ell+1} - \E f_\ell(\cC_{d+3,d+1}^A \cap \bS^d)
&	
=
2\cdot \frac{(\ell+2)!}{(d+3)!} \stirlingsec{d+3}{\ell+2},  \label{eq:sylvester_conic_bridge_proof_missing_f_vect}
\\
\binom {d+2}{\ell+1} - \E f_\ell(\cC_{d+2,d+1}^B\cap \bS^d)
&	= 2\cdot \frac{2^{\ell+1} (\ell+1)!}{2^{d+2} (d+2)!}\stirlingsecb{d+2}{\ell+1}.  \label{eq:sylvester_conic_walk_proof_missing_f_vect}
\end{align}
For bridges, plugging~\eqref{eq:sylvester_conic_bridge_proof_missing_f_vect} into Theorem~\ref{theo:sylvester_probab_types_conical_trhough_expected_f_vectors} gives
\begin{align*}
q_{d,m}^{\text{posBr}}
&=
\frac 12 \eta_{d,m} \cdot \sum_{\ell=-1}^{m} (-1)^{m+\ell} \binom{d-\ell+1}{d-m+1} \left(\binom{d+2}{\ell+1} - \E f_{\ell}(\cC_{d+3,d+1}^A \cap \bS^d)\right)
\\
&=
\eta_{d,m} \cdot \sum_{\ell=-1}^{m} (-1)^{m+\ell} \binom{d-\ell+1}{d-m+1} \frac{(\ell+2)!}{(d+3)!} \stirlingsec{d+3}{\ell+2}
\\
&=
\eta_{d,m} \cdot \frac{\eulerian{d+3}{d-m+1}}{(d+3)!}
=
\eta_{d,m} \cdot \frac{\eulerian{d+3}{m+1}}{(d+3)!},
\end{align*}
where we used the classical identity
$$
\eulerian {n}{k} = \eulerian {n}{n-k-1} =  \sum_{j=1}^{n-k} (-1)^{n-j-k} \binom{n-j}{k} j!\stirlingsec{n}{j};
$$
see Section~\ref{subsec:eulerian_numbers}, Equation~\eqref{eq:eulerian_numbers_frobenius_1}, for its proof.
For walks, plugging~\eqref{eq:sylvester_conic_walk_proof_missing_f_vect} into Theorem~\ref{theo:sylvester_probab_types_conical_trhough_expected_f_vectors} gives
\begin{align*}
q_{d,m}^{\text{posRW}}
&=
\frac 12 \eta_{d,m} \cdot \sum_{\ell=-1}^{m} (-1)^{m+\ell} \binom{d-\ell+1}{d-m+1} \left(\binom{d+2}{\ell+1} - \E f_\ell(\cC_{d+2,d+1}^B\cap \bS^d)\right)
\\
&=
\eta_{d,m} \cdot \sum_{\ell=-1}^{m} (-1)^{m+\ell} \binom{d-\ell+1}{d-m+1} \frac{2^{\ell+1} (\ell+1)!}{2^{d+2} (d+2)!}\stirlingsecb{d+2}{\ell+1}
\\
&=
\eta_{d,m} \cdot \frac{\eulerianb{d+2}{d-m+1}}{2^{d+2}(d+2)!} = \eta_{d,m} \cdot \frac{\eulerianb{d+2}{m+1}}{2^{d+2}(d+2)!},
\end{align*}
where in the last line we used the identity
$$
B\eulerian{n}{k} = B\eulerian{n}{n-k}  =  \sum_{j=0}^{n-k} (-1)^{n-j-k} \binom{n-j}{k} 2^j j!\stirlingsecb{n}{j},
$$
whose proof will be given in Section~\ref{subsec:eulerian_numbers}; see~\eqref{eq:eulerian_B_symmetry} and ~\eqref{eq:eulerian_numbers_frobenius_type_B}.
\end{proof}

\begin{remark}
The type $B$ Eulerian numbers satisfy a central limit theorem with the same centering and normalization as for the usual Eulerian numbers; see~\cite[Propositions~4.6, 4.7 and Theorem 6.2]{kahle_stump_counting_inversions_descents}. More precisely, for all $t\in \R$,
$$
\lim_{n\to\infty} \frac{1}{2^n n!} \sum_{k=0}^{m(n)} B\eulerian{n}{k}  = \frac{1}{\sqrt {2\pi}} \int_{-\infty}^t \eee^{-s^2/2} \dint s,
\quad
\text{ where }
\quad
m(n) = \frac n2 + t \sqrt{\frac{n}{12}} + o(\sqrt n).
$$
Theorem~\ref{theo:sylvester_probab_types_conical_RW_bridges} (together with Slutsky's lemma) implies that Corollary~\ref{cor:probab_types_convex_hulls_RW_CLT} holds with $\cQ_{d+1,d}$ replaced by $\cC_{d+3,d+1}^A$ or $\cC_{d+2,d+1}^B$.
\end{remark}

%As a consequence of Theorem~\ref{theo:sylvester_probab_types_conv_RW} and this CLT, one can easily deduce the following limit theorem for the type of $\cQ_{d+1,d}$ as $d\to\infty$.
%\end{remark}
%\begin{corollary}[A CLT for probabilities of types]
%For every $t>0$,
%$$
%\lim_{d\to\infty}\P\left[\cQ_{d+1,d} \text{ is of type } T_m^{d} \text{ with } m \geq \frac d2 - t \sqrt{\frac{d}{12}}\right] = \frac{2}{\sqrt{2\pi}} \int_{0}^t \eee^{-s^2/2} \dint s.
%$$
%\end{corollary}

\section{Auxiliary results}
\subsection{Identities for the normal distribution function}\label{subsec:Phi_identity_integral_proof}
In this section we prove the following identity which was needed in Section~\ref{subsec:sylvester_refined_gaussian}:
\begin{equation}\label{eq:int_gauss_density_gauss_distr_funct_complex_arg_repeat}
\int_{-\infty}^{+\infty} \eee^{-x^2/2} \Phi^{n}\left(\frac{\ii x}{\sqrt{n}}\right) \dint x = \int_{-\infty}^{+\infty} \eee^{-x^2/2} \Phi^{n}\left(-\frac{\ii x}{\sqrt{n}}\right) \dint x = 0,\qquad n\in\{2,3,\ldots\}.
\end{equation}
Here, $\Phi(z)$ is the standard normal distribution function; see~\eqref{eq:Phi_def}. We begin with a standard
\begin{lemma}\label{lem:integral_analytic_function_cauchy_thm}
Let $f(w)$ be a function which is continuous on the upper half-plane $\{\Im w \geq 0\}$ and analytic on $\{\Im w>0\}$. Suppose that $f(w) = O(|w|^{-2})$ as $|w|\to+\infty$, $\Im w \geq 0$. Then,
\begin{equation}\label{eq:lem:integral_analytic_function_cauchy_thm}
\int_{-\infty}^{+\infty} f(x) \dint x = 0.
\end{equation}
\end{lemma}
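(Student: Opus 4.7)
The plan is to apply Cauchy's integral theorem on a standard semicircular contour $\Gamma_R$ in the closed upper half-plane: the segment $[-R,R]$ on the real axis traversed left-to-right, followed by the upper semicircle $\gamma_R = \{R \eee^{\ii \theta} : \theta \in [0,\pi]\}$. Since $f$ is continuous on $\{\Im w \geq 0\}$ and holomorphic on the open upper half-plane, a standard version of Cauchy's theorem (for functions holomorphic in a domain and continuous up to the boundary) gives $\oint_{\Gamma_R} f(w)\,\dint w = 0$ for every $R > 0$.

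Next, I would estimate the semicircular contribution. The hypothesis $f(w) = O(|w|^{-2})$ as $|w| \to \infty$ with $\Im w \geq 0$ provides a constant $M > 0$ and $R_0 > 0$ such that $|f(w)| \leq M/|w|^2$ for all $w$ with $|w| \geq R_0$, $\Im w \geq 0$. Hence for $R \geq R_0$,
\begin{equation*}
\left| \int_{\gamma_R} f(w)\,\dint w \right| \leq \pi R \cdot \frac{M}{R^2} = \frac{\pi M}{R} \xrightarrow[R \to \infty]{} 0.
\end{equation*}
Combined with $\oint_{\Gamma_R} f(w)\,\dint w = 0$, this yields
\begin{equation*}
\int_{-R}^{R} f(x)\,\dint x = -\int_{\gamma_R} f(w)\,\dint w \xrightarrow[R \to \infty]{} 0.
\end{equation*}
Finally, the decay hypothesis also ensures that $\int_{-\infty}^{+\infty} f(x)\,\dint x$ converges absolutely (since $|f(x)| \leq M/x^2$ for $|x|$ large), so the limit of the symmetric truncated integrals coincides with the improper integral, giving the claim.

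The only conceptual subtlety, which is truly minor, is invoking Cauchy's theorem for a function that is merely continuous on the boundary rather than holomorphic in a neighborhood of the closed contour; this is a classical extension that can be justified by approximating $\Gamma_R$ by slightly perturbed contours $\Gamma_R^{(\eps)}$ pushed into the open upper half-plane and passing to the limit using the uniform continuity of $f$ on compact subsets of $\{\Im w \geq 0\}$. I expect no other obstacles.
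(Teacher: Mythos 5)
Your proof is correct and follows essentially the same route as the paper: both apply Cauchy's theorem to the region bounded by a segment of the real axis and an upper semicircle of radius $R$, bound the semicircular contribution by $\pi M/R$ using the $O(|w|^{-2})$ decay, and let $R\to\infty$ (the paper phrases this as deforming the path to $(-\infty,-R)\cup\gamma_R\cup(R,+\infty)$ and estimating all three pieces, which is the same computation). Your explicit remark about justifying Cauchy's theorem for a function merely continuous up to the boundary is a point the paper glosses over, and your handling of it is fine.
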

\begin{proof}
By Cauchy's theorem, we can replace the integration path in~\eqref{eq:lem:integral_analytic_function_cauchy_thm} by the following one: $(-\infty, -R)\cup \gamma_R \cup (+R,\infty)$, where $R>1$ is arbitrary and $\gamma_R$ is the clockwise oriented half-circle $\{R \eee^{- i t}: t\in [-\pi, 0]\}$ contained in the upper half-plane. Using that $|f(w)| \leq C |w|^{-2}$ for all $|w|\geq 1$, we find that
$$
\max\left\{\left|\int_{-\infty}^R f(x) \dint x\right|, \left|\int_{R}^{+\infty} f(x) \dint x\right|\right\} \leq  C \int_R^{+\infty} |x|^{-2} \dint x \leq C R^{-1}
$$
and
$$
\left|\int_{\gamma_R} f(x)\dint x \right| \leq \pi R \cdot C R^{-2} \leq \pi C R^{-1}.
$$
These estimates hold for all $R>1$.  Letting $R\to\infty$ completes the proof of~\eqref{eq:lem:integral_analytic_function_cauchy_thm}.
\end{proof}

The next identity for $\Phi(x)$ can be found in~\cite[Remark~1.4 and Proposition~1.5]{kabluchko_zaporozhets_absorption}. We give here an independent proof.
\begin{lemma}\label{lem:Phi_power_n_integral_0}
For all $n\in \{2,3,\ldots\}$ and all $\ell\in \{0,\dots, n-2\}$ we have
\begin{equation}\label{eq:Phi_integral_identity}
\int_{-\infty}^{+\infty} \left(\eee^{-x^2/2} \Phi\left(\ii x\right)\right)^n x^\ell \dint x = 0.
\end{equation}
\end{lemma}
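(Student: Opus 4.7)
The plan is to apply Lemma~\ref{lem:integral_analytic_function_cauchy_thm} to the entire function
\begin{equation*}
f(w) := \bigl(\eee^{-w^2/2}\Phi(\ii w)\bigr)^n w^\ell,
\end{equation*}
which is analytic on all of $\C$ because $\Phi$ is entire. Writing $h(w) := \eee^{-w^2/2}\Phi(\ii w)$, the decay hypothesis $f(w) = O(|w|^{-2})$ required by that lemma in the closed upper half-plane will follow at once from a uniform bound $h(w) = O(|w|^{-1})$, since then $|f(w)| = |h(w)|^n |w|^\ell = O(|w|^{\ell - n})$ and $\ell - n \leq -2$ by assumption. Once this bound on $h$ is secured, Lemma~\ref{lem:integral_analytic_function_cauchy_thm} immediately gives $\int_{-\infty}^{+\infty} f(x)\,\dint x = 0$, which is the claim of~\eqref{eq:Phi_integral_identity}.

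To establish $h(w) = O(|w|^{-1})$ uniformly in $\{\Im w \geq 0\}$, I would invoke the classical complex Mills-ratio asymptotic
\begin{equation*}
\Phi(-y) = \frac{\eee^{-y^2/2}}{y\sqrt{2\pi}}\bigl(1 + O(|y|^{-2})\bigr), \qquad |y|\to\infty, \quad |\arg y| \leq 3\pi/4 - \delta.
\end{equation*}
The substitution $y = -\ii w$ sends the closed upper half-plane $\{\Im w \geq 0\}$ into the closed right half-plane $\{\Re y \geq 0\}$, which sits well inside the sector of validity. Using $-y^2/2 = w^2/2$ and $1/y = \ii/w$, this yields
\begin{equation*}
\Phi(\ii w) = \frac{\ii\,\eee^{w^2/2}}{w\sqrt{2\pi}}\bigl(1+O(|w|^{-2})\bigr),
\qquad
h(w) = \frac{\ii}{w\sqrt{2\pi}} + O(|w|^{-3}),
\end{equation*}
uniformly as $|w|\to\infty$ with $\Im w \geq 0$; in particular $|h(w)| \leq C/|w|$ for $|w|$ large, which is what the decay estimate on $f$ requires.

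The main delicate point is the uniformity of these asymptotics across the rays $\arg w = \pi/4$ and $\arg w = 3\pi/4$, where $\Re(w^2)$ changes sign and a direct Laplace-type analysis of the defining integral $\int_0^w \eee^{t^2/2}\,\dint t$ behaves qualitatively differently on each side of the transition (one must keep track of the leading exponential contribution, which cancels against the naive tail of $\Phi$). Routing the analysis through Mills' ratio with $y = -\ii w$ sidesteps this Stokes-type issue entirely, since the image $\{\Re y \geq 0\}$ stays comfortably inside $\{|\arg y| < 3\pi/4\}$ and the asymptotic holds with no change of regime. Everything else is routine.
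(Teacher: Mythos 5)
Your proposal is correct and follows essentially the same route as the paper: both apply the contour-shift Lemma~\ref{lem:integral_analytic_function_cauchy_thm} to $f(w)=(\eee^{-w^2/2}\Phi(\ii w))^n w^\ell$ and verify the $O(|w|^{-2})$ decay from the classical sectorial asymptotic $\Phi(z)\sim -\eee^{-z^2/2}/(\sqrt{2\pi}\,z)$ for $|\arg z|>\pi/4+\eps$, which is exactly your Mills-ratio statement for $\Phi(-y)$ in $|\arg y|\le 3\pi/4-\delta$ after the substitution $z=-y$. The only difference is notational (you parametrize by $y=-\ii w$ landing in the right half-plane, the paper by $z=\ii x$ landing in $|\arg z|\ge\pi/2$), so there is nothing to change.
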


\begin{proof}%[Proof of Lemma~\ref{lem:Phi_power_n_integral_0}]
We shall apply Lemma~\ref{lem:integral_analytic_function_cauchy_thm} to the function $f(x) := (\eee^{-x^2/2} \Phi(\ii x))^n x^\ell$.  To this end, we need to verify that $f(w) = O(|w|^{-2})$ as $|w|\to+\infty$, $\Im w \geq 0$.  Fix $\eps>0$. It is known  that the function $\Phi(z)$ satisfies
$$
\Phi(z) \sim - \frac {1}{\sqrt{2\pi} z} \eee^{-z^2/2},
\quad
\text{ as $|z|\to \infty$  in the domain $|\arg z| >\frac \pi 4 + \eps$};
$$
see~\cite[Eq.~7.1.23 on p.~298]{abramowitz_stegun}, \cite[\S~7.12.1]{NIST:DLMF} or~\cite[Exercise~3.11 on pp.~95--96]{bleistein_handelsman_book}.
A full proof can be found, for example, in~\cite[Lemma 3.10]{kabluchko_klimovsky_GREM}.
%%%In Olver, p. 112: it is stated as exercise in which it is asked "what is the domain of validity"?
Writing $z= ix$ we obtain
\begin{equation}\label{eq:Phi_auxiliary_asymptotics}
f(x) = (\eee^{-x^2/2} \Phi(\ii x))^n x^\ell \sim  \left(\frac {\ii}{\sqrt{2\pi} x}\right)^n x^\ell = O(|x|^{-(n-\ell)}),  \quad  \text{ as $|x|\to\infty, \; \Im x \geq 0$}
\end{equation}
Since $n-\ell\geq 2$, we conclude that $f(x) = O(|x|^{-2})$ as $|x|\to\infty$, $\Im x \geq 0$.    It remains to apply Lemma~\ref{lem:integral_analytic_function_cauchy_thm}.
% to the function $f(x) := (\eee^{-x^2/2} \Phi(\ii x))^n x^\ell$.
\end{proof}

\begin{proof}[Proof of~\eqref{eq:int_gauss_density_gauss_distr_funct_complex_arg_repeat}]
Take $\ell = 0$ in Lemma~\ref{lem:Phi_power_n_integral_0} and make the substitution $w= x/\sqrt n$ or $w= -x/\sqrt n$.
\end{proof}
\begin{remark}
As an application of~\eqref{eq:int_gauss_density_gauss_distr_funct_complex_arg_repeat}  let us verify that the probabilities in Theorem~\ref{theo:sylvester_probab_types_gauss} sum up to $1$. Recalling the definition of $\eta_{d,m}$  from~\eqref{eq:sylvester_types_eta_d_m_def}, this amounts to showing that
%For a moment, let  $p_{d,m}^{\text{Gauss}}$ we defined as the integral on the right-hand side of~\eqref{eq:sylvester_refined_gaussian_formula}. We claim that
\begin{equation}\label{eq:sylvester_refined_gauss_probab_sum_up_to_1}
\sum_{m=0}^{d}  \binom{d+2}{m+1} \frac 1 {\sqrt{2\pi}} \int_{-\infty}^{+\infty} \eee^{-x^2/2} \Phi^{m+1}\left(\frac{\ii x}{\sqrt{d+2}}\right) \Phi^{d+1-m}\left(-\frac{\ii x}{\sqrt{d+2}}\right) \dint x = 1.
%\sum_{m=0}^{\lfloor d/2 \rfloor} p_{d,m}^{\text{Gauss}} = 1.
\end{equation}
\end{remark}
%Let us check that the probabilities indeed sum up to $1$.
\begin{proof}
By~\eqref{eq:int_gauss_density_gauss_distr_funct_complex_arg_repeat}, the left-hand side of~\eqref{eq:sylvester_refined_gauss_probab_sum_up_to_1} does not change if we add two more terms corresponding to $m=-1$ and $m=d+1$.  With this in mind,
\begin{align*}
%\sum_{m=0}^{\lfloor d/2 \rfloor} p_{d,m}^{\text{Gauss}}
\text{LHS \eqref{eq:sylvester_refined_gauss_probab_sum_up_to_1}}
&=
\sum_{m=-1}^{d+1}  \binom{d+2}{m+1} \frac 1 {\sqrt{2\pi}} \int_{-\infty}^{+\infty} \eee^{-x^2/2} \Phi^{m+1}\left(\frac{\ii x}{\sqrt{d+2}}\right) \Phi^{d+1-m}\left(-\frac{\ii x}{\sqrt{d+2}}\right) \dint x
\\
&=
\frac 1 {\sqrt{2\pi}} \int_{-\infty}^{+\infty} \eee^{-x^2/2} \left(\Phi\left(\frac{\ii x}{\sqrt{d+2}}\right)+ \Phi\left(-\frac{\ii x}{\sqrt{d+2}}\right)\right)^{d+2} \dint x
\\
&=
\frac 1 {\sqrt{2\pi}} \int_{-\infty}^{+\infty} \eee^{-x^2/2} \dint x = 1.
\end{align*}
Note that we used that $\Phi(z) + \Phi(-z) = 1$.
\end{proof}

\subsection{Eulerian numbers}\label{subsec:eulerian_numbers}
In this section, we collect the necessary background on the Eulerian numbers $\eulerian{n}{k}$ and their $B$-analogues $B\eulerian{n}{k}$.  It is convenient to view these two triangular arrays of numbers as special cases of the Euler--Frobenius numbers $A_{n,k,\rho}$, which depend on  an additional parameter $\rho \in \R$.
 For a review on these numbers we refer to the paper of Janson~\cite{janson_euler_frobenius_rounding} (see also~\cite{gawronski_neuschel_euler_frobenius}).  It is known~\cite{janson_euler_frobenius_rounding} that
\begin{equation}\label{eq:euler_frobenius_P_n_rho_def}
\sum_{j=0}^\infty (j+\rho)^n x^j = \frac{P_{n,\rho}(x)}{(1-x)^{n+1}}, \qquad |x|<1,
\end{equation}
where $P_{n,\rho}(x)$ is a polynomial of degree $n$ in $x$, for all $n\in \N_0$. The coefficients of this polynomial are called \emph{Euler-Frobenius numbers}:
$$
P_{n,\rho}(x) = \sum_{k=0}^n A_{n,k, \rho} x^k.
$$
For $\rho=1$ and $\rho = 0$, one recovers the Eulerian numbers: $\eulerian{n}{k}  = A_{n,k,1}= A_{n, k+1,0}$; see the books of~\citet{graham_knuth_patashnik_book}, \citet{mezo_book} and~\citet{petersen_book_eulerian_numbers} for their properties. The $B$-Eulerian numbers are related to the case $\rho=1/2$ and given by $B \eulerian{n}{k} = 2^n A_{n,k,1/2}$. These numbers count signed permutations of $\{1,\ldots, n\}$ with $k$ ascents (see, e.g.~\cite[Section~13.1]{petersen_book_eulerian_numbers} or~\cite[p.~278]{chow_gessel}) and appear in~\cite{bjoerner_brenti_book_comb_coxeter_groups,brenti_eulerian_coxeter_groups,chakerian_cube_slices,chow_gessel,dey_eulerian_CLT_Carlitz_for_types_B_D,gawronski_neuschel_euler_frobenius,macmahon_divisors_numbers,simion_convex_poly_enum}; see the paper of Janson~\cite{janson_euler_frobenius_rounding}, the book by Petersen~\cite{petersen_book_eulerian_numbers} and entry~A060187 in~\cite{sloane} for more information. It is known that $A_{n,k,\rho} = A_{n,n-k, 1-\rho}$; see~\cite[(A.15)]{janson_euler_frobenius_rounding}. Consequently,
\begin{equation}\label{eq:eulerian_B_symmetry}
B\eulerian{n}{k} = 2^n A_{n,k,1/2} = 2^n A_{n,n-k,1/2}  =  B\eulerian{n}{n-k},
\qquad
\eulerian{n}{k}   =  \eulerian{n}{n-k-1}.
\end{equation}

A classical identity due to Frobenius expresses $\eulerian{n}{k}$ through the Stirling numbers $\stirlingsec{n}{k}$. We need a generalization of this identity expressing $A_{n,k,\rho}$ through  the $r$-Stirling numbers $\stirlingsec{n}{k}_r$ (with $r=1-\rho$); see~\cite{nyul:2015} for an overview of various definitions and properties of these numbers. Here we shall only need the exponential generating function
\begin{equation}\label{eq:gen_fct_r-stirling2}
\sum_{n=k}^\infty \stirlingsec{n}{k}_r \frac{y^n}{n!} = \frac{\eee^{ry}}{k!} \left(\eee^y-1\right)^k,
\quad
k\in \N_0.
\end{equation}
The usual (partition) Stirling numbers and their $B$-analogues can be recovered via $\stirlingsec{n}{j} = \stirlingsec{n}{j}_0$ and $\stirlingsecb nj = 2^{n-j} \stirlingsec{n}{j}_{1/2}$.

\begin{lemma}[Generalized Frobenius identity]
For all $n\in \N_0$ and $\rho\in \R$ we have
\begin{equation}\label{eq:euler_frobenius_polynomials_through_stirlingsec}
P_{n,\rho}(x) = \sum_{j=0}^n  (x-1)^{n-j} \stirlingsec{n}{j}_{1-\rho} j!.
\end{equation}
Consequently, expanding $(x-1)^{n-j}$ and comparing the coefficients of $x^k$ we get
\begin{equation}\label{eq:euler_frobenius_A_n_k_rho_through_r_stirlingsec}
A_{n,k,\rho} = \sum_{j=0}^n (-1)^{n+j+k} \binom {n-j}{k} \stirlingsec{n}{j}_{1 - \rho} j!,
\qquad
k\in \{0,\ldots, n\}.
\end{equation}
\end{lemma}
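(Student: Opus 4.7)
The plan is to read off \eqref{eq:euler_frobenius_polynomials_through_stirlingsec} by matching the coefficients of the defining geometric-type series $\sum_{m\ge 0}(m+\rho)^n x^m$ with a partial-fraction expansion. Throughout, let $(u)_k := u(u-1)\cdots(u-k+1)$ denote the falling factorial.

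The central ingredient is the classical $r$-Stirling transformation
\begin{equation}\label{eq:plan_rStirling}
u^n = \sum_{k=0}^n \stirlingsec{n}{k}_r (u-r)_k, \qquad u\in \R,\; n\in \N_0,\; r\in \R.
\end{equation}
Both sides are polynomials in $u$ of degree $n$ (the leading coefficient on the right is $\stirlingsec{n}{n}_r = 1$), so it suffices to verify \eqref{eq:plan_rStirling} for $u = r+M$ with $M\in \N_0$. For such $u$, the exponential generating function~\eqref{eq:gen_fct_r-stirling2} together with the binomial theorem gives
$$
\sum_{k=0}^{\infty} (M)_k \cdot \frac{e^{ry}(e^y-1)^k}{k!} = e^{ry}\bigl(1+(e^y-1)\bigr)^M = e^{(r+M)y} = \sum_{n\ge 0}(r+M)^n\frac{y^n}{n!},
$$
and extracting the coefficient of $y^n/n!$ yields \eqref{eq:plan_rStirling}.

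Next I would apply \eqref{eq:plan_rStirling} with the substitutions $u \rightsquigarrow -m-\rho$ and $r \rightsquigarrow 1-\rho$, so that $u-r = -m-1$ and hence $(u-r)_k = (-1)^k k!\binom{m+k}{k}$. Multiplying through by $(-1)^n$ one obtains
$$
(m+\rho)^n = \sum_{k=0}^n (-1)^{n-k} k! \binom{m+k}{k}\stirlingsec{n}{k}_{1-\rho}.
$$
Multiplying by $x^m$, summing over $m\ge 0$, and invoking $\sum_{m\ge 0}\binom{m+k}{k}x^m = (1-x)^{-(k+1)}$ together with the definition~\eqref{eq:euler_frobenius_P_n_rho_def} of $P_{n,\rho}$ gives
$$
\frac{P_{n,\rho}(x)}{(1-x)^{n+1}} = \sum_{k=0}^n \frac{(-1)^{n-k} k!\,\stirlingsec{n}{k}_{1-\rho}}{(1-x)^{k+1}}.
$$
Clearing the denominator $(1-x)^{n+1}$ and absorbing signs via $(-1)^{n-k}(1-x)^{n-k} = (x-1)^{n-k}$ proves \eqref{eq:euler_frobenius_polynomials_through_stirlingsec}. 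The second formula \eqref{eq:euler_frobenius_A_n_k_rho_through_r_stirlingsec} then follows by expanding $(x-1)^{n-j} = \sum_k \binom{n-j}{k}(-1)^{n-j-k}x^k$ in \eqref{eq:euler_frobenius_polynomials_through_stirlingsec} and equating coefficients of $x^k$.

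The only subtle point is the choice of substitution into \eqref{eq:plan_rStirling}: taking $u=-m-\rho$ and $r=1-\rho$ is forced by the requirements that the Stirling subscript come out as $1-\rho$ and that the falling factorial $(u-r)_k$ convert, up to sign, into the polynomials $k!\binom{m+k}{k}$ dual to the geometric expansion of $(1-x)^{-(k+1)}$. Once the substitution is identified, all remaining steps are mechanical generating-function and sign bookkeeping, so I do not anticipate a serious obstacle.
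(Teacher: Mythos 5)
Your argument is correct, and it reaches the lemma by a genuinely different route than the paper. The paper works with the bivariate exponential generating function: it sums the defining relation over $n$ against $z^n/n!$ to get $\sum_{n\ge 0} \frac{P_{n,\rho}(x)}{(1-x)^{n+1}}\frac{z^n}{n!} = \frac{\eee^{\rho z}}{1-x\eee^{z}}$, rewrites the right-hand side as $\eee^{(\rho-1)z}\sum_{j\ge 0}(1-\eee^{-z})^{j}(1-x)^{-j-1}$, recognizes each term as the $r$-Stirling EGF~\eqref{eq:gen_fct_r-stirling2} evaluated at $y=-z$ with $r=1-\rho$, and compares coefficients of $z^n/n!$. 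You instead fix $n$, first establish the Newton-type change-of-basis identity $u^n=\sum_{k}\stirlingsec{n}{k}_{r}\,(u-r)(u-r-1)\cdots(u-r-k+1)$ (itself verified from the same EGF at the integer points $u=r+M$ and extended by polynomiality), then substitute $u=-m-\rho$, $r=1-\rho$ and resum over $m$ using $\sum_{m\ge 0}\binom{m+k}{k}x^m=(1-x)^{-k-1}$. Both proofs ultimately rest on~\eqref{eq:gen_fct_r-stirling2}, but yours factors the computation through a clean, reusable finite identity (the $r$-Stirling analogue of $u^n=\sum_k\stirlingsec{n}{k}(u)_k$) and a Worpitzky-style summation, avoiding the extra generating variable $z$ entirely; the paper's version is shorter once one is willing to manipulate the bivariate series, but your intermediate lemma makes the combinatorial content of the substitution $r=1-\rho$ more transparent. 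All sign and index bookkeeping in your write-up checks out, including $(u-r)_k=(-1)^k k!\binom{m+k}{k}$ for $u-r=-m-1$ and the final extraction of the coefficient of $x^k$, which reproduces~\eqref{eq:euler_frobenius_A_n_k_rho_through_r_stirlingsec} exactly.
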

\begin{proof}
It would be possible to deduce this formula from~\cite[Theorem~2.1]{srivastava_eta_euler_frobenius}, but we find it easier to give an independent proof.   A consequence of~\eqref{eq:euler_frobenius_P_n_rho_def} is  the exponential generating function
$$
\sum_{n=0}^\infty \frac{P_{n,\rho}(x)}{(1-x)^{n+1}} \frac{z^n}{n!} = \sum_{j=0}^\infty \sum_{n=0}^\infty \frac{z^n}{n!}(j+\rho)^n x^j = \sum_{j=0}^\infty \eee^{(j+\rho) z} x^j = \frac{\eee^{\rho z}}{1 - x \eee^z}.
$$
Expanding the geometric series and using~\eqref{eq:gen_fct_r-stirling2} with $r= 1-\rho$ and $y= -z$ we get
\begin{align*}
\frac{\eee^{\rho z}}{1 - x \eee^z}
&=
\frac{\eee^{(\rho-1) z}}{1-x} \cdot \frac{1}{1- \frac{1-\eee^{-z}}{1-x}}
=
\eee^{(\rho-1) z} \sum_{j=0}^\infty \frac{(1-\eee^{-z})^j}{(1-x)^{j+1}}
\\
&=
\sum_{j=0}^\infty \frac {(-1)^j j! } {(1-x)^{j+1}} \frac{\eee^{(1-\rho)(-z)} (\eee^{-z}-1)^j}{j!}
=
\sum_{j=0}^\infty \frac {(-1)^j j!} {(1-x)^{j+1}}  \sum_{n=j}^\infty \stirlingsec{n}{j}_{1-\rho} \frac{(-z)^n}{n!}.
\end{align*}
Comparing the coefficients of $z^n/n!$ gives the claimed formula~\eqref{eq:euler_frobenius_polynomials_through_stirlingsec}.
\end{proof}

Taking $\rho = 1, 0, 1/2$ in~\eqref{eq:euler_frobenius_A_n_k_rho_through_r_stirlingsec} and using the formulas $\stirlingsec{n}{j}_0 = \stirlingsec{n}{j}$, $\stirlingsec{n}{j}_1 =\stirlingsec{n+1}{j+1}$ and $\stirlingsecb nj = 2^{n-j} \stirlingsec{n}{j}_{1/2}$, we obtain three formulas we relied on:
\begin{align}
\eulerian{n}{k}
&=
A_{n,k,1} =  \sum_{j=0}^n (-1)^{n+j+k} \binom {n-j}{k} \stirlingsec{n}{j} j!,
\label{eq:eulerian_numbers_frobenius_1}
\\
\eulerian{n}{k}
&=
A_{n,k+1,0}
= \sum_{j=0}^n (-1)^{n+j+k+1} \binom {n-j}{k+1} \stirlingsec{n+1}{j+1} j!,
\label{eq:eulerian_numbers_frobenius_0}
\\
B\eulerian{n}{k}
&=
2^n A_{n,k,1/2} =  \sum_{j=0}^n (-1)^{n+j+k} \binom {n-j}{k} B\stirlingsec{n}{j} 2^j j!.
\label{eq:eulerian_numbers_frobenius_type_B}
\end{align}
%We shall also need the following classical formula expressing Eulerian numbers through Stirling numbers of the second kind,
The first formula is classical; see~\cite[Eqn.~(6.40)]{graham_knuth_patashnik_book} or~\cite[p.~152]{mezo_book} and  expresses the fact that the Eulerian numbers  are the entries of the $h$-vector of the type $A_{n-1}$ Coxeter complex (or the dual of a permutohedron); see~\cite[Theorems~5.3 and~11.3]{petersen_book_eulerian_numbers} or~\cite[p. 163]{simion_convex_poly_enum}. The last formula is a similar claim for the type $B_n$ Coxeter complex.
%\begin{equation}\label{eq:eulerian_numbers_through_stirling_sec_type}
%\eulerian{n}{k} = \sum_{j=0}^{n-k} (-1)^{n-j+k} \binom{n-j}{k} \stirlingsec{n}{j} j!.
%\end{equation}

%These formulas express the fact that the Eulerian numbers (respectively, their $B$-analogues) are the entries of the $h$-vector of the dual of a permutohedron (respectively, $B$-permutohedron); see~\cite[p. 163]{simion_convex_poly_enum}.   %Kyle Petersen also explains this:  The eulerian numbers form an h-vector of the Coxeter complex - the simplicial complex obtained as an intersection of a reflection arrangement with the unit sphere.

\section*{Acknowledgement}
ZK has been supported by the German Research Foundation under Germany's Excellence Strategy  EXC 2044/2 -- 390685587, \textit{Mathematics M\"unster: Dynamics - Geometry - Structure} and by the DFG priority program SPP 2265 \textit{Random Geometric Systems}.

\section*{Data Availability}
No datasets were generated or analyzed during the current study.

%\bibliography{sylvester_probab_of_types_bib}
%\bibliographystyle{plainnat}

\end{document}